\documentclass[11pt]{amsart}

\usepackage{amssymb,bbold,mathtools}
\usepackage{color}
\usepackage{mathrsfs}
\usepackage{mathtools}
\usepackage{hyperref}
\usepackage{enumitem}
\usepackage{cleveref}
\usepackage[charter]{mathdesign}

\allowdisplaybreaks


\theoremstyle{plain}
\newtheorem{theorem}{Theorem}[section]
\newtheorem{proposition}[theorem]{Proposition}
\newtheorem{corollary}[theorem]{Corollary}
\newtheorem{lemma}[theorem]{Lemma}

\theoremstyle{definition}
\newtheorem{remark}[theorem]{Remark}
\newtheorem{example}[theorem]{Example}
\newtheorem{definition}[theorem]{Definition}


\newcommand{\abs}[1]{\lvert#1\rvert}
\newcommand{\norm}[1]{\lVert#1\rVert}
\newcommand{\uppars}[1]{\textup{(}#1\textup{)}}

\newcommand{\pos}[1]{{#1}^+}

\newcommand{\oc}{\mathrm{oc}}
\newcommand{\odual}[1]{{#1}^{\thicksim}}
\newcommand{\ocdual}[1]{{#1}^\thicksim_{\oc}}

\newcommand{\Ell}{\mathrm{L}}
\newcommand{\dc}{\mathrm{d}}

\renewcommand{\o}{\mathrm{o}}
\newcommand{\uo}{\mathrm{uo}}
\newcommand{\unb}{\mathrm{u}}

\newcommand{\conv}{\xrightarrow}
\newcommand{\convwithoverset}[1]{\conv{#1}}
\newcommand{\oconv}{\convwithoverset{\o}}
\newcommand{\uoconv}{\convwithoverset{\uo}}
\newcommand{\tauconv}{\convwithoverset{\tau}}

\newcommand{\sumet}{solidly submetrisable}
\newcommand{\locsumet}{locally solidly submetrisable}
\newcommand{\locsucommet}{locally interval complete solidly submetrisable}

\newcommand{\uoadhtext}{uo-adherence}

\newcommand{\suoadhtext}{$\sigma$-uo-adherence}

\newcommand{\adh}[2]{a_{#1}(#2)}
\newcommand{\sadh}[2]{a_{\sigma\text{-}#1}(#2)}

\newcommand{\uoadh}[1]{\adh{\uo}{#1}}

\newcommand{\suoadh}[1]{\sadh{\uo}{#1}}

\newcommand{\uoclos}[1]{\overline{#1}^{\uo}}
\newcommand{\suoclos}[1]{\overline{#1}^{\sigma\textup{-}\uo}}

\setlist[enumerate,1]{label=\textup{(\arabic*)},ref=\textup{(\arabic*)}}
\setlist[enumerate,2]{label=\textup{(\alph*)},ref=\textup{(\alph*)}}
\setlist[enumerate,3]{label=\textup{(\roman*)},ref=\textup{(\roman*)}}
\setlist[enumerate,4]{label=\textup{(\Alph*)},ref=\textup{(\Alph*)}}

\crefname{theorem}{Theorem}{Theorems}
\crefname{proposition}{Proposition}{Propositions}
\crefname{lemma}{Lemma}{Lemmas}
\crefname{corollary}{Corollary}{Corollaries}
\crefname{definition}{Definition}{Definitions}
\crefname{example}{Example}{Examples}
\crefname{remark}{Remark}{Remarks}
\crefname{section}{Section}{Sections}
\crefname{subsection}{Section}{Sections}
\crefname{subsubsection}{Section}{Sections}
\crefname{equation}{equation}{equations}




\begin{document}
\title[Embedded unbounded order convergent sequences]{Embedded unbounded order convergent sequences in topologically convergent nets in vector lattices}

\author{Yang Deng}
\address{Yang Deng; School of Economic Mathematics, Southwestern University of Finance and Economics, Chengdu, Sichuan, 611130 PR China}
\email{dengyang@swufe.edu.cn}

\author{Marcel de Jeu}
\address{Marcel de Jeu; Mathematical Institute, Leiden University, P.O.\ Box 9512, 2300 RA Leiden, the Netherlands;
	and Department of Mathematics and Applied Mathematics, University of Pretoria, Cor\-ner of Lynnwood Road and Roper Street, Hatfield 0083, Pretoria, South Africa}
\email{mdejeu@math.leidenuniv.nl}

\keywords{Locally solid vector lattice, Banach lattice, submetrisable topology, unbounded order convergence, topologically convergent net, embedded sequence}
\subjclass[2020]{Primary: 46A40. Secondary: 46A16, 46A19}


\begin{abstract}
We show that, for a class of locally solid topologies on vector lattices, a topologically convergent net has an embedded sequence that is unbounded order convergent to the same limit. Our  result implies, and often improves, many of the known results in this vein in the literature. A study of metrisability and submetrisability of locally solid topologies on vector lattices is included.
\end{abstract}

\maketitle

\section{Introduction and overview}\label{sec:introduction_and_overview}

\noindent
Let $(X,\Sigma,\mu)$ be a measure space, and let $\Ell^0(X,\Sigma,\mu)$ denote the vector lattice of  measurable functions on~$X$, with identification of two functions that are $\mu$-almost everywhere equal. It is a classical result that a sequence in~$\Ell^0(X,\Sigma,\mu)$ that is (globally) convergent in measure has a subsequence that is convergent almost everywhere to the same limit; see \cite[Theorem~19.4]{aliprantis_burkinshaw_PRINCIPLES_OF_REAL_ANALYSIS_THIRD_EDITION:1998} or \cite[Theorem~2.30]{folland_REAL_ANALYSIS_SECOND_EDITION:1999}, for example. Another result in a similar vein, but then for nets, is the following. If~$\mu$ is $\sigma$-finite, and if $(f_\alpha)_{\alpha\in A}$ is a net in $\Ell^0(X,\Sigma,\mu)$ that is (locally) convergent in measure, then there exist indices $\alpha_1\leq\alpha_2\leq\alpha_3\leq\dotsc$ such that the sequence $(f_{\alpha_n})_{n\geq 1}$ converges almost everywhere to the same limit; see \cite[Theorem~7.11]{deng_de_jeu:2022a}. In this case, there is the following way to rephrase the statement: if $(f_\alpha)_{\alpha\in A}$ is a net in $\Ell^0(X,\Sigma,\mu)$ that is convergent in the uo-Lebesgue topology of $\Ell^0(X,\Sigma,\mu)$, then there exists indices $\alpha_1\leq\alpha_2\leq\alpha_3\leq\dotsc$ such that the sequence $(f_{\alpha_n})_{n\geq 1}$ is unbounded order convergent to the same limit. There are quite a few other results in the literature, stating that a net (resp.\ a sequence), which is convergent in a particular locally solid topology on a vector lattice, has such an `embedded sequence' (resp.\ a subsequence) that is unbounded order convergent to the same limit.\footnote{See \cref{def:embedded_sequence} for a precise definition of embedded sequences.} In this paper, we establish a result\footnote{See \cref{res:subsequence}.} which asserts precisely this, but then for a reasonably large class of locally solid topologies all at once. It has many of the known results in this vein as special cases, often in an improved form.

\medskip

\noindent
This paper is organised as follows.

\cref{sec:preliminaries} contains the necessary definitions, notations, conventions, and preliminary results. We elaborate a little on the countable sup property, since this is often a sufficient condition for the applications in the final \cref{sec:applications}.

\cref{sec:metrisable_and_submetrisable_topologies_on_vector_lattices} contains a study of metrisability and submetrisability of locally solid topologies on vector lattices. The topologies to which our key result in \cref{sec:embedded_unbounded_order_convergent_sequences_in_convergent_nets}  applies are what we call the locally interval complete solidly submetrisable topologies,\footnote{See \cref{def:l.c.m.t.}.} and we take some effort to show that topologies of practical interest fall in this category. This is the case, for example, for Fatou topologies on a Dedekind complete vector lattice that have full carriers (which is automatic in the presence of the countable sup property). Due to further properties of Fatou topologies and unbounded order convergence, our result then effectively applies to Fatou topologies with full carriers on arbitrary vector lattices, in particular to Fatou topologies on vector lattices with the countable sup property.\footnote{See \cref{res:subsequence_fatou}.}

\cref{sec:embedded_unbounded_order_convergent_sequences_in_convergent_nets} contains our core result regarding the existence of embedded unbounded order convergent sequences.\footnote{See \cref{res:subsequence}.}

In the final \cref{sec:applications}, we combine the material on locally solid topologies from \cref{sec:metrisable_and_submetrisable_topologies_on_vector_lattices} with the basic theorem from \cref{sec:embedded_unbounded_order_convergent_sequences_in_convergent_nets}, yielding several results  on the existence of embedded unbounded order convergent sequences in topologically convergent nets. For a uo-Lebesgue topology, such an embedded unbounded order convergent sequence is again convergent in this topology, resulting in stronger statements in this case. We also give applications to weak closures and to the equality of adherences and closures related to various convergence structures. During \cref{sec:applications}, we point out how some of our theorems specialise to (improved versions of) known results.

\section{Preliminaries}\label{sec:preliminaries}

\noindent
In this section, we collect the necessary definitions, notations, conventions, and preliminary results.

All vector spaces are over the real numbers.  A linear topology on a vector space is supposed to be Hausdorff. Neighbourhoods of points need not be open.
We recall that a topological vector space is metrisable if and only if zero has a countable neighbourhood basis; see \cite[Theorem~I.6.1]{schaefer_wolff_TOPOLOGICAL_VECTOR_SPACES_SECOND_EDITION:1999}, for example.

\subsection{Vector lattices}\label{subsec:vector_lattices}

 All vector lattices are supposed to be Archimedean. For a vector lattice~$E$, we let $\odual{E}$ denote its order dual, and $\ocdual{E}$ its order continuous dual.

A net $(x_\alpha)_{\alpha\in A}$ in a vector lattice~$E$ is said to be \emph{order convergent in~$E$ to $x\in E$} (denoted by $x_\alpha \oconv x$) if there exists a net $(y_\beta)_{\beta\in B}$ in~$E$ such that $y_\beta\downarrow 0$ and with the property that, for every $\beta_0\in B$, there exists an $\alpha_0\in A$ such that $\abs{x_\alpha - x}\leq y_{\beta_0}$ for all $\alpha\in A$ with  $\alpha\geq \alpha_0$. A net $(x_\alpha)_{\alpha\in A}$ in~$E$ is said to be \emph{unbounded order convergent} (or \emph{uo-convergent} for short) \emph{in~$E$ to $x\in E$} (denoted by $x_\alpha \uoconv x$) if $\abs{x_\alpha - x}\wedge \abs{y}\oconv 0$ in~$E$ for all $y\in E$. Order convergence implies unbounded order convergence to the same limit.

A subset~$S$ of a vector lattice~$E$ is \emph{order closed} if $x\in S$ whenever a net $(x_\alpha)_{\alpha\in A}$ in~$S$ and $x\in E$ are such that $x_\alpha\oconv x$.

Let~$F$ be a vector sublattice of a vector lattice~$E$. We recall that~$F$ is said to be \emph{order dense in~$E$} if, for every $x>0$ in~$E$, there exists a $y\in F$ with $0<y\leq x$; \emph{majorising in~$E$} if, for every $x\in E$, there exists a $y\in F$ with $x\leq y$; and \emph{a regular vector sublattice of~$E$} if $x_\alpha\oconv x$ in~$E$ whenever a net $(x_\alpha)_{\alpha\in A}$ in~$F$ and $x\in F$ are such that $x_\alpha\oconv x$ in~$F$. Ideals and order dense vector sublattices are regular vector sublattices; see \cite[p.653]{gao_troitsky_xanthos:2017}, for example. Furthermore, if~$F$ is a regular vector sublattice of~$E$, $(x_\alpha)_{\alpha\in A}$ is a net in~$F$, and $x\in F$, then $x_\alpha\uoconv x$ in~$F$ if and only if $x_\alpha\uoconv x$ in~$E$. This property even characterises the regular vector sublattices among the vector sublattices; see \cite[Theorem~3.2]{gao_troitsky_xanthos:2017}.

A non-empty subset~$A$ of a vector lattice~$E$ is said to be an \emph{order basis} of~$E$ when the band generated by~$A$ equals~$E$, i.e., when $A^\dc=\left\{0\right\}$; cf.\ \cite[Definition~28.1]{luxemburg_zaanen_RIESZ_SPACES_VOLUME_I:1971}. If~$F$ is an order dense vector sublattice of~$E$, then it is easy to see that an order basis of~$F$ is also an order basis of~$E$.

Let~$E$ be a vector lattice. For $x\in E$, the ideal and the band in~$E$ that are generated by~$x$ are denoted by~$I_x$ and~$B_x$, respectively; the ideal and the band in the Dedekind completion~$E^\delta$ of~$E$ that are generated by~$x$ are denoted by~$I_x^\delta$ and~$B_x^\delta$, respectively. Using the fact that the Dedekind completion of a vector lattice~$E$ is the unique (up to isomorphism) Dedekind complete vector lattice that contains~$E$ as an order dense and majorising vector sublattice, it is easy to see that~$I_x^\delta$ is the Dedekind completion of~$I_x$.

\subsection{The countable sup property}\label{subsec:countable_sup_property}

A vector lattice~$E$ has the \emph{countable sup property} when every subset of~$E$ that has a supremum in~$E$ contains an at most countable subset with the same supremum. In some sources, such as  \cite{luxemburg_zaanen_RIESZ_SPACES_VOLUME_I:1971} and \cite{zaanen_INTRODUCTION_TO_OPERATOR_THEORY_IN_RIESZ_SPACES:1997},~$E$ is then said to be \emph{order separable}. Since it will become apparent in \cref{sec:applications} that having the countable sup property is a condition that is sufficient for several of the results in that section to hold, we use the opportunity to argue that, in practice, quite a few vector lattices have this property. If~$E$ and~$F$ are vector lattices such that there exists a strictly positive linear operator $T\colon E\to F$, then~$E$ has the countable sup property when~$F$ has; see \cite[Theorem~1.45]{aliprantis_burkinshaw_LOCALLY_SOLID_RIESZ_SPACES_WITH_APPLICATIONS_TO_ECONOMICS_SECOND_EDITION:2003}. In particular, if~$E$ has a strictly positive linear functional, then it has the countable sup property. Hence every separable Banach lattice has the countable sup property, as a consequence of  \cite[Exercise~4.1.4]{aliprantis_burkinshaw_POSITIVE_OPERATORS_SPRINGER_REPRINT:2006}. It is easy to see from \cite[Theorem~23.2.(iii)]{luxemburg_zaanen_RIESZ_SPACES_VOLUME_I:1971} that a Banach lattice with an order continuous norm also has the countable sup property. For a measure space $(X,\Sigma,\mu)$ where~$\mu$ is semi-finite, $\Ell^0(X,\Sigma,\mu)$ has the countable sup property if and only if~$\mu$ is $\sigma$-finite; see \cite[Proposition~6.5]{kandic_taylor:2018}. When~$E$ has the countable sup property, then so does every vector sublattice of~$E$; this follows from \cite[Theorem~23.5]{luxemburg_zaanen_RIESZ_SPACES_VOLUME_I:1971}. There is a conditional converse: suppose that~$F$ is a vector sublattice of a vector lattice~$E$, and that~$F$ has the countable sup property. If~$F$ has a countable order basis, and if~$F$ is order dense in~$E$, then~$E$ has the countable sup property. Indeed, by \cite[Theorem~6.2]{kandic_taylor:2018}, the universal completion~$F^{\mathrm u}$ of~$F$ has the countable sub property. Since~$F^{\mathrm u}$ has~$E$ as a vector sublattice by \cite[Theorem~7.23]{aliprantis_burkinshaw_LOCALLY_SOLID_RIESZ_SPACES_WITH_APPLICATIONS_TO_ECONOMICS_SECOND_EDITION:2003},~$E$ also has the countable sup property.

\subsection{Locally solid topologies on vector lattices}\label{subsec:locally_solid_topologies_on_vector_lattices}

A \emph{locally solid} topology on a vector lattice~$E$ is a linear topology on~$E$ such that zero has a neighbourhood basis consisting of solid subsets of~$E$. In this case, $(E,\tau)$ is called a \emph{locally solid vector lattice}. An \emph{o-Lebesgue topology on~$E$} is a locally solid topology~$\tau$ on~$E$ with the property that $x_\alpha\tauconv x$ whenever a net $(x_\alpha)_{\alpha\in A}$ in~$E$ and~$x\in E$ are such that $x_\alpha\oconv x$.\footnote{In the literature, what we call an o-Lebesgue topology is simply called a Lebesgue topology. Now that uo-Lebesgue topologies,  with a completely analogous definition, have become objects of a more extensive study, it seems consistent to also add a prefix to the original term.} A \emph{uo-Lebesgue topology on~$E$} is a locally solid topology~$\tau$ on~$E$ with the property that $x_\alpha\tauconv x$ whenever a net $(x_\alpha)_{\alpha\in A}$ in~$E$ and~$x\in E$ are such that $x_\alpha\uoconv x$. A \emph{Fatou topology on~$E$} is a locally solid topology on~$E$ such that zero has a neighbourhood basis consisting of order closed solid subsets of~$E$. A uo-Lebesgue topology is an o-Lebesgue topology, and an o-Lebesgue topology is a Fatou topology (see \cite[Lemma~4.2]{aliprantis_burkinshaw_LOCALLY_SOLID_RIESZ_SPACES_WITH_APPLICATIONS_TO_ECONOMICS_SECOND_EDITION:2003}).  A vector lattice admits at most one (Hausdorff) uo-Lebesgue topology; see \cite{conradie:2005} or \cite[Theorem~5.5]{taylor:2019}.

Suppose that~$E$ is a vector lattice, and that~$\tau_F$ is a locally solid topology on an order dense ideal~$F$ in~$E$. Take a solid $\tau_F$-neighbourhood of zero in~$F$ and~$y$ in~$F$, and define the solid subset $U_{V,y}$ of~$E$ by setting
\[
U_{V,y}\coloneqq\{x\in E:\abs{x}\wedge \abs{y}\in V\}.
\]
As~$V$ runs through a $\tau_F$-neighbourhood basis at zero that consists of solid subsets of~$F$, and as~$y$ runs over~$F$, the $U_{V,y}$ form a neighbourhood basis at zero for a locally solid topology on~$E$. This topology is called the \emph{unbounded topology on~$E$ that is generated by~$\tau_F$}, and it is denoted by~$\unb_F\tau_F$. We refer to \cite[Theorem~3.1]{deng_de_jeu:2022a} for details on this construction, which refines ideas in \cite{taylor:2019} that are also already implicit in \cite{conradie:2005}. When~$F=E$, where~$E$ has a locally solid topology~$\tau$, we simply write~$\unb\tau$ for $\unb_E\tau$. For a Banach lattice~$E$ with its norm topology~$\tau$, $\unb\tau$ was introduced in \cite{deng_o_brien_troitsky:2017}. The unbounded norm topology $\unb\tau$ on~$E$ is then usually referred to as the un-topology. For a Banach lattice~$F$ with its norm topology~$\tau_F$ which is an order dense ideal in a vector lattice~$E$, $\unb_F\tau_F$ was introduced in \cite{kandic_li_troitsky:2018}. The unbounded topology $\unb_F\tau_F$ on~$E$ is then usually referred to as the un-$F$-topology.

Let $(E,\tau)$ be a locally solid vector lattice, and let $(V_n)_{n\geq 1}$ be a sequence of $\tau$-neighbourhoods of zero. We say that the sequence is \emph{normal} if
 \[
 V_{n+1}+V_{n+1}\subseteq V_n
 \]
 for all~$n$, and we let~$\mathcal{N}$ denote the collection of all normal sequences consisting of solid $\tau$-neighbourhoods of zero. The \emph{carrier of~$\tau$},  denoted by~$C_\tau$, is defined by setting
\[
C_\tau\coloneqq\bigcup \left\{N^\dc: \text{there exists }(V_n)_{n\geq 1} \in \mathcal{N} \text{ such that }N=\bigcap_{n\geq 1} V_n\right\}.
\]
Clearly, if~$\tau$ is metrisable, then~$C_\tau=E$. We shall need the following result in the proof of \cref{res:subsequence_uo-lebesgue_induced}.

\begin{lemma}\label{res:carrier_of_induced_unbounded_topology}
Let~$E$ be a vector lattice, and let~$F$ be an order dense ideal in~$E$. Suppose that~$\tau_F$ is a metrisable locally solid topology on~$F$. Then $F\subseteq C_{u_F\tau_F}$.
 \end{lemma}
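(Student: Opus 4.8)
The plan is to prove the inclusion pointwise: I would show that every $y\in F$ belongs to the carrier $C_{\unb_F\tau_F}$ by producing, for each such $y$, an explicit normal sequence of solid $\unb_F\tau_F$-neighbourhoods of zero whose intersection $N$ satisfies $y\in N^{\dc}$. To prepare the construction, I would first exploit metrisability of $\tau_F$. Since $\tau_F$ is a metrisable locally solid topology on~$F$, zero has a countable neighbourhood basis consisting of solid sets, and a routine induction using continuity of addition together with local solidity refines this into a \emph{normal} sequence $(V_n)_{n\geq 1}$ of solid $\tau_F$-neighbourhoods of zero that is still a neighbourhood basis at zero. Because $\tau_F$ is Hausdorff, this basis has trivial intersection, $\bigcap_{n\geq 1}V_n=\{0\}$.

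Next, fixing $y\in F$, I would set $W_n\coloneqq U_{V_n,y}=\{x\in E:\abs{x}\wedge\abs{y}\in V_n\}$. By the very definition of the unbounded topology, each $W_n$ is a solid $\unb_F\tau_F$-neighbourhood of zero in~$E$, so the only thing to check is that $(W_n)_{n\geq 1}$ is normal, i.e.\ $W_{n+1}+W_{n+1}\subseteq W_n$. This is the one step that requires a small computation, and it rests on the elementary Riesz space inequality $(u+v)\wedge w\leq u\wedge w+v\wedge w$ for $u,v,w\geq 0$: if $x_1,x_2\in W_{n+1}$, then
\[
\abs{x_1+x_2}\wedge\abs{y}\leq(\abs{x_1}+\abs{x_2})\wedge\abs{y}\leq \abs{x_1}\wedge\abs{y}+\abs{x_2}\wedge\abs{y}\in V_{n+1}+V_{n+1}\subseteq V_n,
\]
and the solidity of~$V_n$ then forces $\abs{x_1+x_2}\wedge\abs{y}\in V_n$, that is, $x_1+x_2\in W_n$. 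Hence $(W_n)_{n\geq 1}$ is a normal sequence of solid $\unb_F\tau_F$-neighbourhoods of zero, so it lies in the relevant collection~$\mathcal{N}$ for the locally solid vector lattice $(E,\unb_F\tau_F)$.

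Finally, I would identify the intersection: $x\in\bigcap_{n\geq 1}W_n$ if and only if $\abs{x}\wedge\abs{y}\in\bigcap_{n\geq 1}V_n=\{0\}$, so $N\coloneqq\bigcap_{n\geq 1}W_n=\{y\}^{\dc}$. Consequently $N^{\dc}=\{y\}^{\dc\dc}$ contains~$y$ (since $A\subseteq A^{\dc\dc}$ always), and therefore $y\in N^{\dc}\subseteq C_{\unb_F\tau_F}$. As $y\in F$ was arbitrary, this gives $F\subseteq C_{\unb_F\tau_F}$. I expect the only genuinely delicate point to be the normality verification, and even that reduces to the lattice inequality displayed above; the hypothesis that~$F$ is an order dense ideal in~$E$ enters only indirectly, namely to guarantee that $\unb_F\tau_F$ is a well-defined locally solid topology on~$E$ with the sets $U_{V_n,y}$ among its basic neighbourhoods of zero, so that the carrier and the collection~$\mathcal{N}$ are meaningful.
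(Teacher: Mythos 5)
Your proposal is correct and follows essentially the same route as the paper's proof: choose a normal sequence $(V_n)_{n\geq 1}$ of solid $\tau_F$-neighbourhoods of zero with trivial intersection, pass to the sets $U_{V_n,y}$, verify normality, and identify $\bigcap_{n\geq 1}U_{V_n,y}=\{y\}^\dc$ so that $y\in\{y\}^{\dc\dc}\subseteq C_{\unb_F\tau_F}$. The only difference is that you spell out the normality check (via $(u+v)\wedge w\leq u\wedge w+v\wedge w$ and solidity of~$V_n$) that the paper leaves as "easily seen".
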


\begin{proof}
Choose a normal sequence $(V_n)_{n\geq 1}$ of solid $\tau_F$-neighbourhoods of zero in~$F$ such that $\bigcap_{n\geq 1}V_n=\{0\}$.
For $y\in F$, the sequence $(U_{V_n,y})_{n\geq 1}$ is a sequence of solid  $\unb_F\tau_F$-neighbourhoods of zero. It is easily seen that this is a normal sequence, and also that  $\bigcap_{n\geq 1} U_{V_n,y}=\{y\}^\dc$, where the disjoint complement is taken in~$E$. Hence $y\in C_{\unb_F\tau_F}$.
\end{proof}

\subsection{Embedded sequences in nets}

The main theme of this paper is the existence of unbounded order convergent sequences that are embedded in topologically convergent nets in the sense of the following definition.

\begin{definition}\label{def:embedded_sequence}
	Let~$S$ be a non-empty set, let~$A$ be a directed set, and let $(x_\alpha)_{\alpha\in A}$ be a net in~$S$. Suppose that $\alpha_1,\alpha_2,\alpha_3,\dotsc$ in~$A$ are such that $\alpha_1\leq\alpha_2\leq\alpha_3,\dotsc$, and such that $\alpha_1<\alpha_2<\alpha_3,\dotsc$ when~$A$ has no largest element. Then the sequence $(x_{\alpha_n})_{n\geq 1}$ is said to be \emph{embedded in the net $(x_\alpha)_{\alpha\in A}$}.
\end{definition}

It is not required that the~$\alpha_n$ form a cofinal subset of~$A$, only that the map $n\mapsto\alpha_n$ be increasing, and strictly increasing when~$A$ has no largest element. The embedded sequences in a given sequence are its subsequences. 

\cref{res:subsequence} asserts the existence of embedded unbounded order convergent sequences in topologically convergent nets, where the indices~$\alpha_n$ for the sequences can even be found in a specific way. A particular case is the existence of unbounded order convergent subsequences of topologically convergent sequences. When the pertinent topology is metrisable, these two properties are equivalent. We now proceed to show this.

\begin{lemma}\label{res:nets_and_sequences_no_largest_element}
	Let $(E,\tau)$ be a topological vector lattice, where~$\tau$ is metrisable. The following are equivalent:
	
	\begin{enumerate}
		\item\label{part:nets_and_sequences_no_largest_element_1}
		for every directed set~$A$ that has no largest element, for every net $(x_
		\alpha)_{\alpha\in A}$ and~$x$ in~$E$ such that $x_\alpha\tauconv x$, one can choose any $\widetilde\alpha_1\in A$; then find an $\alpha_1\in A$; then choose any $\widetilde\alpha_2\in A$; then find an $\alpha_2\in A$; etc., such that:
		\begin{enumerate}
			
			\item
			$\alpha_1<\alpha_2<\alpha_3<\dotsc$;
			
			\item
			$\alpha_n>\widetilde\alpha_n$ for $n\geq 1$;
			
			\item
			$x_{\alpha_n}\uoconv x$ as $n\to\infty$.
			
		\end{enumerate}
		\item\label{part:nets_and_sequences_no_largest_element_2}
		every sequence in~$E$ that is $\tau$-convergent to an element~$x$ of~$E$ has a subsequence that is uo-convergent to~$x$.
	\end{enumerate}
\end{lemma}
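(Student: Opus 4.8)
The plan is to prove the two implications of this biconditional separately, noting that the interesting content lies in showing that the strong, seemingly more demanding statement \ref{part:nets_and_sequences_no_largest_element_1} follows from the innocuous-looking subsequence statement \ref{part:nets_and_sequences_no_largest_element_2}. The direction \ref{part:nets_and_sequences_no_largest_element_1}$\implies$\ref{part:nets_and_sequences_no_largest_element_2} should be almost immediate: given a $\tau$-convergent sequence, I would manufacture from it a net indexed by a directed set with no largest element and apply \ref{part:nets_and_sequences_no_largest_element_1}. The natural choice is to re-index the given sequence by $A=\mathbb{N}$ itself (which has no largest element), so that the net is the sequence, and the embedded sequence produced by \ref{part:nets_and_sequences_no_largest_element_1} — whose indices satisfy $\alpha_1<\alpha_2<\dotsc$ — is precisely a subsequence that is uo-convergent to $x$. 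Here the freedom to discard the auxiliary choices $\widetilde\alpha_n$ (by taking them trivial) makes the reduction clean.

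For the substantive direction \ref{part:nets_and_sequences_no_largest_element_2}$\implies$\ref{part:nets_and_sequences_no_largest_element_1}, I would exploit metrisability crucially: since $\tau$ is metrisable, zero has a countable neighbourhood basis $(W_k)_{k\geq 1}$, which we may take to be decreasing. Given a net $(x_\alpha)_{\alpha\in A}$ with $A$ having no largest element and $x_\alpha\tauconv x$, the idea is to extract from the net an honest sequence that is still $\tau$-convergent to $x$, and whose indices are strictly increasing and dominate the prescribed $\widetilde\alpha_n$. I would construct the indices $\alpha_n$ recursively. At stage $n$, having chosen the auxiliary $\widetilde\alpha_n$, I use $\tau$-convergence together with the countable basis: because $x_\alpha\tauconv x$, for the neighbourhood $W_n$ there is an index $\beta_n\in A$ beyond which all net values lie in $x+W_n$. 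Using that $A$ is directed and has no largest element, I can then pick $\alpha_n\in A$ with $\alpha_n>\widetilde\alpha_n$, $\alpha_n>\alpha_{n-1}$, and $\alpha_n\geq\beta_n$ simultaneously; the strict inequality $\alpha_n>\alpha_{n-1}$ is available precisely because $A$ has no largest element, so no index is terminal and one can always move strictly further out. This yields a strictly increasing sequence of indices with $\alpha_n>\widetilde\alpha_n$ and $x_{\alpha_n}\in x+W_n$ for each $n$, so that the sequence $(x_{\alpha_n})_{n\geq 1}$ is $\tau$-convergent to $x$.

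With this $\tau$-convergent sequence in hand, I would invoke hypothesis \ref{part:nets_and_sequences_no_largest_element_2} to obtain a subsequence $(x_{\alpha_{n_j}})_{j\geq 1}$ that is uo-convergent to $x$. The final bookkeeping step is to observe that relabelling this subsequence gives exactly the increasing-choice scheme demanded in \ref{part:nets_and_sequences_no_largest_element_1}: the indices $\alpha_{n_1}<\alpha_{n_2}<\dotsc$ are strictly increasing (being a subsequence of a strictly increasing index sequence), each still dominates the corresponding $\widetilde\alpha$, and they deliver uo-convergence. I expect the main obstacle to be the careful orchestration of the simultaneous constraints in the recursive selection of the $\alpha_n$ — in particular, verifying that the requirement of strict increase $\alpha_1<\alpha_2<\dotsc$ is genuinely compatible with dominating the $\widetilde\alpha_n$ and lying beyond $\beta_n$, which is where the hypothesis that $A$ has no largest element does the essential work. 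One must also phrase the extraction so that the alternating \emph{choose $\widetilde\alpha_n$, then find $\alpha_n$} structure of \ref{part:nets_and_sequences_no_largest_element_1} is respected, rather than fixing all auxiliary indices in advance; this is a matter of presenting the recursion in the correct order, but it is the point at which one must be most attentive to the precise quantifier pattern in the statement.
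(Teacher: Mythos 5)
Your proposal is correct and follows essentially the same route as the paper: the forward implication is the trivial re-indexing, and for the converse you use a countable neighbourhood basis to extract a strictly increasing, $\widetilde\alpha_n$-dominating index sequence along which the net is $\tau$-convergent, then apply the subsequence hypothesis and relabel. The only (harmless) cosmetic difference is that you take a basis at zero and translate, whereas the paper takes a basis at $x$ directly.
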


\begin{proof}
	It is clear that \ref{part:nets_and_sequences_no_largest_element_1} implies \ref{part:nets_and_sequences_no_largest_element_2}. We prove that \ref{part:nets_and_sequences_no_largest_element_2} implies \ref{part:nets_and_sequences_no_largest_element_1}. Suppose that an index set~$A$ with no largest element, a net $(x_
	\alpha)_{\alpha\in A}$ in~$E$, and $x\in E$ are such that $x_\alpha\tauconv x$. Take a countable $\tau$-neighbourhood basis $V_1,V_2,V_3,\dotsc$ of~$x$.  Choose any $\widetilde\alpha_1\in A$. 
	  Since~$A$ has no largest element, there exists an index $\alpha_1^\prime>\widetilde\alpha_1$ such that $x_\alpha\in V_1$ whenever $\alpha\geq\alpha_1^\prime$.   
	  For $k\geq 2$, we inductively choose any $\widetilde\alpha_k$ in~$A$, and then find an index $\alpha_k^\prime$ with $\alpha_k^\prime>\widetilde\alpha_{k}$ and $\alpha_k^\prime>\alpha_{k-1}^\prime$, such that $x_\alpha\in V_k$ whenever $\alpha\geq\alpha_k^\prime$. Then $x_{\alpha_k^\prime}\tauconv x$ as $k\to\infty$. By assumption, there is a subsequence $(x_{\alpha_{k_n}^\prime})_{n\geq 1}$ such that  $x_{\alpha_{k_n}^\prime}\uoconv x$ as $n\to\infty$. For $n\geq 1$, set $\alpha_n\coloneqq \alpha_{k_n}^\prime$. Then $\alpha_1,\alpha_2,\alpha_3,\dotsc$ are as required.
\end{proof}

When the index set~$A$ of a convergent net $(x_\alpha)_{\alpha\in A}$ has a largest element $\alpha_{\mathrm{largest}}$, its limit equals $x_{\alpha_{\mathrm{largest}}}$. Taking this into account, we have the following less precise consequence of \cref{res:nets_and_sequences_no_largest_element}.

\begin{corollary}\label{res:nets_and_sequences_general}
	Let $(E,\tau)$ be a topological vector lattice, where~$\tau$ is metrisable. The following are equivalent:
	\begin{enumerate}
		\item\label{part:nets_and_sequences_general_1}
		for every net $(x_\alpha)_{\alpha\in A}$ and~$x$ in~$E$ such that $x_\alpha\tauconv x$, there exist indices such that $\alpha_1\leq\alpha_2\leq\alpha_3,\dotsc$, and such that $\alpha_1<\alpha_2<\alpha_3<\dotsc$ when~$A$ has no largest element, with $x_{\alpha_n}\uoconv x$ as $n\to\infty$
		
		\item\label{part:nets_and_sequences_general_2}
		every sequence in~$E$ that is $\tau$-convergent to an element~$x$ of~$E$ has a subsequence that is uo-convergent to~$x$.
	\end{enumerate}
\end{corollary}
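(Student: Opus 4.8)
The plan is to read off this equivalence from \cref{res:nets_and_sequences_no_largest_element}, with the only new input being a trivial, by-hand treatment of nets whose index set has a largest element; metrisability enters solely through its use in that lemma.

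The implication \ref{part:nets_and_sequences_general_1}$\Rightarrow$\ref{part:nets_and_sequences_general_2} is immediate. A $\tau$-convergent sequence is nothing but a net indexed by the directed set of natural numbers, which has no largest element, so \ref{part:nets_and_sequences_general_1} delivers strictly increasing indices $\alpha_1<\alpha_2<\alpha_3<\dotsc$; these define an honest subsequence, and by \ref{part:nets_and_sequences_general_1} it is uo-convergent to~$x$. Hence \ref{part:nets_and_sequences_general_2} holds.

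For \ref{part:nets_and_sequences_general_2}$\Rightarrow$\ref{part:nets_and_sequences_general_1}, I would first observe that \ref{part:nets_and_sequences_general_2} is word-for-word condition \ref{part:nets_and_sequences_no_largest_element_2} of \cref{res:nets_and_sequences_no_largest_element}. Invoking that lemma, condition \ref{part:nets_and_sequences_no_largest_element_1} is available, and it supplies the required indices (with even more control, since the $\widetilde\alpha_n$ may be prescribed) for every net whose index set~$A$ has no largest element. This already yields \ref{part:nets_and_sequences_general_1} in that case, with strict inequalities $\alpha_1<\alpha_2<\dotsc$ as demanded. It remains only to treat a net $(x_\alpha)_{\alpha\in A}$ with $x_\alpha\tauconv x$ where~$A$ has a largest element $\alpha_{\mathrm{largest}}$. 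As noted just before the statement, the net is eventually equal to $x_{\alpha_{\mathrm{largest}}}$, so by Hausdorffness its limit is $x=x_{\alpha_{\mathrm{largest}}}$. Choosing the constant embedded sequence $\alpha_n\coloneqq\alpha_{\mathrm{largest}}$ for all $n\geq 1$ gives $x_{\alpha_n}=x$ for every~$n$, and a constant sequence order converges---hence uo-converges---to its value, so $x_{\alpha_n}\uoconv x$. The index condition $\alpha_1\leq\alpha_2\leq\dotsc$ holds trivially, and no strict inequality is required here precisely because~$A$ has a largest element. This establishes \ref{part:nets_and_sequences_general_1} and closes the equivalence.

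I do not expect any genuine obstacle: the substantive work is entirely carried by \cref{res:nets_and_sequences_no_largest_element}, and the corollary merely relaxes its conclusion. The one point that needs care is the bookkeeping between the strict and the non-strict index requirements---it is exactly the presence of a largest element that allows the weaker, non-strict conclusion, and this is what makes the separate constant-sequence argument both necessary and completely routine.
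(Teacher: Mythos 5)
Your argument is correct and coincides with the paper's own (implicit) reasoning: the paper likewise derives the corollary from \cref{res:nets_and_sequences_no_largest_element} together with the observation that a net whose index set has a largest element converges to the value at that index, handled by the constant embedded sequence. Nothing further is needed.
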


Obviously, there is a more general principle that underlies \cref{res:nets_and_sequences_general}, as its proof merely exploits the countability of a neighbourhood basis. We refrain from attempting to phrase this, but we do note that its analogues for, e.g., order convergence or relative uniform convergence are clearly also true.

\subsection{Measure spaces}

When $(X,\Sigma, \mu)$ is a measure space, we let $\Ell^0(X,\Sigma,\mu)$ denote the vector lattice of measurable functions on~$X$, with identification of two functions that are $\mu$-almost everywhere equal. Let $(f_\alpha)_{\alpha\in A}$ be a net in $\Ell_0(X,\Sigma,\mu)$, and let $f\in \Ell^0(X,\Sigma,\mu)$. Then $(f_\alpha)_{\alpha\in A}$ is said to \emph{converge globally in measure to~$f$} if
\[
\lim_\alpha \mu\big(\left\{t\in X: \abs{f_\alpha(t)-f(t)}\geq \varepsilon\right\}\big)=0
\]
for every $\varepsilon>0$, and to \emph{converge locally in measure to~$f$} (denoted by  $f_\alpha\conv{\mu^\ast} f$) if
\[
\lim_\alpha \mu\big(\left\{t\in A: \abs{f_\alpha(t)-f(t)}\geq \varepsilon\right\}\big)=0
\]
for every $\varepsilon>0$ and every measurable subset~$A$ of~$X$ with finite measure.

We recall that a measure space $(X,\Sigma,\mu)$ is called \emph{semi-finite} when, for every $A\in \Sigma$ with $\mu(A)=\infty$, there exists a measurable subset~$A_0$ of~$A$ such that $0<\mu(A_0)<\infty$. Clearly, $\sigma$-finite measure spaces are semi-finite.

\section{Metrisable and locally solidly submetrisable locally solid topologies}\label{sec:metrisable_and_submetrisable_topologies_on_vector_lattices}

\noindent
This section starts with a general metrisation theorem for locally solid vector lattices in \cref{subsec:metrisable_locally_solid_topologies}. It is of some interest in its own right, and essential to a number of proofs in the remainder of \cref{sec:metrisable_and_submetrisable_topologies_on_vector_lattices}. In \cref{subsec:locally_solidly_metrisable_locally_solid_topologies}, we consider locally solid submetrisability of general locally solid topologies. The final \cref{subsec:norm_unbounded_norm_lebesgue_and_fatou} specialises to norm, unbounded norm, o-Lebesgue,  and Fatou topologies.

\subsection{Metrisable locally solid topologies}\label{subsec:metrisable_locally_solid_topologies}

Suppose that a locally solid topology on a vector lattice is metrisable. In this case, a compatible metric can be chosen with convenient properties, as is shown by the following result. We shall need only the case of locally solid topologies, but we also include the case of locally convex-solid topologies for the sake of completeness.

\begin{theorem}\label{res:metrisation_theorem}
	
	Let $(E,\tau)$ be a locally solid vector lattice. If~$\tau$ is metrisable, then there exists a metric $d\colon E\times E\to[0,1]$ on~$E$ with the following properties:
	\begin{enumerate}
		\item\label{part:metrisation_theorem_1}
		$d$ is compatible with~$\tau$;
		
		\item\label{part:metrisation_theorem_2}
		$d$ is translation invariant;
		
		\item\label{part:metrisation_theorem_3}
		$d\left(0,x\right)\leq d\left(0,y\right)$ for $x,y\in E$ such that $\abs{x}\leq\abs{y}$;
		
		\item\label{part:metrisation_theorem_4}
		the metric balls $\left\{x\in E: d(0,x)<r\right\}$ and $\left\{x\in E: d(0,x)\leq r\right\}$ are solid for all $r\geq 0$.
	\end{enumerate}
	If, in addition,~$\tau$ is locally convex, then there exists a metric $d\colon E\times E\to[0,1]$ on~$E$ satisfying  \ref{part:metrisation_theorem_1}--\ref{part:metrisation_theorem_4}, as well as:
	\begin{enumerate}[resume]
		\item\label{part:metrisation_theorem_5}
		the metric balls $\left\{x\in E: d(c,x)<r\right\}$ and $\left\{x\in E: d(c,x)\leq r\right\}$ are convex for all $c\in E$ and $r\geq 0$.
	\end{enumerate}
\end{theorem}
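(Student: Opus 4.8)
The plan is to start from a countable neighbourhood basis at zero and manufacture a metric by hand, using the local solidity to ensure the solidity and monotonicity properties come for free. Since $\tau$ is metrisable, zero has a countable neighbourhood basis, and by local solidity I may take this basis to consist of solid sets. First I would refine it to a \emph{normal} sequence $(V_n)_{n\geq 1}$ of solid neighbourhoods of zero, i.e.\ with $V_{n+1}+V_{n+1}\subseteq V_n$ for all $n$; this is the standard device (as in the proof that a first-countable topological group is pseudometrisable, cf.\ \cite[Theorem~I.6.1]{schaefer_wolff_TOPOLOGICAL_VECTOR_SPACES_SECOND_EDITION:1999}) and can always be arranged by passing to a subsequence and using continuity of addition. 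Because each $V_n$ is solid, I may also assume each $V_n$ is symmetric, and I will normalise so that $V_0 = E$.

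Next I would define a gauge $F\colon E\to[0,1]$ by setting, for instance, $F(x)\coloneqq 2^{-n}$ where $n$ is the largest index with $x\in V_n$ (and $F(x)=0$ if $x\in\bigcap_n V_n$, which is $\{0\}$ by Hausdorffness, and $F(x)=1$ if $x\notin V_1$). The solidity of each $V_n$ immediately gives the monotonicity property $\abs{x}\leq\abs{y}\implies F(x)\leq F(y)$, and symmetry gives $F(-x)=F(x)$. The gauge $F$ itself need not satisfy the triangle inequality, so I would define
\[
d(0,x)\coloneqq\inf\Bigl\{\sum_{i=1}^{k}F(x_i): k\geq 1,\ x=\textstyle\sum_{i=1}^{k}x_i\Bigr\}
\]
and extend by translation invariance via $d(x,y)\coloneqq d(0,y-x)$. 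This is the classical chaining construction. The standard estimate, proved by induction on the length of the chain using the normality $V_{n+1}+V_{n+1}\subseteq V_n$, yields $\tfrac{1}{2}F(x)\leq d(0,x)\leq F(x)$, from which symmetry, the triangle inequality, and compatibility with $\tau$ (the $d$-balls and the $V_n$ are mutually cofinal) all follow. I would emphasise that properties \ref{part:metrisation_theorem_1} and \ref{part:metrisation_theorem_2} are built into the construction, while the sandwich estimate transfers the solidity and monotonicity of $F$ to $d$: since $d(0,\cdot)$ is within a factor $2$ of $F$, and since each $F$-sublevel set is solid, a short argument shows the $d$-balls are solid and $d(0,x)\leq d(0,y)$ whenever $\abs{x}\leq\abs{y}$. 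This gives \ref{part:metrisation_theorem_3} and \ref{part:metrisation_theorem_4}.

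The main obstacle is getting property \ref{part:metrisation_theorem_3}, the \emph{exact} monotonicity $d(0,x)\leq d(0,y)$ for $\abs{x}\leq\abs{y}$, rather than merely monotonicity up to the constant $2$ inherited from the sandwich estimate. The chaining infimum does not visibly respect the lattice order, because a decomposition $y=\sum y_i$ need not induce a decomposition of $x$. I would resolve this by first carrying out the chaining construction to obtain a translation-invariant metric $d_0$ compatible with $\tau$ whose balls are within a factor $2$ of the $V_n$, and then \emph{symmetrising over the order} by replacing $d_0$ with
\[
d(0,x)\coloneqq\sup\bigl\{d_0(0,z): \abs{z}\leq\abs{x}\bigr\}.
\]
The supremum is finite (bounded by $1$), the resulting $d(0,\cdot)$ is manifestly monotone in $\abs{\cdot}$, and one checks it still defines a translation-invariant metric compatible with $\tau$: it dominates $d_0$ but is dominated by a fixed multiple of $d_0$ on each $V_n$ because the $V_n$ are solid, so the two metrics are uniformly equivalent and the balls of $d$ are again solid. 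This monotone regularisation is where the local solidity is genuinely used a second time.

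For the locally convex addendum, I would instead take the $V_n$ to be convex as well as solid, available because $\tau$ is now locally convex-solid and zero has a basis of convex solid neighbourhoods. Then the Minkowski gauges $p_n(x)\coloneqq\inf\{\lambda>0: x\in\lambda V_n\}$ are seminorms, each monotone in $\abs{\cdot}$ by solidity, and I would set
\[
d(x,y)\coloneqq\sum_{n\geq 1} 2^{-n}\,\frac{p_n(x-y)}{1+p_n(x-y)}.
\]
Translation invariance is immediate; the map $t\mapsto t/(1+t)$ is concave and increasing, so it preserves monotonicity in $\abs{\cdot}$ and subadditivity, giving \ref{part:metrisation_theorem_2}, \ref{part:metrisation_theorem_3}, and the triangle inequality; compatibility with $\tau$ holds because the $V_n$ form a basis. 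Solidity of the balls follows from the monotonicity of each $p_n$, and convexity of the balls \ref{part:metrisation_theorem_5} follows because each summand $p_n/(1+p_n)$, being an increasing concave function of the seminorm $p_n$, is a quasiconvex function of $x-y$, and the sublevel sets of $d$ are intersections of sublevel sets that are translates of convex sets. I expect the convex case to be routine once the seminorms are in hand, with the only care needed in verifying that the bounded rescaling $p_n/(1+p_n)$ does not destroy convexity of the balls; this is standard since a sum of quasiconvex functions that are themselves increasing functions of a common convex gauge remains well-behaved on balls.
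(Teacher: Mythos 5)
Your route to properties \ref{part:metrisation_theorem_1}--\ref{part:metrisation_theorem_4} is genuinely different from the paper's. The paper takes the Schaefer--Wolff gauge $\abs{x}=\inf\{p_H:x\in V_H\}$ with $V_H=\sum_{n\in H}V_n$ and observes that each $V_H$, being a finite sum of solid sets, is itself solid (a consequence of the Riesz decomposition property), so the gauge is monotone from the outset and no regularisation is needed; moreover its closed balls are literally $\bigcap_{H:p_H>r}V_H$. Your chaining-plus-order-regularisation can be made to work, but the step you dismiss with ``one checks it still defines a translation-invariant metric'' hides the only genuinely nontrivial verification: the triangle inequality for $d(0,x)=\sup\{d_0(0,z):\abs{z}\leq\abs{x}\}$. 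Given $\abs{w}\leq\abs{x+y}\leq\abs{x}+\abs{y}$ you must split $w=w_1+w_2$ with $\abs{w_1}\leq\abs{x}$ and $\abs{w_2}\leq\abs{y}$ --- that is, invoke the Riesz decomposition property --- and then use translation invariance of $d_0$; without this the supremum over an order interval has no reason to be subadditive. So this half is repairable, at the cost of importing exactly the lattice-theoretic fact the paper uses to see that the $V_H$ are solid.

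The locally convex addendum, however, fails. The Fr\'echet combination $d(x,y)=\sum_n 2^{-n}p_n(x-y)/(1+p_n(x-y))$ does \emph{not} have convex balls in general: each summand is quasiconvex, but a sum of quasiconvex functions need not be quasiconvex, and the sublevel set of a sum is not an intersection of sublevel sets of the summands, which is what your justification implicitly assumes. Concretely, take $E=\mathbb{R}^2$ with the coordinatewise order and lattice seminorms $p_1(x,y)=\abs{x}$, $p_2(x,y)=\abs{y}$; then $d\bigl(0,(\tfrac13,0)\bigr)=d\bigl(0,(0,1)\bigr)=\tfrac18$, while the midpoint satisfies $d\bigl(0,(\tfrac16,\tfrac12)\bigr)=\tfrac1{14}+\tfrac1{12}=\tfrac{13}{84}>\tfrac18$, so the closed ball of radius $\tfrac18$ is not convex. (This is the familiar non-convexity of the balls of the standard metric on $\mathbb{R}^{\mathbb{N}}$.) Property \ref{part:metrisation_theorem_5} is precisely the point where the paper's choice of metric earns its keep: its closed balls are by construction intersections of the convex sets $V_H$, and the open balls are nested unions of such intersections. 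A weighted sum of bounded transforms of seminorms cannot be massaged into this form, so this part of your argument needs to be replaced, not patched.
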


\begin{proof}
	We recall how a metric on a metrisable topological vector space~$E$ can be found that satisfies~\ref{part:metrisation_theorem_1} and~\ref{part:metrisation_theorem_2}. We start by choosing a neighbourhood basis $(V_n)_{n\geq 1}$ of (not necessarily open) balanced neighbourhoods of zero such that
	\begin{equation}\label{eq:local_base_inclusions}
		V_{n+1}+V_{n+1}\subseteq V_n
	\end{equation}
	for $n\geq 1$. For each non-empty finite subset~$H$ of $\left\{1,2,\dotsc\right\}$, we set $V_H\coloneqq \sum_{n\in H} V_n$ and $p_H\coloneqq\sum_{n\in H}2^{-n}$. For $x\in E$, set
	\begin{equation*}
		\abs{x}\coloneqq
		\begin{cases}
			1&\text{ if } x \text{ is not in any of the } V_H;\\
			\inf_H \left\{p_H: x\in V_H\right\}&\text{ otherwise}.
		\end{cases}	
	\end{equation*}
	As is shown in \cite[Proof of Theorem~I.6.1]{schaefer_wolff_TOPOLOGICAL_VECTOR_SPACES_SECOND_EDITION:1999}, one then obtains a metric~$d$ on~$E$ satisfying~\ref{part:metrisation_theorem_1} and~\ref{part:metrisation_theorem_2} by defining $d\left(x,y\right)\coloneqq \abs{x-y}$ for $x,y\in E$.

	In the case of a locally solid vector lattice, one can choose the~$V_n$ such that they are also solid. Since a finite sum of solid subsets is solid as a consequence of (a precise form of) the Riesz decomposition (see \cite[Theorem~1.10]{aliprantis_burkinshaw_LOCALLY_SOLID_RIESZ_SPACES_WITH_APPLICATIONS_TO_ECONOMICS_SECOND_EDITION:2003}), it is then clear that~\ref{part:metrisation_theorem_3} holds, which obviously implies~\ref{part:metrisation_theorem_4}.
	
	Suppose that~$\tau$ is locally solid as well as locally convex. In this case, the~$V_n$ can be chosen to be solid as well as convex, as a consequence of the fact that the convex hull of a solid set is solid (see \cite[Theorem~1.11]{aliprantis_burkinshaw_LOCALLY_SOLID_RIESZ_SPACES_WITH_APPLICATIONS_TO_ECONOMICS_SECOND_EDITION:2003}). The resulting metric~$d$ from the construction satisfies \ref{part:metrisation_theorem_1}--\ref{part:metrisation_theorem_4}; we now show that the convexity of the~$V_n$ yields~\ref{part:metrisation_theorem_5}. By the  translation invariance of~$d$, it suffices to take $c=0$ in~\ref{part:metrisation_theorem_5}. Since the~$V_n$ are convex, so are the~$V_H$. Furthermore, if $H_1, H_2$ are nonempty subsets of $\left\{1,2,\dotsc\right\}$ such that $p_{H_1}\leq p_{H_2}$, then a moment's thought shows that $V_{H_1}\subseteq V_{H_2}$ as a consequence of \cref{eq:local_base_inclusions}. Together with the density of the dyadic rationals, this implies that
	\begin{equation*}
		\left\{x\in E: d\left(0,x\right)\leq r\right\}=
		\begin{cases} \bigcap_{H:p_H>r} V_H&\text{ if }0\leq r< 1;\\
			E&\text{ if }r\geq 1;\\
		\end{cases}
	\end{equation*}
	which is convex. Since $\left\{x\in E: d\left(0,x\right)<r\right\}=\bigcup_{0\leq s<r }\left\{x\in E: d\left(0,x\right)\leq s\right\}$ is a nested union of convex sets, all open balls are convex as well.
\end{proof}

\subsection{Locally solidly submetrisable locally solid topologies}\label{subsec:locally_solidly_metrisable_locally_solid_topologies}

We recall that a linear topology on a topological vector space is called \emph{submetrisable} if it is finer than a metrisable linear topology on the space. In \cite{kandic_taylor:2018}, metrisability and submetrisability of minimal and of unbounded locally solid topologies are studied. In this section, we are mainly concerned with a \emph{local} version of submetrisability, and then for general locally solid topologies.

\begin{definition}\label{def:l.c.m.t.}
	Let $(E,\tau)$ be a locally solid vector lattice.  We say that~$\tau$ is:
	\begin{enumerate}
		\item\label{part:l.c.m.t._1}
		\emph{\sumet}\ if it is finer than some metrisable locally solid topology on~$E$;\footnote{In \cite{kandic_taylor:2018},~$\tau$ is then called \emph{Riesz submetrisable}. The present terminology may be a little more suggestive.}
		
		\item\label{part:l.c.m.t._2}
		\emph{\locsumet}\ if, for every $x\in E$, the restricted topology  $\tau\vert_{B_x}$ is \sumet;
		
		\item\label{part:l.c.m.t._3}
		\emph{\locsucommet}\ if, for every $x\in E^+$, there exists a metrisable locally solid topology $\widetilde\tau_x$ on~$B_x$ such that:
		\begin{enumerate}
			\item\label{part:l.c.m.t._3_a}
			 the restricted topology $\tau\vert_{B_x}$ is finer than $\widetilde\tau_x$;
			
			\item\label{part:l.c.m.t._3_b}
			every $\widetilde\tau_x$-Cauchy sequence in the order interval $[0,x]$ is $\widetilde\tau_x$-conver\-gent to an element of $[0,x]$.
	\end{enumerate}
	\end{enumerate}  	
\end{definition}

Clearly, when a locally solid topology is \sumet\ or \locsucommet, it is \locsumet. When the vector lattice has a weak order unit, being \sumet\ and being \locsumet\ are equivalent. The locally solid topologies to which the basic \cref{res:subsequence} applies are the \locsucommet\ ones.

\begin{remark}\label{rem:invariant}\quad
	\begin{enumerate}		
		\item
		It follows from \cref{res:metrisation_theorem} that the metrics figuring in the three parts of \cref{def:l.c.m.t.} can be chosen to be translation invariant and such that they are `lattice metrics' as in part~\ref{part:metrisation_theorem_3} of that theorem. In the case of a \locsucommet\ topology~$\tau$, they then make $[0,x]$ into a complete metric space as a consequence of their translation invariance; see \cite[p.20-21]{rudin_FUNCTIONAL_ANALYSIS_SECOND_EDITION:1991}. These observations will frequently be used in the sequel.
		
		\item
		The completeness in part~\ref{part:l.c.m.t._3} is required only for the single interval $[0,x]$, but we have refrained from including this into the terminology. This minimal requirement is already sufficient for the proof of the key \cref{res:subsequence} to be valid. In fact, as the case of the un-topology will show, a more `natural' requirement, such as requiring that all order intervals in~$B_x$ be $\widetilde\tau_x$-complete, could well be asking too much; see \cref{rem:proof_for_un-topology}.
	\end{enumerate}
\end{remark}

For a locally solid topology, being metrisable and being solidly submetrisable are actually equivalent, as is shown by our next result, \cref{res:redundancy}.\footnote{In the terminology of \cite{kandic_taylor:2018}, it shows that, for locally solid topologies, submetrisability coincides with Riesz submetrisability.} Hence there is a certain redundancy in the parts~\ref{part:l.c.m.t._1} and~\ref{part:l.c.m.t._2} of \cref{def:l.c.m.t.}, where we could equally well have spoken of `submetrisable' and `locally submetrisable'. This redundancy is, however, not present in part~\ref{part:l.c.m.t._3}, since completeness may be lost when passing to a finer topology, as is done in \cref{res:redundancy}. For reasons of uniformity, we have, therefore, still included the solidness in the terminology in all parts of \cref{def:l.c.m.t.}.

\begin{proposition}\label{res:redundancy}
	Let $(E,\tau)$ be a locally solid vector lattice. Suppose that there exists a metrisable linear topology~$\tau^\ast$ on~$E$ that is coarser than~$\tau$. Then there exists a metrisable locally solid topology~$\widetilde\tau$ on~$E$ that is coarser than~$\tau$ and finer than~$\tau^\ast$. If~$\tau$ is an o-Lebesgue topology \uppars{resp.\ a uo-Lebesgue topology}, then any such~$\widetilde\tau$ is an o-Lebesgue topology \uppars{resp.\ a uo-Lebesgue topology}. If~$\tau$ is a Fatou topology, then~$\widetilde\tau$ can be chosen to be a Fatou topology.
\end{proposition}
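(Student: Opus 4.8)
The plan is to build $\widetilde\tau$ by hand as the locally solid topology generated by a carefully chosen countable basis of solid sets, and then to read off all the remaining assertions from the single inclusion $\widetilde\tau\subseteq\tau$ (for the (u)o-Lebesgue statements) or from a refinement of the construction (for the Fatou statement). First I would fix a countable $\tau^\ast$-neighbourhood basis $(W_n)_{n\geq 1}$ of zero, which exists since $\tau^\ast$ is metrisable, and which I may take to be decreasing; as $\tau^\ast$ is Hausdorff, $\bigcap_n W_n=\{0\}$, and as $\tau^\ast$ is coarser than~$\tau$, each $W_n$ is also a $\tau$-neighbourhood of zero. Using that~$\tau$ is locally solid together with the continuity of addition for~$\tau$ (so that every $\tau$-neighbourhood~$O$ of zero contains $U+U$ for some solid $\tau$-neighbourhood~$U$), I would inductively produce solid $\tau$-neighbourhoods $U_n$ of zero with $U_n\subseteq W_n$ and $U_{n+1}+U_{n+1}\subseteq U_n$: having~$U_n$, pick a $\tau$-neighbourhood~$V$ with $V+V\subseteq U_n$, then a solid $\tau$-neighbourhood contained in~$V$ and a solid $\tau$-neighbourhood contained in~$W_{n+1}$, and let $U_{n+1}$ be their intersection, which is solid because an intersection of solid sets is solid. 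Then $U_{n+1}\subseteq W_{n+1}$ and $U_{n+1}+U_{n+1}\subseteq V+V\subseteq U_n$, and $0\in U_{n+1}$ forces $U_{n+1}\subseteq U_n$.

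The sequence $(U_n)$ now consists of solid, absorbing sets (being $\tau$-neighbourhoods of zero) that are decreasing and satisfy $U_{n+1}+U_{n+1}\subseteq U_n$, so by the standard criterion for bases of locally solid topologies (see \cite[Theorem~2.17]{aliprantis_burkinshaw_LOCALLY_SOLID_RIESZ_SPACES_WITH_APPLICATIONS_TO_ECONOMICS_SECOND_EDITION:2003}) it is a neighbourhood basis at zero for a locally solid topology~$\widetilde\tau$. Since this basis is countable,~$\widetilde\tau$ is metrisable, and since $\bigcap_n U_n\subseteq\bigcap_n W_n=\{0\}$ it is Hausdorff. The inclusions $U_n\subseteq W_n$ show that every $\tau^\ast$-neighbourhood of zero is a $\widetilde\tau$-neighbourhood of zero, giving $\tau^\ast\subseteq\widetilde\tau$, while the fact that each~$U_n$ is a $\tau$-neighbourhood of zero gives $\widetilde\tau\subseteq\tau$; thus~$\widetilde\tau$ is coarser than~$\tau$ and finer than~$\tau^\ast$, as required.

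For the (u)o-Lebesgue assertions, nothing beyond $\widetilde\tau\subseteq\tau$ is needed, which is why they hold for \emph{any} admissible~$\widetilde\tau$. Indeed, if~$\tau$ is o-Lebesgue and $x_\alpha\oconv x$, then $x_\alpha\tauconv x$, and since~$\widetilde\tau$ is coarser than~$\tau$ this yields $x_\alpha\conv{\widetilde\tau} x$; hence~$\widetilde\tau$ is o-Lebesgue. The uo-Lebesgue case is verbatim the same argument with uo-convergence in place of order convergence. For the Fatou statement I would rerun the construction, but at each selection step choose the solid $\tau$-neighbourhoods to be \emph{order closed} as well; this is possible precisely because a Fatou topology has a neighbourhood basis of order closed solid sets. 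Since an intersection of order closed sets is order closed and an intersection of solid sets is solid, the resulting~$U_n$ are order closed and solid, so~$\widetilde\tau$ then has a neighbourhood basis of order closed solid sets and is Fatou; note the wording "can be chosen" reflects that this conclusion depends on the construction rather than on~$\widetilde\tau$ alone.

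The only point that requires genuine care is the simultaneous bookkeeping in the induction: at each step one must fit a single solid (in the Fatou case, order closed solid) neighbourhood both inside~$W_{n+1}$, to keep $\widetilde\tau$ finer than~$\tau^\ast$, and inside a set whose self-sum lands in~$U_n$, to secure the additive condition that makes $(U_n)$ a base for a linear topology. The observation that makes this painless is that taking intersections preserves solidity and order-closedness, so these two requirements can simply be imposed on two separate neighbourhoods and then intersected, never needing the sum of order closed sets (which may fail to be order closed).
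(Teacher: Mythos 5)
Your proof is correct and follows essentially the same route as the paper's: intersect, at each inductive step, a solid $\tau$-neighbourhood securing the normality condition $U_{n+1}+U_{n+1}\subseteq U_n$ with one contained in the given $\tau^\ast$-basic neighbourhood, invoke the standard criterion for the resulting countable filter base of solid absorbing sets to generate a metrisable locally solid topology squeezed between $\tau^\ast$ and $\tau$, deduce the (u)o-Lebesgue claims from coarseness alone, and obtain the Fatou claim by additionally choosing the neighbourhoods order closed (intersections preserving both solidity and order closedness). The only cosmetic difference is the citation used for the neighbourhood-basis criterion.
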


\begin{proof}
	Take a metrisable linear topology~$\tau^\ast$ on~$E$ that is coarser than~$\tau$, and let $V_1,V_2,\dotsc$ be a countable neighbourhood basis at zero for~$\tau^\ast$. Since~$\tau^\ast$ is coarser than~$\tau$ and~$\tau$ is locally solid, we can choose solid $\tau$-neighbourhoods $U_1,U_2,\dotsc$ of zero such that $U_n\subseteq V_n$ for all~$n$. If~$\tau$ is a Fatou topology, then we choose such~$U_n$ that are also order closed. Set $\widetilde V_1\coloneqq U_1\subseteq V_1$. Then~$\widetilde V_1$ is a solid~$\tau$-neigbourhood of zero, which is order closed if~$\tau$ is a Fatou topology. There exists a solid~$\tau$-neighbourhood~$A_2$ of zero such that $A_2 + A_2\subseteq \widetilde V_1$. If~$\tau$ is a Fatou topology, then we choose such an~$A_2$ that is order closed. Set $\widetilde V_2\coloneqq A_2\cap U_2\subseteq U_2\subseteq V_2$. Then~$\widetilde V_2$ is a solid $\tau$-neighbourhood of zero, which is order closed if~$\tau$ is a Fatou topology. Furthermore, $\widetilde V_2+\widetilde V_2\subseteq A_2+A_2\subseteq \widetilde V_1$. Continuing this way, we construct a normal sequence $\widetilde V_1,\widetilde V_2,\dotsc$ of solid $\tau$-neighbourhoods of zero such that $\widetilde V_n\subseteq V_n$, and which can be chosen to be order closed if~$\tau$ is a Fatou topology. Since $\left\{0\right\}\subseteq\bigcap_{n\geq 1}\widetilde V_n\subseteq \bigcap_{n\geq 1} V_n=\left\{0\right\}$, it is now clear from \cite[Theorem~5.1]{kelley_namioka_LINEAR_TOPOLOGICAL_SPACES_SECOND_CORRECTED_PRINTING:1976} that the~$\widetilde V_n$ form a neighbourhood basis at zero for a linear topology on~$E$ which is obviously coarser than~$\tau$, finer than~$\tau^\ast$, metrisable, locally solid, and a Fatou topology if~$\tau$ is.
	
	The statement that~$\widetilde\tau$ is an o-Lebesgue topology or a uo-Lebesgue topology when~$\tau$ is, is obviously already true for any locally solid topology that is coarser than~$\tau$.
\end{proof}

The following is an easy consequence of \cref{res:redundancy} and the fact that the inclusion map from a regular vector sublattice into the superlattice preserves order convergence and unbounded order convergence. It will be used in the proof of \cref{res:when_fatou_is_submetrisable_dedekind}.

\begin{corollary}\label{res:can_be_lebesgue_fatou}
	Let $(E,\tau)$ be a locally solid vector lattice, and let~$F$ be a regular vector sublattice of~$E$ such that  there exists a metrisable linear topology~$\tau^\ast$ on~$F$ that is coarser than~$\tau\vert_F$. Then there exists a metrisable locally solid topology~$\widetilde\tau$ on~$F$ that is coarser than~$\tau\vert_F$ and finer than~$\tau^\ast$. If~$\tau$ is an o-Lebesgue topology \uppars{resp.\ a uo-Lebesgue topology}, then~$\tau\vert_F$ and any such~$\widetilde\tau$ are  o-Lebesgue topologies \uppars{resp.\ uo-Lebesgue topologies}. If~$\tau$ is a Fatou topology, then so is~$\tau\vert_F$, and in this case~$\widetilde\tau$ can be chosen to be a Fatou topology.
\end{corollary}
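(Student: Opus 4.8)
The plan is to reduce everything to \cref{res:redundancy} applied to the locally solid vector lattice $(F,\tau\vert_F)$, with the bulk of the work being the verification that the relevant properties of~$\tau$ descend to~$\tau\vert_F$ thanks to the regularity of~$F$.

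First I would check that $(F,\tau\vert_F)$ is itself a locally solid vector lattice. This is routine: if $(U_i)$ is a $\tau$-neighbourhood basis at zero in~$E$ consisting of solid sets, then each $U_i\cap F$ is solid in~$F$, and the $U_i\cap F$ form a $\tau\vert_F$-neighbourhood basis at zero. Hence~$\tau\vert_F$ is locally solid, and \cref{res:redundancy} may legitimately be invoked for~$F$.

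Next I would transfer the Lebesgue and Fatou properties. For the o-Lebesgue case, let $(x_\alpha)_{\alpha\in A}$ be a net in~$F$ and $x\in F$ with $x_\alpha\oconv x$ in~$F$. Since~$F$ is a regular vector sublattice of~$E$, $x_\alpha\oconv x$ in~$E$ as well; as~$\tau$ is o-Lebesgue, $x_\alpha\tauconv x$, which restricts to $\tau\vert_F$-convergence. Thus~$\tau\vert_F$ is o-Lebesgue. The uo-Lebesgue case is identical, now using the fact recalled in \cref{subsec:vector_lattices} that a regular vector sublattice preserves uo-convergence into the superlattice: $x_\alpha\uoconv x$ in~$F$ forces $x_\alpha\uoconv x$ in~$E$, hence $x_\alpha\tauconv x$. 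For the Fatou case, I would observe that the intersection with~$F$ of an order closed solid $\tau$-neighbourhood of zero is again order closed and solid in~$F$: regularity guarantees that a net in such a set that order converges in~$F$ also order converges in~$E$, which forces the limit to lie in the set. Intersecting an order closed solid $\tau$-neighbourhood basis with~$F$ therefore exhibits~$\tau\vert_F$ as a Fatou topology.

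Finally I would apply \cref{res:redundancy} to $(F,\tau\vert_F)$ with the coarser metrisable linear topology~$\tau^\ast$. This immediately produces a metrisable locally solid topology~$\widetilde\tau$ on~$F$ that is coarser than~$\tau\vert_F$, finer than~$\tau^\ast$, and Fatou whenever~$\tau\vert_F$ is. The o-Lebesgue (resp.\ uo-Lebesgue) assertions for~$\widetilde\tau$ then follow from the closing remark in the proof of \cref{res:redundancy}, since these properties are inherited by every locally solid topology that is coarser than an o-Lebesgue (resp.\ uo-Lebesgue) one. The only genuinely delicate point is the uo-Lebesgue case, whose descent to~$\tau\vert_F$ hinges entirely on the characterisation of regular sublattices via preservation of uo-convergence; everything else is a direct unwinding of the definitions.
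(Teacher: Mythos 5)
Your argument is correct and is precisely the route the paper intends: the paper presents this corollary as an immediate consequence of \cref{res:redundancy} together with the fact that the inclusion of a regular vector sublattice preserves order and unbounded order convergence, which is exactly the descent-then-apply-\cref{res:redundancy} scheme you carry out. Your explicit verifications (solidity of $U_i\cap F$, order closedness of $V\cap F$ via regularity, and the transfer of the o-Lebesgue and uo-Lebesgue properties to $\tau\vert_F$) fill in the details the paper leaves to the reader, and no step is missing.
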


In the definition of local submetrisability, we could also have used a seemingly weaker condition, as is shown by the following result that will be used in the proof of \cref{res:when_fatou_is_submetrisable_dedekind}.

\begin{proposition}\label{res:tau_I_x_B_x}
	Let $(E,\tau)$ be a locally solid vector lattice. Take $x\in E$. The following are equivalent:
	\begin{enumerate}
		\item\label{part:tau_I_x_B_x_1}
		$\tau\vert_{B_x}$ is \sumet;
		
		\item\label{part:tau_I_x_B_x_2}
		$\tau\vert_{I_x}$ is \sumet.
	\end{enumerate}
\end{proposition}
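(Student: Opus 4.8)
My plan is to treat the two implications separately, reducing at the outset to the case $x\ge 0$ (legitimate since $I_x=I_{\abs{x}}$ and $B_x=B_{\abs{x}}$), and to keep in mind two structural facts about the inclusion $I_x\subseteq B_x$ that I will establish in one line each: that $I_x$ is an \emph{order dense ideal} in $B_x$ (a non-zero $0<y\in B_x$ cannot be disjoint from $I_x$, else $y\in I_x^\dc\cap I_x^{\dc\dc}=\{0\}$), and that $x$ is a \emph{weak order unit} of $B_x$, i.e.\ $\{x\}^\dc=\{0\}$ in $B_x$, since the band generated by $x$ inside $B_x$ is $B_x$ itself. The implication \ref{part:tau_I_x_B_x_1}$\Rightarrow$\ref{part:tau_I_x_B_x_2} is the routine direction: given a metrisable locally solid topology $\sigma$ on $B_x$ that is coarser than $\tau\vert_{B_x}$, I would simply restrict it to the sublattice $I_x$. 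The restriction $\sigma\vert_{I_x}$ is again metrisable (a countable neighbourhood basis restricts to one), locally solid (the trace on a sublattice of a solid set is solid), and coarser than $\tau\vert_{I_x}$, which is exactly what \ref{part:tau_I_x_B_x_2} asks for.

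For the substantial direction \ref{part:tau_I_x_B_x_2}$\Rightarrow$\ref{part:tau_I_x_B_x_1}, the plan is to transport a witnessing topology from $I_x$ up to $B_x$ via the unbounded-topology construction of \cref{subsec:locally_solid_topologies_on_vector_lattices}. Starting from a metrisable locally solid topology $\sigma$ on $I_x$ that is coarser than $\tau\vert_{I_x}$, I would fix a decreasing countable basis $V_1\supseteq V_2\supseteq\dotsb$ of solid $\sigma$-neighbourhoods of zero with $\bigcap_n V_n=\{0\}$. Because $I_x$ is an order dense ideal in $B_x$, the topology $\widetilde\sigma\coloneqq\unb_{I_x}\sigma$ is a well-defined locally solid topology on $B_x$, and the goal reduces to showing that $\widetilde\sigma$ is metrisable and coarser than $\tau\vert_{B_x}$; producing such a $\widetilde\sigma$ establishes \ref{part:tau_I_x_B_x_1}.

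The main step, and the hard part, is the metrisability of $\widetilde\sigma$: a priori the construction of $\unb_{I_x}\sigma$ ranges over all $y\in I_x$, so the point is to collapse this to a countable family by exploiting that $x$ generates $I_x$ as an ideal. Concretely, I would show that $(U_{V_n,x})_{n\ge 1}$ is already a neighbourhood basis at zero for $\widetilde\sigma$. Given a general basic neighbourhood $U_{V,y}$, since $y$ lies in the ideal generated by $x$ there is $m\in\mathbb{N}$ with $\abs{y}\le m x$, so $\abs{z}\wedge\abs{y}\le m(\abs{z}\wedge x)$ for all $z\in B_x$; choosing a solid $\sigma$-neighbourhood $W$ with $mW\subseteq V$ and then $n$ with $V_n\subseteq W$, the solidity of $V$ yields $U_{V_n,x}\subseteq U_{V,y}$. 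This countable basis gives metrisability, and Hausdorffness comes for free from $\bigcap_n U_{V_n,x}=\{z\in B_x:\abs{z}\wedge x=0\}=\{x\}^\dc=\{0\}$, the weak order unit property. Finally, to see that $\widetilde\sigma$ is coarser than $\tau\vert_{B_x}$, I would observe that since $\sigma$ is coarser than $\tau\vert_{I_x}$, each $V_n$ contains $W_n\cap I_x$ for a suitable solid $\tau\vert_{B_x}$-neighbourhood $W_n$ of zero in $B_x$; then $\abs{z}\wedge x\le x\in I_x$ together with solidity of $W_n$ gives $W_n\subseteq U_{V_n,x}$, so each $U_{V_n,x}$ is a $\tau\vert_{B_x}$-neighbourhood of zero. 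I expect the verification that the single gauge $y=x$ suffices to be where the real content lies; the remaining coarseness and Hausdorffness checks are short and formal.
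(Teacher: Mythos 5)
Your proof is correct, and the substantial direction \ref{part:tau_I_x_B_x_2}$\Rightarrow$\ref{part:tau_I_x_B_x_1} takes a genuinely different route from the paper's. The paper transports the metric directly: it takes a translation-invariant monotone metric $d_x^\ast$ for the witnessing topology on $I_x$ (via \cref{res:metrisation_theorem}), defines $\widetilde d_x(z_1,z_2)\coloneqq d_x^\ast\left(0,\abs{z_1-z_2}\wedge\abs{x}\right)$ on $B_x$, and verifies by hand --- using the Kelley--Namioka neighbourhood-basis criterion and an argument borrowed from the proof of the metrisability of the un-topology --- that the resulting balls generate a locally solid linear topology on $B_x$ coarser than $\tau\vert_{B_x}$. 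You instead invoke the general unbounded-topology construction $\unb_{I_x}\sigma$ from \cref{subsec:locally_solid_topologies_on_vector_lattices}, which hands you a locally solid (Hausdorff) topology on $B_x$ for free once you have checked that $I_x$ is an order dense ideal in $B_x$, and then you do the real work in proving first countability by collapsing the gauge family $\{U_{V,y}\}$ to the single gauge $y=x$ via $\abs{y}\leq mx$ and the inequality $\abs{z}\wedge\abs{y}\leq m(\abs{z}\wedge x)$. The two constructions in fact yield the same topology on $B_x$, and the underlying insight --- that measuring $\abs{z}\wedge\abs{x}$ in the topology of $I_x$ suffices precisely because $x$ generates $I_x$ as an ideal and is a weak order unit of $B_x$ --- is common to both. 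What your route buys is that the verification of the linear-topology axioms is outsourced to the cited machinery, at the cost of the extra countable-basis argument; the paper's route is more self-contained and produces the compatible metric explicitly, which is in keeping with how the same device is reused elsewhere in \cref{sec:metrisable_and_submetrisable_topologies_on_vector_lattices}. All of your auxiliary claims (order density of $I_x$ in $B_x$ via $I_x^\dc\cap I_x^{\dc\dc}=\{0\}$, the Hausdorffness check $\bigcap_n U_{V_n,x}=\{x\}^\dc=\{0\}$ in $B_x$, and the coarseness check using $\abs{z}\wedge x\leq\abs{z}$ and solidity) are sound.
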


\begin{proof} It is clear that~\ref{part:tau_I_x_B_x_1} implies~\ref{part:tau_I_x_B_x_2}. For the converse, suppose that there exists a locally solid topology~$\tau_x^\ast$ on~$I_x$ that is coarser than $\tau\vert_{I_x}$, and which is metrisable by means of a metric~$d_x^\ast$. By \cref{res:metrisation_theorem}, we may suppose that~$d_x^\ast$ is translation invariant, and that  $d_x^\ast\left(0,y_1\right)\leq d_x^\ast\left(0,y_2\right)$ for $y_1,y_2\in I_x$ with $\abs{y_1}\leq\abs{y_2}$.   Define $\widetilde d_x\colon  B_x\times B_x\to[0,\infty)$ by setting $\widetilde{d}_x\left(z_1, z_2\right)\coloneqq d_x^\ast\left(0, \abs{z_1-z_2}\wedge \abs{x}\right)$ for $z_1,z_2\in B_x$. Arguing as in the proof of \cite[Theorem~3.2]{kandic_marabeh_troitsky:2017}, it is easily seen that~$\widetilde{d}_x$ is a translation invariant metric on~$B_x$, and that the metric balls $\left\{z\in B_x:\widetilde d_x\left(0,z\right)<r\right\}$ are solid subsets of~$B_x$ for $r>0$.  Take $z\in B_x$.  Then $\abs{\lambda z}\conv{\tau} 0$ as $\lambda\to 0$. Since~$\tau$ is locally solid, we also have that  $\abs{\lambda z}\wedge\abs{x}\conv{\tau}0$. Hence $\abs{\lambda z}\wedge\abs{x}\conv{\tau\vert_{I_x}}0$, and then also $\abs{\lambda z}\wedge\abs{x}\conv{\tau_x^\ast}0$. This shows that $\widetilde d_x\left(0,\lambda z\right)\to 0$ as $\lambda\to 0$, so that the open balls  $\left\{z\in B_x:\widetilde d_x\left(0,z\right)<r\right\}$ are absorbing subsets of~$B_x$. It is now easy to see that these open balls satisfy the conditions in \cite[Theorem~5.1]{kelley_namioka_LINEAR_TOPOLOGICAL_SPACES_SECOND_CORRECTED_PRINTING:1976} to be a neighbourhood basis at zero for a linear topology~$\widetilde\tau_x$ on~$B_x$, which is obviously locally solid.

We are left to show that~$\widetilde\tau_x$ is coarser than $\tau\vert_{B_x}$. Let $(x_\alpha)_{\alpha\in A}$ be a net in~$B_x$ such that $x_\alpha\conv{\tau\vert_{B_x}} 0$ . Then $x_\alpha\conv{\tau} 0$ in~$E$, so that  $\abs{x_\alpha}\wedge\abs{x}\conv{\tau} 0$ in~$E$ as~$\tau$ is locally solid. Hence $\abs{x_\alpha}\wedge\abs{x}\conv{\tau\vert_{I_x}} 0$, and then also $\abs{x_\alpha}\wedge\abs{x}\conv{\tau_x^\ast} 0$ in~$I_x$. The definition of~$\widetilde{d}_x$ then shows that $x_\alpha\conv{\widetilde \tau_x} 0$ in~$B_x$, as desired.
	\end{proof}

For a locally solid topology, being \locsumet\ is determined by its carrier. 

\begin{proposition}\label{res:when_is_submetrisable}
	Let $(E,\tau)$ be a locally solid vector lattice. The following are equivalent:
	\begin{enumerate}
		\item\label{part:when_is_submetrisable_1}
		$\tau$ is \locsumet;
		\item\label{part:when_is_submetrisable_2}
		$C_\tau=E$.
	\end{enumerate}
\end{proposition}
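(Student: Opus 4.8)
The plan is to prove both implications band by band, reducing to $x \in E^+$ since $B_x = B_{\abs{x}}$, and to translate between the normal sequences of solid $\tau$-neighbourhoods of zero that define $C_\tau$ and the metrisable locally solid topologies witnessing local solid submetrisability. The basic dictionary is that, for a subset $S \subseteq E$, one has $x \in S^\dc$ exactly when $\abs{x}\wedge\abs{y} = 0$ for every $y \in S$, that is, when $S \subseteq \{x\}^\dc$.

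For \ref{part:when_is_submetrisable_1} $\Rightarrow$ \ref{part:when_is_submetrisable_2}, I would fix $x \in E^+$ and manufacture a normal sequence whose intersection is disjoint from $x$. Solid submetrisability of $\tau\vert_{B_x}$ supplies a coarser metrisable locally solid topology on $B_x$; for it, choose a normal sequence $(W_n)_{n\geq 1}$ of solid neighbourhoods of zero in $B_x$ with $\bigcap_n W_n = \{0\}$. Each $W_n$ is then a $\tau\vert_{B_x}$-neighbourhood of zero, so local solidity of $\tau$ yields a solid $\tau$-neighbourhood $U_n$ of zero in $E$ with $U_n \cap B_x \subseteq W_n$, and the halving construction from the proof of \cref{res:redundancy} converts the $U_n$ into a normal sequence $(V_n)_{n\geq 1} \in \mathcal{N}$ with $V_n \subseteq U_n$. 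Writing $N := \bigcap_n V_n$, which is solid, I would note $N \cap B_x \subseteq \bigcap_n(U_n \cap B_x) \subseteq \bigcap_n W_n = \{0\}$ and then upgrade this to $x \in N^\dc$: for any $y \in N$, solidity of $N$ forces $\abs{y}\wedge\abs{x} \in N$, while $\abs{y}\wedge\abs{x} \leq \abs{x}$ puts it in $B_x$, whence $\abs{y}\wedge\abs{x} \in N \cap B_x = \{0\}$ and $x \perp y$. Thus $x \in N^\dc \subseteq C_\tau$, and as $x \in E^+$ was arbitrary, $C_\tau = E$.

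For \ref{part:when_is_submetrisable_2} $\Rightarrow$ \ref{part:when_is_submetrisable_1}, I would fix $x \in E^+$ and use $x \in C_\tau$ to obtain a normal sequence $(V_n)_{n\geq 1} \in \mathcal{N}$ with $x \in N^\dc$, that is, $N := \bigcap_n V_n \subseteq \{x\}^\dc$. Restricting to the band, each $W_n := V_n \cap B_x$ is a solid, absorbing $\tau\vert_{B_x}$-neighbourhood of zero, the sequence $(W_n)_{n\geq 1}$ is normal because $B_x$ is a linear subspace, and $\bigcap_n W_n = N \cap B_x \subseteq \{x\}^\dc \cap B_x = \{0\}$ since a band meets its disjoint complement only at zero. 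By \cite[Theorem~5.1]{kelley_namioka_LINEAR_TOPOLOGICAL_SPACES_SECOND_CORRECTED_PRINTING:1976} the $W_n$ form a neighbourhood basis at zero for a locally solid topology on $B_x$ that is coarser than $\tau\vert_{B_x}$ and, having a countable neighbourhood basis at zero with trivial intersection, is metrisable; hence $\tau\vert_{B_x}$ is solidly submetrisable.

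The hard part is the forward direction's jump from $N \cap B_x = \{0\}$ to the strictly stronger disjointness $x \perp N$ that the carrier demands, which I would settle through the solidity of $N$ as above. The remaining care lies only in confirming that the restricted sequence $(W_n)$ generates a Hausdorff, hence metrisable, locally solid topology, for which the triviality of $\bigcap_n W_n$ is precisely what is needed.
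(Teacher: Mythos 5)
Your proof is correct, and it establishes the same translation as the paper's proof --- between metrisable locally solid topologies on the principal bands $B_x$ and the data defining the carrier --- but by a genuinely different technical route. The paper converts a compatible lattice metric $d_x$ on $B_x$ (supplied by \cref{res:metrisation_theorem}) into a $\tau$-continuous Riesz pseudonorm $\rho_x(z)\coloneqq d_x(0,\abs{x}\wedge\abs{z})$ on all of~$E$, observes that its null ideal is $\{x\}^\dc$, and then invokes the characterisation of~$C_\tau$ in terms of Riesz pseudonorms from \cite[Exercise~4.16]{aliprantis_burkinshaw_LOCALLY_SOLID_RIESZ_SPACES_WITH_APPLICATIONS_TO_ECONOMICS_SECOND_EDITION:2003} in both directions. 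You instead work straight from the definition of~$C_\tau$ via normal sequences of solid neighbourhoods: in the forward direction you lift a neighbourhood basis of the coarser metrisable topology on $B_x$ to solid $\tau$-neighbourhoods of zero in~$E$, normalise them by the halving construction of \cref{res:redundancy}, and use the solidity of $N=\bigcap_n V_n$ to upgrade $N\cap B_x=\{0\}$ to $x\in N^\dc$ --- precisely the step that the pseudonorm formulation handles automatically, since $\rho_x$ is built from $\abs{x}\wedge\abs{z}$. In the backward direction you restrict the normal sequence to $B_x$ and apply the Kelley--Namioka criterion, where the paper turns the pseudonorm into the metric $\rho_x(\abs{y_1-y_2})$ on $B_x$. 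All the individual steps check out (in particular the disjointness upgrade and the Hausdorffness of the restricted topology via $\bigcap_n W_n=\{0\}$). Your route is more self-contained, at the cost of reproving by hand the relevant direction of the carrier--pseudonorm dictionary that the paper outsources to the cited exercise.
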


\begin{proof}
	We prove that \ref{part:when_is_submetrisable_1}~$\Rightarrow$~\ref{part:when_is_submetrisable_2}. Take $x\in E$. Using \cref{res:metrisation_theorem}, we see that there exists a translation invariant metric $d_x$ on $B_x$ that defines a topology on $B_x$ coarser than $\tau\vert_{B_x}$, and such that $d_x(0,y_1)\leq d_x(0,y_2)$ whenever $y_1,y_2\in B_x$ are such that $\abs{y_1}\leq\abs{y_2}$. Define $\rho_x\colon E\to{\mathbb R}$ by setting $\rho_x(z)\coloneqq d_x(0,\abs{x}\wedge\abs{z})$ for $z\in E$. Then $\rho_x$ is a $\tau$-continuous Riesz pseudonorm on~$E$ in the sense of \cite[Definition~2.27]{aliprantis_burkinshaw_LOCALLY_SOLID_RIESZ_SPACES_WITH_APPLICATIONS_TO_ECONOMICS_SECOND_EDITION:2003}. Its null ideal $I_{\rho_x}$, defined as $I_{\rho_x}\coloneqq\{z\in E: \rho_x(z)=0\}$, is clearly equal to $\{x\}^\dc$. Hence $x\in I_{\rho_x}^\dc$. By \cite[Exercise 4.16]{aliprantis_burkinshaw_LOCALLY_SOLID_RIESZ_SPACES_WITH_APPLICATIONS_TO_ECONOMICS_SECOND_EDITION:2003}, this implies that $x\in C_\tau$.
	
	We prove that \ref{part:when_is_submetrisable_2}~$\Rightarrow$~\ref{part:when_is_submetrisable_1}. Take $x\in E=C_\tau$. By \cite[Exercise~4.16]{aliprantis_burkinshaw_LOCALLY_SOLID_RIESZ_SPACES_WITH_APPLICATIONS_TO_ECONOMICS_SECOND_EDITION:2003}, there exists a $\tau$-continuous Riesz pseudonorm $\rho_x$ on $E$ such that $x\in I_{\rho_x}^\dc$. Define $d_x:B_x\times B_x\to{\mathbb R}$ by setting $d_x(y_1,y_2)\coloneqq \rho_x(\abs{y_1-y_2})$ for $y_1,y_2\in B_x$. Then $d_x$ is a metric on $B_x$ which defines a locally solid topology on $B_x$ that is coarser than $\tau\vert_{B_x}$.  Hence $\tau$ is \locsumet.		
\end{proof}

We conclude this section with a small result which is an immediate consequence of the fact that every order interval in a locally solid vector lattice is closed in the topology.

\begin{proposition}\label{res:metrisable_is_OK}
	Let $(E,\tau)$ be a locally solid vector lattice such that~$\tau$ is metrisable and complete. Then~$\tau$ is \locsucommet
\end{proposition}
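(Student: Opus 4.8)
The plan is to verify the defining conditions of part~\ref{part:l.c.m.t._3} of \cref{def:l.c.m.t.} for the simplest possible choice, namely $\widetilde\tau_x\coloneqq\tau\vert_{B_x}$ for each $x\in E^+$. First I would note that this $\widetilde\tau_x$ is automatically a metrisable locally solid topology on~$B_x$. Since~$\tau$ is metrisable, zero has a countable $\tau$-neighbourhood basis in~$E$, and intersecting its members with~$B_x$ yields a countable $\tau\vert_{B_x}$-neighbourhood basis at zero in~$B_x$, so $\tau\vert_{B_x}$ is metrisable. Since~$B_x$ is a band, hence a vector sublattice, the intersections of solid $\tau$-neighbourhoods of zero with~$B_x$ are solid in~$B_x$, so $\tau\vert_{B_x}$ is locally solid. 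With $\widetilde\tau_x=\tau\vert_{B_x}$, condition~\ref{part:l.c.m.t._3_a} holds trivially, because a topology is finer than itself.

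The substance lies in condition~\ref{part:l.c.m.t._3_b}, and here I would use the completeness of~$\tau$ together with the closedness of order intervals. Let $(y_n)_{n\geq 1}$ be a $\widetilde\tau_x$-Cauchy sequence in $[0,x]$. Regarded as a sequence in~$E$, it is $\tau$-Cauchy, so by the completeness of~$\tau$ it $\tau$-converges to some $y\in E$. Because every order interval in a locally solid vector lattice is $\tau$-closed, and the~$y_n$ all lie in the $\tau$-closed set $[0,x]$, the limit~$y$ belongs to $[0,x]$; in particular $y\in B_x$. Since the~$y_n$ and their limit~$y$ all lie in~$B_x$, the convergence $y_n\tauconv y$ in~$E$ is the same as $\widetilde\tau_x$-convergence in~$B_x$. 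Thus $(y_n)_{n\geq 1}$ is $\widetilde\tau_x$-convergent to the element~$y$ of $[0,x]$, which is precisely~\ref{part:l.c.m.t._3_b}.

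I do not anticipate a genuine obstacle: the argument is a direct unwinding of the definitions, and its only two inputs---that a $\tau$-Cauchy sequence converges, and that order intervals are $\tau$-closed---are exactly the completeness hypothesis and the standard fact recalled just above the statement. The one point worth keeping in mind is that completeness of~$\tau$ need not survive passage to a strictly coarser topology, which is why taking~$\widetilde\tau_x$ equal to $\tau\vert_{B_x}$, rather than some coarser metrisable topology, is the natural choice; this is the same subtlety noted around \cref{res:redundancy} regarding the role of completeness in part~\ref{part:l.c.m.t._3} of \cref{def:l.c.m.t.}.
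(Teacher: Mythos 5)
Your proof is correct and matches the paper's intended argument: the paper gives no formal proof but states that the proposition is an immediate consequence of the $\tau$-closedness of order intervals, which is exactly the key fact you use, with $\widetilde\tau_x=\tau\vert_{B_x}$ being the evident choice. Your remark on why one must not pass to a coarser topology here is also consistent with the paper's discussion of why the redundancy in parts~\ref{part:l.c.m.t._1} and~\ref{part:l.c.m.t._2} of \cref{def:l.c.m.t.} is absent from part~\ref{part:l.c.m.t._3}.
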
	

\subsection{Norm, unbounded norm, o-Lebesgue, and Fatou topologies}\label{subsec:norm_unbounded_norm_lebesgue_and_fatou}

In this section, we consider a number of particular locally solid topologies and study when they are locally (interval complete) solidly submetrisable. We start with the norm and unbounded norm topology on a Banach lattice.

\begin{proposition}\label{res:un_is_l.c.sub}
	Let~$E$ be a Banach lattice. Then the norm topology and the unbounded norm topology on~$E$ are \locsucommet.
\end{proposition}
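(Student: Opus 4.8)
The plan is to treat the two topologies separately: the norm topology is essentially immediate, while the unbounded norm topology calls for a hands-on construction of a suitable coarser metric on each band.

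For the norm topology $\tau$ on $E$, I would simply invoke \cref{res:metrisable_is_OK}. Indeed, the norm topology of a Banach lattice is locally solid, it is metrisable by the norm metric, and it is complete because $E$ is by definition norm-complete; hence $\tau$ is \locsucommet\ at once. The substance therefore lies in the unbounded norm topology $\unb\tau$, which is locally solid (as the unbounded topology $\unb_E\tau$ generated by $\tau$) but in general neither metrisable nor complete. Fix $x\in E^+$. For the required coarser topology $\widetilde\tau_x$ on $B_x$ I would use the single Riesz pseudonorm
\[
\rho_x(z)\coloneqq\norm{\abs{z}\wedge x}\qquad(z\in B_x).
\]
First I would check that $\rho_x$ is genuinely a Riesz norm on $B_x$: monotonicity (hence solidity of its balls) is clear since $\abs{z_1}\leq\abs{z_2}$ forces $\abs{z_1}\wedge x\leq\abs{z_2}\wedge x$; subadditivity follows from $\abs{z_1+z_2}\wedge x\leq\abs{z_1}\wedge x+\abs{z_2}\wedge x$ together with the monotonicity of the lattice norm; scalar continuity is clear from $\rho_x(\lambda z)\leq\norm{\abs{\lambda z}}=\abs{\lambda}\norm{z}\to0$; and nondegeneracy holds because $\rho_x(z)=0$ means $z\perp x$, which for $z\in B_x=\{x\}^{\dc\dc}$ forces $z=0$. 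Thus $\widetilde d_x(z_1,z_2)\coloneqq\rho_x(z_1-z_2)$ is a translation-invariant metric on $B_x$ whose balls are solid, defining the desired metrisable locally solid topology $\widetilde\tau_x$.

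It then remains to verify the two defining conditions. For \ref{part:l.c.m.t._3_a}, I would note that the $\rho_x$-balls are precisely the sets $U_{V,x}\cap B_x$ with $V$ a norm ball, hence $\unb\tau$-neighbourhoods of zero; so $\unb\tau\vert_{B_x}$ is finer than $\widetilde\tau_x$. For \ref{part:l.c.m.t._3_b}, the key point is that on the interval $[0,x]$ the truncation is invisible: if $z_1,z_2\in[0,x]$, then $z_1-z_2\in[-x,x]$, so $\abs{z_1-z_2}\wedge x=\abs{z_1-z_2}$ and therefore $\widetilde d_x(z_1,z_2)=\norm{z_1-z_2}$. Consequently a $\widetilde\tau_x$-Cauchy sequence in $[0,x]$ is norm-Cauchy, hence norm-convergent to some $y\in E$ by completeness of $E$; since order intervals are closed in the locally solid norm topology, $y\in[0,x]$, and within $[0,x]$ norm convergence to $y$ coincides with $\widetilde\tau_x$-convergence. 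This yields \ref{part:l.c.m.t._3_b} and completes the argument.

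The one genuinely delicate point is the choice $y=x$ in the pseudonorm: it is exactly this choice that makes the truncation trivial on $[0,x]$ and thereby transfers the Banach-space completeness to $\widetilde\tau_x$-completeness of $[0,x]$. I expect the main conceptual obstacle to be recognising that one must \emph{not} ask for more: larger intervals such as $[0,2x]$ need not be $\widetilde\tau_x$-complete, since there the truncation $\abs{z_1-z_2}\wedge x$ genuinely weakens the metric. This is precisely why the definition of \locsucommet\ requires completeness only of the single interval $[0,x]$, and it aligns with the warning recorded in \cref{rem:invariant}.
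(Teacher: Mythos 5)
Your proof is correct and follows essentially the same route as the paper's: the norm topology is dispatched via \cref{res:metrisable_is_OK}, and for the unbounded norm topology one equips each band $B_x$ with the truncated metric $\left\Vert\abs{z_1-z_2}\wedge x\right\Vert$, observes that it is coarser than $\unb\tau\vert_{B_x}$ by the very definition of the $U_{V,x}$, and that it restricts to the norm metric on $[0,x]$, whence completeness of that interval. The only (immaterial) difference is that the paper truncates at $2x$ rather than $x$ (and note that your $\rho_x$ is a Riesz \emph{pseudonorm} rather than a norm, since it is not homogeneous --- but the properties you actually verify are exactly the ones needed).
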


\begin{proof}
	We denote the norm topology on~$E$ by~$\tau$. It is clear from \cref{res:metrisable_is_OK} that~$\tau$ is \locsucommet. We turn to the locally solid unbounded norm topology~$\unb\tau$. Take $x\in\pos{E}$, and define $d_x\colon B_x\times B_x\to[0,\infty)$ by setting
	\[
	d_x\left(x_1,x_2\right)\coloneqq\left\Vert\abs{x_1-x_2}\wedge (2x)\right\Vert
	\]
	for $x_1, x_2\in B_x$. As in the proof of \cite[Theorem~3.2]{kandic_marabeh_troitsky:2017}, it is easily seen that~$d_x$ is a metric on~$B_x$. The facts that~$d_x$ is translation invariant and that the open balls $\left\{y\in B_x: d_x\left(0,y\right)<r\right\}$ for $r>0$ are solid make it simple to verify that these open balls verify the conditions in \cite[Theorem~5.1]{kelley_namioka_LINEAR_TOPOLOGICAL_SPACES_SECOND_CORRECTED_PRINTING:1976} to be a neighbourhood basis at zero for a linear topology~$\widetilde\tau_x$ on~$B_x$, which is then obviously locally solid. It is immediate from the definition of the $\unb\tau$-topology that $\unb\tau\vert_{B_x}$ is finer than~$\widetilde\tau_x$. Finally, the fact that $d_x\left(x_1,x_2\right)=\norm{x_1-x_2}$ for $x_1,x_2\in[0,x]$ makes it clear that $([0,x],d_x)$ is a complete metric space.
\end{proof}

\begin{remark}\label{rem:proof_for_un-topology}
	The proof for the unbounded norm topology can easily be modified to yield a more general result: for $x\in\pos{E}$ and $y,z\in I_x$ with $y\leq z$, there exists a metric $d_{y,z}$ on~$B_x$ that is coarser than~$\tau\vert_{B_x}$ and such that $([y,z], d_{y,z})$ is a complete metric space. Indeed, after choosing $\lambda\geq 0$ such that $[y,z]\subseteq[-\lambda x,\lambda x]$, defining $d_{y,z}\left(x_1,x_2\right)\coloneqq\left\Vert\abs{x_1-x_2}\wedge (2\lambda x)\right\Vert$ for $x_1,x_2\in B_x$ yields such a metric. There does not appear to be an obvious way (if it is at all possible) to produce a metric on~$B_x$ that induces a locally solid topology on~$B_x$ which is coarser than $\unb\tau\vert_{B_x}$, and such that \emph{all} order intervals in~$I_x$ are complete metric spaces, let alone all order intervals in~$B_x$.
\end{remark}

We now turn to o-Lebesgue and Fatou topologies.

If $(E,\tau)$ is a locally solid vector lattice, where~$\tau$ is a Fatou topology, then we know from   \cite[Theorem~4.12]{aliprantis_burkinshaw_LOCALLY_SOLID_RIESZ_SPACES_WITH_APPLICATIONS_TO_ECONOMICS_SECOND_EDITION:2003} that there exists a unique Fatou topology~$\tau^\delta$ on the Dedekind completion~$E^\delta$ of~$E$ extending~$\tau$. When~$\tau$ is an o-Lebesgue topology, so is~$\tau^\delta$.  An inspection of the proof shows that~$\tau^\delta$ is metrisable when~$\tau$ is. Furthermore, if~$\tau_1$ and~$\tau_2$ are Fatou topologies on~$E$ such that~$\tau_1\subseteq\tau_2$, then $\tau_1^\delta\subseteq\tau_2^\delta$. These two observations are used in the proof of the following.

\begin{theorem}\label{res:when_fatou_is_submetrisable_dedekind}
	Let $(E,\tau)$ be a locally solid vector lattice, where~$\tau$ is a Fatou topology. Let~$\tau^\delta$ be the extension of~$\tau$ to a Fatou topology on the Dedekind completion~$E^\delta$ of~$E$. The following are equivalent:
	\begin{enumerate}
		\item\label{dedekind1}
		$\tau$ is \locsumet;
		
		\item\label{dedekind4}
		$\tau^\delta$ is \locsumet;
		
		\item\label{dedekind5}
		$\tau^\delta$ is \locsucommet;
		
		\item\label{dedekind2}
		$C_\tau=E$;
		
		\item\label{dedekind3}
		$C_{\tau^\delta}=E^\delta$.
	\end{enumerate}
When~$\tau$ is an o-Lebesgue topology, then \ref{dedekind1}--\ref{dedekind3} are also equivalent to:
\begin{enumerate}[resume]
	
	\item\label{dedekind6}
	$E$ has the countable sup property.
	
\end{enumerate}
\end{theorem}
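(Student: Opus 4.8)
The plan is to run every equivalence through the carrier. First, \cref{res:when_is_submetrisable}, applied to $(E,\tau)$ and to $(E^\delta,\tau^\delta)$, gives $\ref{dedekind1}\Leftrightarrow\ref{dedekind2}$ and $\ref{dedekind4}\Leftrightarrow\ref{dedekind3}$, so it remains to link the two carriers, to upgrade to the interval completeness of \ref{dedekind5}, and to bring in the countable sup property. For $\ref{dedekind3}\Rightarrow\ref{dedekind2}$ I would take $x\in E$ together with a normal sequence $(W_n)_{n\ge1}$ of solid $\tau^\delta$-neighbourhoods of zero with $x\in\big(\bigcap_n W_n\big)^\dc$, and restrict: $(W_n\cap E)_{n\ge1}$ is a normal sequence of solid $\tau$-neighbourhoods and $x$ is still disjoint from $\bigcap_n(W_n\cap E)\subseteq\bigcap_n W_n$, so $x\in C_\tau$. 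For the converse $\ref{dedekind2}\Rightarrow\ref{dedekind3}$ I would fix $y\in (E^\delta)^+$, use that $E$ is majorising to choose $x\in E^+$ with $y\le x$, and recall that $I_x^\delta$ is the Dedekind completion of $I_x$; from $C_\tau=E$ and \cref{res:tau_I_x_B_x} the topology $\tau\vert_{I_x}$ is \sumet, its coarser metrisable topology may be taken Fatou by \cref{res:can_be_lebesgue_fatou}, and the metrisable Fatou extension of the latter to $I_x^\delta$ is coarser than $\tau^\delta\vert_{I_x^\delta}=(\tau\vert_{I_x})^\delta$ by the uniqueness and monotonicity of the Fatou extension. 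Restricting this extension to $I_y^\delta$ and applying \cref{res:tau_I_x_B_x} inside $E^\delta$ yields $y\in C_{\tau^\delta}$.

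Next, $\ref{dedekind5}\Rightarrow\ref{dedekind4}$ is the trivial implication that a \locsucommet\ topology is \locsumet, so the only substantial point among $\ref{dedekind1}$--$\ref{dedekind3}$ is $\ref{dedekind3}\Rightarrow\ref{dedekind5}$. Here I would fix $x\in(E^\delta)^+$ and, using $x\in C_{\tau^\delta}$ together with the Fatou property of $\tau^\delta$, produce a $\tau^\delta$-continuous \emph{Fatou} Riesz pseudonorm $\rho_x$ with null ideal $\{x\}^\dc$; then $d_x(z_1,z_2)\coloneqq\rho_x(\abs{z_1-z_2})$ is a metric on $B_x^\delta$ defining a locally solid topology $\widetilde\tau_x$ coarser than $\tau^\delta\vert_{B_x^\delta}$. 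Completeness of $[0,x]$ is the heart of the matter: given a $d_x$-Cauchy sequence in $[0,x]$, I would pass to a subsequence $(y_k)$ with $\rho_x(\abs{y_{k+1}-y_k})\le 2^{-k}$, form $u_k\coloneqq\bigvee_{j\ge k}y_j$ and $\ell_k\coloneqq\bigwedge_{j\ge k}y_j$ in $[0,x]$ by Dedekind completeness, with $u_k\downarrow u$ and $\ell_k\uparrow \ell$. Writing $u_k-\ell_k$ as the increasing limit of the finite oscillations $\bigvee_{k\le i,j\le N}\abs{y_i-y_j}$, the Fatou property of $\rho_x$ gives $\rho_x(u_k-\ell_k)\le 2^{-k+1}$, whence $\rho_x(u-\ell)=0$, so $u=\ell$, and finally $\abs{y_k-u}\le u_k-\ell_k$ shows $y_k\to u$ in $\widetilde\tau_x$. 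This settles the equivalence of $\ref{dedekind1}$--$\ref{dedekind3}$.

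It remains to incorporate \ref{dedekind6} when $\tau$, hence $\tau^\delta$, is o-Lebesgue. For $\ref{dedekind2}\Rightarrow\ref{dedekind6}$, let $A$ be upward directed with $\sup A=u$; after a shift assume $A\subseteq[0,u]$. Since $A\uparrow u$ and $\tau$ is o-Lebesgue, $a\tauconv u$ along $A$, hence $a\to u$ in the coarser metric $d_u$ on $B_u$ furnished by $C_\tau=E$, and directedness lets me extract an increasing sequence $b_1\le b_2\le\cdots$ in $A$ with $b_k\to u$ in $d_u$. I claim $\sup_k b_k=u$: if $v$ is an upper bound of the $b_k$, then $v\ge b_1\ge 0$, and assuming $c\coloneqq(u-v)^+>0$ we have $0<c\le u$, so $c\in B_u$ and $u-b_k\ge c$, making $(u-b_k)\wedge c$ the constant $c$; but $u-b_k\to0$ in the locally solid topology $d_u$ forces $(u-b_k)\wedge c\to0$, so the constant $c$ is $d_u$-null and $c=0$, a contradiction. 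Thus $\{b_k\}$ is a countable subset of $A$ with supremum $u$. For $\ref{dedekind6}\Rightarrow\ref{dedekind3}$ I would pass to $E^\delta$, which inherits the countable sup property (a standard feature of Dedekind completions), and work in the Dedekind complete o-Lebesgue space $(E^\delta,\tau^\delta)$: for $x\in(E^\delta)^+$ the null ideals of the $\tau^\delta$-continuous Lebesgue pseudonorms are bands with intersection $\{0\}$, and a maximal disjoint family in $[0,x]$ of positive elements each lying in one such null-band complement is countable by the countable sup property; combining the corresponding countably many pseudonorms into a single $\rho$ gives a $\tau^\delta$-continuous Riesz pseudonorm with $x\in I_\rho^\dc$, hence $x\in C_{\tau^\delta}$.

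The main obstacle is the direction $\ref{dedekind6}\Rightarrow\ref{dedekind3}$. Both the transfer of the countable sup property to $E^\delta$ and, more essentially, the conversion of that property into a genuine countable separating family of continuous pseudonorms depend on Dedekind completeness, through the countability of order-bounded disjoint systems; this is exactly why this half is routed through $E^\delta$ rather than carried out in $E$ directly. By contrast, $\ref{dedekind2}\Rightarrow\ref{dedekind6}$ and the completeness argument for \ref{dedekind5} are essentially self-contained, the only external ingredient being the availability of a Fatou pseudonorm with the prescribed null ideal.
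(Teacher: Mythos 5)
Your proof is correct and its overall architecture coincides with the paper's: both route \ref{dedekind1}$\Leftrightarrow$\ref{dedekind2} and \ref{dedekind4}$\Leftrightarrow$\ref{dedekind3} through \cref{res:when_is_submetrisable}, and both pass from $E$ to $E^\delta$ via the majorising property, the identification of $I_x^\delta$ as the Dedekind completion of $I_x$, the uniqueness and monotonicity of the Fatou extension, and \cref{res:tau_I_x_B_x}. The difference is that you reprove from scratch three links that the paper simply cites from Aliprantis--Burkinshaw. For \ref{dedekind3}$\Rightarrow$\ref{dedekind2} you restrict a normal sequence of solid $\tau^\delta$-neighbourhoods to $E$, where the paper invokes the identity $C_\tau=C_{\tau^\delta}\cap E$ (their Exercise~4.5); your restriction argument is a correct direct proof of the inclusion you need. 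For \ref{dedekind4}$\Rightarrow$\ref{dedekind5} the paper combines \cref{res:can_be_lebesgue_fatou} with their Theorem~4.28 (order intervals in a Dedekind complete Fatou space are topologically complete), whereas you reprove that completeness via the oscillation estimate on $u_k-\ell_k$; the argument is sound, but it rests on the unproved claim that $x\in C_{\tau^\delta}$ yields a $\tau^\delta$-continuous \emph{Fatou} Riesz pseudonorm with null ideal exactly $\{x\}^\dc$ --- true, but it needs the characterisation of Fatou topologies by families of Fatou pseudonorms plus a domination-and-summation step, which you should make explicit; the paper's citation sidesteps this entirely. Finally, for \ref{dedekind2}$\Leftrightarrow$\ref{dedekind6} the paper cites their Theorem~4.26; your direct proof of \ref{dedekind2}$\Rightarrow$\ref{dedekind6} (extracting an increasing sequence $d_u$-convergent to $u$ and using local solidness to kill $(u-v)^+$) is complete and rather elegant, while your \ref{dedekind6}$\Rightarrow$\ref{dedekind3} is only a sketch of the standard maximal-disjoint-system argument and would need the details (maximality forces the chosen disjoint family to be an order basis of $B_x$, and the countably many pseudonorms must be combined into one with $x\in I_\rho^\dc$) written out to stand on its own. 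In short: same skeleton, more self-contained, at the cost of two spots where standard but nontrivial facts are asserted rather than proved.
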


\begin{proof}
	We see from \cref{res:when_is_submetrisable} that \ref{dedekind1}~$\Leftrightarrow$~\ref{dedekind2} and that \ref{dedekind4}~$\Leftrightarrow$~\ref{dedekind3}. It is clear that \ref{dedekind5}~$\Rightarrow$~\ref{dedekind4}. The combination of \cref{res:can_be_lebesgue_fatou} and  \cite[Theorem~4.28]{aliprantis_burkinshaw_LOCALLY_SOLID_RIESZ_SPACES_WITH_APPLICATIONS_TO_ECONOMICS_SECOND_EDITION:2003} shows that \ref{dedekind4}~$\Rightarrow$~\ref{dedekind5}. The proof of the equivalence of the first five properties will be complete once we show that \ref{dedekind1}~$\Rightarrow$~\ref{dedekind4} and that \ref{dedekind3}~$\Rightarrow$~\ref{dedekind2}.
	
	We prove that \ref{dedekind1}~$\Rightarrow$~\ref{dedekind4}.
	The first step is to show that $\tau^\delta|_{I_x^\delta}$ is \sumet\ for $x\in E$, as follows. We see from \cref{res:can_be_lebesgue_fatou} that there exists a metrisable Fatou topology~$\widetilde\tau_x$ on~$I_x$ such that $\widetilde\tau_x\subseteq \tau|_{I_x}$. Since~$I_x^\delta$ is the Dedekind completion of~$I_x$, there exists a unique Fatou topology $\left(\widetilde\tau_x\right)^\delta$ (resp.\ $\left(\tau|_{I_x}\right)^\delta$) on~$I_x^\delta$ that extends~$\widetilde\tau_x$ (resp.~$\tau|_{I_x}$). From the observations prior to the  theorem, we see that $\left(\widetilde\tau_x\right)^\delta$ is metrisable and that $\left(\widetilde\tau_x\right)^\delta\subseteq \left(\tau|_{I_x}\right)^\delta$. Now we note that $\tau^\delta|_{I_x^\delta}$ is also a Fatou topology on~$I_x^\delta$, and that $\left(\tau^\delta|_{I_x^\delta}\right)|_{I_x}=\tau^\delta|_{I_x}=\left(\tau^\delta|_E\right)|_{I_x}=\tau|_{I_x}$. Since there is only one Fatou topology on~$I_x^\delta$ extending~$\tau|_{I_x}$, we conclude that $\left(\tau|_{I_x}\right)^\delta=\tau^\delta|_{I_x^\delta}$. Hence $\left(\widetilde\tau_x\right)^\delta\subseteq \tau^\delta|_{I_x^\delta}$, which shows that $\tau^\delta|_{I_x^\delta}$ is \sumet, as desired.
	For the second step, take $x\in E^\delta$. Since~$E$ is majorising in~$E^\delta$, there exists an $y\in E$ such that $I_x^\delta\subseteq I_y^\delta$. As we know that $\tau^\delta|_{I_y^\delta}$ is \sumet, the same is then clearly true for $\tau^\delta_{I_x^\delta}$. \cref{res:tau_I_x_B_x} then yields that~$\tau^\delta$ is \sumet.
	
	We prove that \ref{dedekind3}~$\Rightarrow$~\ref{dedekind2}.  Using \cite[Exercise~4.5]{aliprantis_burkinshaw_LOCALLY_SOLID_RIESZ_SPACES_WITH_APPLICATIONS_TO_ECONOMICS_SECOND_EDITION:2003} in the first step,  we have
	\[
	C_\tau=C_{\tau^\delta}\cap E=E^\delta\cap E=E,
	\]
	as desired.
	
	Hence the first five properties are equivalent.
	
	When~$\tau$ is an o-Lebesgue topology, the equivalence of~\ref{dedekind2} and~\ref{dedekind6} is a part of \cite[Theorem~4.26]{aliprantis_burkinshaw_LOCALLY_SOLID_RIESZ_SPACES_WITH_APPLICATIONS_TO_ECONOMICS_SECOND_EDITION:2003}.
\end{proof}

\begin{remark}\label{rem:fatou_topology_major_theorem}\quad
	\begin{enumerate}
		
		\item\label{rem:fatou_topology_major_theorem_1}
		The equivalence of all parts \ref{dedekind1}--\ref{dedekind6} in \cref{res:when_fatou_is_submetrisable_dedekind} does not hold for arbitrary Fatou topologies, not even when the vector lattice is Dedekind complete. Indeed, if~$A$ is an uncountable set, then, in view of \cref{res:metrisable_is_OK}, the (Fatou) supremum norm topology on $\ell^\infty(A)$ is \locsucommet,  but $\ell^\infty(A)$ does not have the countable sup property.
		
		\item\label{rem:fatou_topology_major_theorem_2}
		 In view of \cref{res:redundancy}, it follows from \cite[Theorem~5.33]{aliprantis_burkinshaw_LOCALLY_SOLID_RIESZ_SPACES_WITH_APPLICATIONS_TO_ECONOMICS_SECOND_EDITION:2003} that a vector lattice has the countable sup property when it admits a submetrisable o-Lebesgue topology. This can now be improved: as a consequence of \cref{res:when_fatou_is_submetrisable_dedekind}, \cref{res:tau_I_x_B_x}, and \cref{res:redundancy}, a vector lattice has the countable sup property when it admits an o-Lebesgue topology such that its restrictions to all principal ideals are submetrisable.		
	\end{enumerate}
	
\end{remark}

\begin{example}
	Let~$A$ be a non-empty set. We provide the Dedekind complete vector lattice $\ell^\infty(A)$ with the o-Lebesgue topology~$\tau$ that is generated by the family of lattice seminorms~$p_a$ for $a\in A$, defined by setting $p_a(x)\coloneqq\abs{x(a)}$ for $x\in\ell^\infty(A)$. Then \cref{res:when_fatou_is_submetrisable_dedekind} shows that this topology of pointwise convergence is locally (interval complete) solidly submetrisable if and only if $\ell^\infty(A)$ has the countable sup property, i.e., if and only if~$A$ is countable.
\end{example}

In view of \cref{res:when_fatou_is_submetrisable_dedekind} and, in particular, of its role as a stepping stone for \cref{res:subsequence} in the current paper, it is desirable to have sufficient conditions for a Fatou topology~$\tau$ to have full carrier~$C_\tau$. This is evidently so when~$\tau$ is metrisable, and also when the vector lattice has the countable sup property (see \cite[Theorem~4.17]{aliprantis_burkinshaw_LOCALLY_SOLID_RIESZ_SPACES_WITH_APPLICATIONS_TO_ECONOMICS_SECOND_EDITION:2003}). We shall now proceed to show that a Fatou topology~$\tau$ has full carrier when~$C_\tau$ has a countable order basis. We need the following preparatory result, for which we recall that a $\sigma$-Dedekind complete vector lattice has the principal projection property; see \cite[Theorem~25.1]{luxemburg_zaanen_RIESZ_SPACES_VOLUME_I:1971}, for example.

\begin{lemma}\label{res:countable_order_basis_and_sigma-Dedekind_complete}
	Let~$E$ be a $\sigma$-Dedekind complete vector lattice with a countable order basis $\left\{e_n:n\geq 1\right\}$ such that $0\leq e_1\leq e_2\leq \dotsc$. Denote the corresponding order projections onto the principal bands generated by the~$e_n$ by~$P_{e_n}$.  Then $x=\sup_{n\geq 1} P_{e_n}x$ for $x\in \pos{E}$.
\end{lemma}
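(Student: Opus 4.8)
The plan is to use $\sigma$-Dedekind completeness to build the candidate supremum and then to show that its defect is disjoint from the order basis, hence zero. First I would record that, because $0\leq e_1\leq e_2\leq\dotsc$, the principal bands are nested, $B_{e_1}\subseteq B_{e_2}\subseteq\dotsc$, so that for each fixed $x\in\pos{E}$ the band projections give an increasing sequence $0\leq P_{e_1}x\leq P_{e_2}x\leq\dotsc\leq x$ (each $P_{e_n}$ being a positive band projection with $P_{e_n}x\leq x$). Since $E$ is $\sigma$-Dedekind complete, this countable, order bounded set admits a supremum $s\coloneqq\sup_{n\geq1}P_{e_n}x$, and evidently $0\leq s\leq x$.

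The heart of the argument is then to show that the defect $w\coloneqq x-s\geq 0$ is disjoint from every generator. Fix $m$. As $s\geq P_{e_m}x$, we have $0\leq w\leq x-P_{e_m}x$, and the element $x-P_{e_m}x$ is precisely the component of $x$ in the disjoint complement band $B_{e_m}^\dc$, so that $x-P_{e_m}x$ is disjoint from $e_m$. Because disjointness is inherited by elements dominated in absolute value, $0\leq w\leq x-P_{e_m}x$ forces $w\wedge e_m=0$, i.e., $w\perp e_m$.

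Since this holds for every $m$, I conclude that $w\in\{e_m:m\geq 1\}^\dc$; as $\{e_n:n\geq 1\}$ is an order basis, this disjoint complement is $\{0\}$, whence $w=0$ and $x=s=\sup_{n\geq 1}P_{e_n}x$. The only step requiring any care is the disjointness claim for $w$, namely the passage from $0\leq w\leq x-P_{e_m}x\perp e_m$ to $w\perp e_m$; but this uses nothing beyond the fact that the complementary band projection lands in $B_{e_m}^\dc$ together with the solidity of disjointness, so I anticipate no real obstacle. Everything else is immediate from $\sigma$-Dedekind completeness and the definition of an order basis.
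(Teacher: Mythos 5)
Your argument is correct and is essentially identical to the paper's own proof: both take $s=\sup_{n\geq 1}P_{e_n}x$ (which exists by $\sigma$-Dedekind completeness since $0\leq P_{e_n}x\uparrow\,\leq x$), observe that $0\leq x-s\leq x-P_{e_k}x\perp e_k$ for every $k$, and conclude $x-s=0$ because $\{e_n:n\geq 1\}$ is an order basis. No gaps; the disjointness step you flag is exactly the one the paper uses, justified by the solidity of the band $B_{e_k}^\dc$.
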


\begin{proof}
	Take $x\in\pos{E}$. Since the~$e_n$ are increasing, so are the~$P_{e_n}$. As $0\leq P_{e_n} x\uparrow\leq x$ and~$E$ is $\sigma$-Dedekind complete, $\sup_{n\geq 1} P_{e_n} x$ exists. From the fact that $0\leq x-\sup_{n\geq 1} P_{e_n} x\leq x - P_{e_k}x$ for $k\geq 1$, we see that $(x-\sup_{n\geq 1} P_{e_n} x)\perp e_k $ for $k\geq 1$. Hence $x-\sup_{n\geq 1} P_{e_n} x=0$.
\end{proof}	

\begin{proposition}\label{res:C_tau=E_c.o.basis}Let $(E,\tau)$ be a locally solid vector lattice, where~$\tau$ is a Fatou topology. Suppose that~$C_\tau$ has a countable order basis. Then $C_\tau=E$.
\end{proposition}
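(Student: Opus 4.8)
The plan is to amalgamate the countably many ``witnesses'' to the memberships $a_n\in C_\tau$ into a single normal sequence whose associated band lies inside $C_\tau$ and contains every $a_n$, and then to argue that this band is forced to be all of~$E$. Replacing each $a_n$ by $\abs{a_n}$, I may assume $a_n\geq 0$, and I write $\{a_n:n\geq 1\}$ for the given countable order basis of~$C_\tau$.

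The substantive step, and the one where the Fatou hypothesis enters, is to show that $C_\tau$ is \emph{order dense} in~$E$. Given $x>0$, Hausdorffness provides a solid $\tau$-neighbourhood of zero omitting~$x$, and since $\tau$ is Fatou I may take inside it an order closed solid neighbourhood $V_1$ with $x\notin V_1$; choosing order closed solid neighbourhoods at each further step, exactly as in the proof of \cref{res:redundancy}, I obtain a normal sequence $(V_k)_{k\geq 1}$ of order closed solid neighbourhoods. Its intersection $N\coloneqq\bigcap_k V_k$ is then an order closed ideal, hence a band, and $x\notin N$ since $N\subseteq V_1$. Because $N=N^{\dc\dc}$ for a band, $x\notin N$ forces $[0,x]\cap N^\dc\neq\{0\}$: if $x\wedge w=0$ for every $w\geq 0$ in $N^\dc$, then $x\perp N^\dc$, i.e.\ $x\in N^{\dc\dc}=N$, a contradiction. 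Thus there is $z$ with $0<z\leq x$ and $z\in N^\dc\subseteq C_\tau$, proving order density. It follows at once that the order basis $\{a_n\}$ of~$C_\tau$ is in fact an order basis of~$E$: if $0<y\in\{a_n\}^\dc$, then order density yields $0<z\leq y$ with $z\in C_\tau$, whence $z\in\{a_n\}^\dc\cap C_\tau=\{0\}$, which is absurd; hence $\{a_n\}^\dc=\{0\}$.

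It remains to combine the witnesses. For each~$n$, the definition of the carrier provides a normal sequence $(V_k^{(n)})_{k\geq 1}$ of solid $\tau$-neighbourhoods with $a_n\in N_n^\dc$, where $N_n\coloneqq\bigcap_k V_k^{(n)}$. Setting $W_k\coloneqq\bigcap_{m=1}^{k}V_k^{(m)}$ gives, by a direct check using normality and the fact that each sequence $(V_k^{(m)})_k$ is decreasing, a normal sequence $(W_k)_{k\geq 1}$ of solid neighbourhoods with $\bigcap_k W_k=\bigcap_m N_m$. Hence the band $B\coloneqq\bigl(\bigcap_k W_k\bigr)^\dc$ satisfies $B\subseteq C_\tau$, while $a_m\in N_m^\dc\subseteq B$ for every~$m$. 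Being a band that contains the order basis $\{a_n\}$ of~$E$, $B$ contains $\{a_n\}^{\dc\dc}=\{0\}^\dc=E$, so $B=E$ and therefore $C_\tau=E$.

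I expect the order density step to be the main obstacle, since it is where one must exploit that $\tau$ is Fatou rather than merely locally solid: Fatou-ness is precisely what makes the intersection $N$ order closed, hence a band, so that $x\notin N$ produces a nonzero component of~$x$ disjoint from~$N$; everything afterwards is formal. I note that the just-proved \cref{res:countable_order_basis_and_sigma-Dedekind_complete} gives an alternative route to the final step: after passing to the Dedekind completion one replaces the~$a_n$ by $e_n\coloneqq a_1\vee\dotsb\vee a_n$ and writes each positive~$x$ as $\sup_n P_{e_n}x$, with each term lying in the band~$B$, so that order closedness of~$B$ yields $x\in B$. Since that argument still requires the same amalgamation into a single band, I would prefer the direct reasoning above.
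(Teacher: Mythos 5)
Your proof is correct, but it takes a genuinely different route from the paper's. The paper first treats the $\sigma$-Dedekind complete case: it cites the fact that the carrier of a Fatou topology is an order dense $\sigma$-ideal, upgrades the order basis of $C_\tau$ to one of $E$ by order density, and then uses the principal band projections together with \cref{res:countable_order_basis_and_sigma-Dedekind_complete} to write each $x\in\pos{E}$ as $\sup_n P_{e_n}x$ with every $P_{e_n}x\in C_\tau$, so that the $\sigma$-ideal property of $C_\tau$ gives $x\in C_\tau$; the general case is then handled by passing to the Dedekind completion and using $C_\tau=E\cap C_{\tau^\delta}$. You instead stay inside $E$ throughout: you re-derive the order density of $C_\tau$ from the Fatou property (the intersection of a normal sequence of order closed solid neighbourhoods avoiding a given $x>0$ is a band not containing $x$, so $x$ has a nonzero component in its disjoint complement, which lies in $C_\tau$ by definition), and you replace the projection argument by a direct amalgamation of the countably many normal sequences witnessing $a_n\in C_\tau$ into a single normal sequence whose associated band lies in $C_\tau$ and contains every $a_n$, hence equals $E$. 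The checks you flag --- that $W_{k+1}+W_{k+1}\subseteq W_k$, that $\bigcap_k W_k=\bigcap_m N_m$ because normal sequences decrease, and that a band containing an order basis is all of $E$ --- all go through. What your approach buys is self-containedness: no Dedekind completion, no principal projection property, and no appeal to the $\sigma$-ideal structure of the carrier; the price is that you reprove the order density of the carrier, which the paper simply cites, and that the diagonal-intersection trick, while standard, has to be verified by hand. Both arguments are sound; yours is arguably the more elementary of the two.
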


\begin{proof}
	We first suppose that~$E$ is $\sigma$-Dedekind complete. According to  \cite[Theorem~4.17]{aliprantis_burkinshaw_LOCALLY_SOLID_RIESZ_SPACES_WITH_APPLICATIONS_TO_ECONOMICS_SECOND_EDITION:2003},~$C_\tau$ is an order dense $\sigma$-ideal in~$E$. Let $\left\{e_n: n\geq 1\right\}$ be a countable order basis of~$C_\tau$. By passing to $\left\{\sum_{i=1}^n \abs{e_i} : n\geq 1\right\}$, we may suppose that $0\leq e_1\leq e_2\leq\dotsc$. Since~$C_\tau$ is order dense in~$E$, $\left\{e_n: n\geq 1\right\}$ is also an order basis of~$E$.
	
	Let~$P_{e_n}$ denote the order projections from~$E$ onto the principal bands generated by the~$e_n$. Take $x\in\pos{E}$. Then \cref{res:countable_order_basis_and_sigma-Dedekind_complete} yields that $x=\sup_{n\geq 1} P_{e_n}x$ in~$E$. The formula for principal band projections and the fact that~$C_\tau$ is a $\sigma$-ideal in~$E$ imply that $P_{e_n}x\in C_\tau$. Using once more the fact that~$C_\tau$ is a $\sigma$-ideal in~$E$, we see that $x\in C_\tau$. This concludes the proof when~$E$ is $\sigma$-Dedekind complete.
	
Now we consider arbitrary~$E$, where we let~$\tau^\delta$ denote the unique extension of~$\tau$ to a Fatou topology on~$E^\delta$. Let $\left\{e_n:n\geq 1\right\}$ be a countable order basis of~$C_\tau$. Then $\left\{e_n:n\geq 1 \right\}\subseteq C_\tau\subseteq C_{\tau^\delta}$ because $C_\tau=E\cap C_{\tau^\delta}$ by \cite[Exercise~4.5]{aliprantis_burkinshaw_LOCALLY_SOLID_RIESZ_SPACES_WITH_APPLICATIONS_TO_ECONOMICS_SECOND_EDITION:2003}. Since~$E$ is order dense in~$E^\delta$, and since \cite[Theorem~4.17]{aliprantis_burkinshaw_LOCALLY_SOLID_RIESZ_SPACES_WITH_APPLICATIONS_TO_ECONOMICS_SECOND_EDITION:2003} still shows that~$C_\tau$ is order dense in~$E$,~$C_\tau$ is order dense in~$E^\delta$. Hence $\left\{e_n:n\geq 1 \right\}$ is also an order basis of~$E^\delta$, and then in particular of~$C_\tau^\delta$. The first part of the proof then shows that $C_{\tau^\delta}=E^\delta$, which implies that $C_\tau=E\cap C_{\tau^\delta}=E\cap E^\delta=E$, as required.
\end{proof}

\begin{remark}
For an o-Lebesgue topology~$\tau$, one can do better than \cref{res:C_tau=E_c.o.basis}. As seen from \cite[Lemma~6.8]{kandic_taylor:2018}, the fact that $C_\tau$ has a countable order basis then not only implies that~$\tau$ has full carrier (which is equivalent to~$E$ having the countable sup property), but also that~$E$ itself has a countable order basis.
\end{remark}
	
We combine our sufficient conditions for a Fatou topology to have full carrier with a part of  \cref{res:when_fatou_is_submetrisable_dedekind} in the following result, formulated in a way ready to be combined with \cref{res:subsequence}. 	

\begin{theorem}\label{res:when_fatou_is_locally_interval_complete_submetrisable_countability}
	Let $(E,\tau)$ be a locally solid vector lattice, where~$\tau$ is a Fatou topology. Suppose that $C_\tau=E$, which is certainly the case when~$E$ has the countable sup property, when~$C_\tau$ has a countable order basis, and when~$\tau$ is metrisable. Let $\tau^\delta$ be the extension of~$\tau$ to a Fatou topology on the Dedekind completion~$E^\delta$ of~$E$.  Then~$\tau^\delta$ is \locsucommet.
\end{theorem}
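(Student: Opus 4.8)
The plan is to obtain the conclusion directly from the equivalences already established in \cref{res:when_fatou_is_submetrisable_dedekind}, after first settling the parenthetical sufficient conditions for a full carrier. The substantive hypothesis is $C_\tau=E$, which is exactly part~\ref{dedekind2} of \cref{res:when_fatou_is_submetrisable_dedekind}. Since $\tau$ is a Fatou topology, that theorem applies, and its implication \ref{dedekind2}~$\Rightarrow$~\ref{dedekind5} yields precisely that $\tau^\delta$ is \locsucommet. Thus the whole theorem reduces to verifying that each of the three listed conditions forces $C_\tau=E$.

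I would then dispose of the three sufficient conditions in turn. For metrisability, the conclusion $C_\tau=E$ is immediate from the definition of the carrier, as was already recorded after that definition: a metrisable locally solid $\tau$ admits a normal sequence $(V_n)_{n\geq 1}$ of solid neighbourhoods of zero with $\bigcap_{n\geq 1}V_n=\{0\}$, so the choice $N=\{0\}$ gives $N^\dc=E$ and hence $C_\tau=E$. For the countable sup property, $C_\tau=E$ is a direct citation of \cite[Theorem~4.17]{aliprantis_burkinshaw_LOCALLY_SOLID_RIESZ_SPACES_WITH_APPLICATIONS_TO_ECONOMICS_SECOND_EDITION:2003}. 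For a countable order basis of $C_\tau$, the equality $C_\tau=E$ is exactly the conclusion of \cref{res:C_tau=E_c.o.basis}, which was proved for an arbitrary Fatou topology. Assembling these three observations completes the reduction.

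I do not expect a genuine obstacle here, since all of the analytic work---the passage to the Dedekind completion $E^\delta$, the construction of an interval-complete submetrising topology on the principal bands, and the sufficiency of a countable order basis---has already been carried out in \cref{res:when_fatou_is_submetrisable_dedekind} and \cref{res:C_tau=E_c.o.basis}. The only points demanding care are invoking the correct direction of the equivalence (namely \ref{dedekind2}~$\Rightarrow$~\ref{dedekind5}, rather than one of the others) and making sure each cited sufficiency criterion is applied under the Fatou hypothesis under which it was established. Once these bookkeeping points are respected, the statement follows by concatenation.
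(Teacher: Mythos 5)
Your proposal is correct and follows exactly the route the paper intends: the theorem is presented as an immediate combination of the equivalence $C_\tau=E\Leftrightarrow\tau^\delta$ is \locsucommet\ from \cref{res:when_fatou_is_submetrisable_dedekind} with the three sufficiency criteria for a full carrier (metrisability via the definition of the carrier, the countable sup property via the cited result of Aliprantis--Burkinshaw, and a countable order basis via \cref{res:C_tau=E_c.o.basis}). No further comment is needed.
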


\section{Embedded unbounded order convergent sequences in topologically convergent nets}\label{sec:embedded_unbounded_order_convergent_sequences_in_convergent_nets}

\noindent
The following theorem, which will be combined with the results from \cref{sec:metrisable_and_submetrisable_topologies_on_vector_lattices} in \cref{sec:applications}, is the core result of this paper. Although the statements in its final paragraph will most likely be  sufficient in the majority of applications, we still formulate it in its most precise form.  \cref{res:subsequences_several_convergences} may give evidence that it is worth doing so; see also \cref{rem:flexibility}.

\begin{theorem}\label{res:subsequence}
	Let $(E,\tau)$ be a locally solid vector lattice, where~$\tau$ is \locsucommet, and let~$F$ be a regular vector sublattice of~$E$. Let $(x_\alpha)_{\alpha\in A}$ be a net in~$F$ and let $x\in F$ be such that $x_\alpha\tauconv x$ in~$E$.
	
	Suppose that~$A$ has a largest element $\alpha_{\mathrm{largest}}$. Then $x=x_{\alpha_{\mathrm{largest}}}$.
	
	Suppose that~$A$ has no largest element. In this case, one can choose any $\widetilde\alpha_1\in A$; then find an $\alpha_1\in A$; then choose any $\widetilde\alpha_2\in A$; then find an $\alpha_2\in A$; etc., such that:
	\begin{enumerate}
		\item\label{indices_1}
		$\alpha_1<\alpha_2<\alpha_3<\dotsc$;
		
		\item\label{indices_2}
		$\alpha_n>\widetilde\alpha_n$ for $n\geq 1$;
		
		\item\label{indices_3}
		$x_{\alpha_n}\uoconv x$ as $n\to\infty$ in~$F$.
		\end{enumerate}
	In particular, for arbitrary~$A$, there exist indices $\alpha_1\leq\alpha_2\leq\alpha_3\leq\dotsc$ such that  $x_{\alpha_n}\uoconv x$ in~$F$; if~$A$ has no largest element, then there exist indices $\alpha_1<\alpha_2<\alpha_3<\dotsc$ such that  $x_{\alpha_n}\uoconv x$ in~$F$.
\end{theorem}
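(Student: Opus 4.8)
The plan is to dispose of the case where $A$ has a largest element $\alpha_{\mathrm{largest}}$ at once: a net indexed by a directed set with a largest element is eventually equal to its value at that element, so the Hausdorffness of $\tau$ forces $x=x_{\alpha_{\mathrm{largest}}}$. For the main case I would first reduce to a statement about uo-null sequences. Writing $w_\alpha\coloneqq\abs{x_\alpha-x}$, we have $w_\alpha\tauconv 0$ in~$E$, and $x_{\alpha_n}\uoconv x$ in~$F$ is equivalent to $w_{\alpha_n}\uoconv 0$ in~$F$. Since $F$ is regular in~$E$ and $E$ is order dense, hence regular, in its Dedekind completion $E^\delta$, unbounded order convergence of a sequence from~$F$ is the same whether tested in~$F$, in~$E$, or in~$E^\delta$; so it suffices to produce indices with $w_{\alpha_n}\uoconv 0$ in~$E^\delta$. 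Working in $E^\delta$ is convenient because it is Dedekind complete, so all the suprema and infima invoked below exist.

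The technical engine is a per-band summability lemma. Fix $u\in\pos{E}$ and let $\widetilde\tau_u$ be the metrisable locally solid topology on~$B_u$ supplied by \cref{def:l.c.m.t.} for the \locsucommet\ topology~$\tau$, equipped with a translation-invariant lattice metric $\widetilde d_u$ for which $[0,u]$ is complete (\cref{rem:invariant}). I claim that if $(w_n)$ lies in $[0,u]$ with $\sum_n\widetilde d_u(0,w_n)<\infty$, then $w_n\oconv 0$. To see this I would fix~$k$ and consider the suprema $\bigvee_{n=k}^{k+m}w_n$, increasing in~$m$ inside $[0,u]$; the lattice-metric and triangle inequalities bound their increments by tails of $\sum_n\widetilde d_u(0,w_n)$, so they are $\widetilde d_u$-Cauchy in the complete space $[0,u]$. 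Their limit, which one checks equals the order supremum $t_k\coloneqq\sup_{n\ge k}w_n$ (using that the positive cone and the order intervals are $\widetilde\tau_u$-closed), satisfies $\widetilde d_u(0,t_k)\le\sum_{n\ge k}\widetilde d_u(0,w_n)$. Hence $t_k\downarrow$ and $t_k\to 0$ in $\widetilde d_u$, and cone-closedness again gives $\inf_k t_k=0$; as $0\le w_n\le t_k$ for $n\ge k$, this is exactly $w_n\oconv 0$ in~$B_u$, hence in~$E$ and in~$E^\delta$.

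With this in hand I would build the indices by induction, exploiting that $A$ has no largest element. Put $u_0\coloneqq 0$, choose $\alpha_1>\widetilde\alpha_1$ arbitrarily, and, having chosen $\alpha_1<\dots<\alpha_{n-1}$, set $u_j\coloneqq\bigvee_{k=1}^{j}w_{\alpha_k}$ for $j<n$. For each such~$j$ we have $w_\alpha\wedge u_j\tauconv 0$, hence $\widetilde d_{u_j}(0,w_\alpha\wedge u_j)\to 0$ along the net; there being finitely many constraints and no largest element, I can pick $\alpha_n>\max(\widetilde\alpha_n,\alpha_{n-1})$ with $\widetilde d_{u_j}(0,w_{\alpha_n}\wedge u_j)\le 2^{-n}$ for every $j<n$. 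Then \ref{indices_1} and \ref{indices_2} hold, and for each fixed~$m$ the estimate $\sum_{n>m}\widetilde d_{u_m}(0,w_{\alpha_n}\wedge u_m)\le\sum_{n>m}2^{-n}<\infty$ lets the summability lemma conclude $w_{\alpha_n}\wedge u_m\oconv 0$.

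The step I expect to be the crux is passing from ``$w_{\alpha_n}\wedge u_m\oconv 0$ for every~$m$'' to full uo-nullity, that is, to $w_{\alpha_n}\wedge u\oconv 0$ for \emph{every} $u\in\pos{E^\delta}$, since no single principal band sees all such~$u$ at once. Here I would argue in $E^\delta$ via the tail supremum $t_\infty\coloneqq\inf_N\sup_{n\ge N}(w_{\alpha_n}\wedge u)\in[0,u]$. The infinite distributive law gives $t_\infty\wedge u_m\le\inf_N\sup_{n\ge N}(w_{\alpha_n}\wedge u_m)=0$ for every~$m$, so $t_\infty$ is disjoint from every $w_{\alpha_k}$ and thus lies in the disjoint complement of the band~$B$ generated by $\{w_{\alpha_k}\}$; on the other hand $t_\infty$ is obtained from elements of~$B$ by order suprema and infima, so $t_\infty\in B$. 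Hence $t_\infty\in B\cap B^\dc=\{0\}$, giving $w_{\alpha_n}\wedge u\oconv 0$ and therefore \ref{indices_3}. The final, less precise statement then follows by reading off the indices, combining with the largest-element case.
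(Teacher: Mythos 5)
Your proof is correct, and its engine is the same as the paper's: reduce to a positive net $\tau$-converging to zero, inductively select indices so that the meets of later terms against (bands generated by) earlier terms have summably small metric, use the completeness of the order interval to realise the tail suprema as elements decreasing to zero, and conclude order convergence of the meets. The differences are cosmetic in the construction (the paper controls $x_{\alpha_n}\wedge x_{\alpha_k}$ in the singly generated bands $B_{x_{\alpha_k}}$, you control $w_{\alpha_n}\wedge u_j$ for the finite suprema $u_j=\bigvee_{k\leq j}w_{\alpha_k}$) but genuine in the last step: where the paper invokes \cite[Proposition~7.4]{deng_de_jeu:2022a} to pass from ``$\o$-null meets against a generating family of the band $B$'' to uo-nullity, you give a self-contained argument by moving to $E^\delta$, forming the tail limsup $t_\infty=\inf_N\sup_{n\geq N}(w_{\alpha_n}\wedge u)$, and showing $t_\infty\in B\cap B^\dc=\{0\}$ via the infinite distributive law and order closedness of bands. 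That replacement is sound (the passage to $E^\delta$ is harmless since $F$ is regular in $E$ and $E$ is regular in $E^\delta$, so uo-convergence of the embedded sequence transfers back to $F$), and it makes the proof independent of the external proposition at the cost of working in the Dedekind completion; conversely, the paper's route stays inside $E$ throughout and never needs arbitrary countable suprema to exist. Your isolation of the summability lemma is also just a packaging of the paper's inline Cauchy computation, with \cite[Theorem~2.21]{aliprantis_burkinshaw_LOCALLY_SOLID_RIESZ_SPACES_WITH_APPLICATIONS_TO_ECONOMICS_SECOND_EDITION:2003} doing the same work of identifying topological limits of monotone sequences with order limits.
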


\begin{proof}
	It is clear that $x=x_{\alpha_{\mathrm{largest}}}$ when~$A$ has a largest element $\alpha_{\mathrm{largest}}$.
	
	Suppose that~$A$ does not have a largest element. 
	
	Using the linearity of~$\tau$-convergence and uo-convergence; that $\abs{x_\alpha}\tauconv 0$ when $x_\alpha\tauconv 0$; that uo-convergence in~$E$ of a sequence (or net) in the regular vector sublattice~$F$ of~$E$ to an element of~$F$ implies uo-convergence in~$F$ to the same limit; and that $x_\alpha\uoconv 0$ when $\abs{x_\alpha}\uoconv 0$, it is easy to see that it is sufficient to consider the case of a net $(x_\alpha)_{\alpha\in A}$ in~$\pos{E}$ such that $x_\alpha\tauconv 0$ in~$E$.
	
	After this reduction, we choose any index $\widetilde\alpha_1\in A$. Since~$A$ has no maximal element, we can find an index~$\alpha_1$ such that $\alpha_1>\widetilde\alpha_1$. By assumption (see also \cref{rem:invariant}), we can find a translation invariant metric~$d_1$ on~$B_{x_{\alpha_1}}$ such that $d_1\left(0,y\right)\leq d_1\left(0,z\right)$ for all $y,z\in B_{x_{\alpha_1}}$ with $\abs{y}\leq\abs{z}$, and also such that:
	\begin{enumerate}[label=\textup{(\alph*)},ref=\textup{(\alph*)}]
		\item
		the metric topology on~$B_{x_{\alpha_1}}$ induced by~$d_1$ is coarser than $\tau\vert_{B_{x_{\alpha_1}}}$;
		
		\item
		$([0,x_{\alpha_1}],d_1)$ is a complete metric space.
	\end{enumerate}
	For $n=2,3,\dotsc$, we shall now inductively choose any index~$\widetilde\alpha_n$, and then find an index $\alpha_n$, together with a translation invariant metric~$d_n$ on~$B_{x_{\alpha_n}}$ such that $d_n\left(0,y\right)\leq d_n\left(0,z\right)$ for all $y,z\in B_{x_{\alpha_n}}$ with $\abs{y}\leq\abs{z}$, and also such that:
	\begin{enumerate}[label=\textup{(\roman*)},ref=\textup{(\roman*)}]
		\item\label{property_1}
	    the metric topology on~$B_{x_{\alpha_n}}$ induced by~$d_n$ is coarser than $\tau\vert_{B_{x_{\alpha_n}}}$;
	
	    \item\label{property_2}
    	$([0,x_{\alpha_n}],d_n)$ is a complete metric space;
    	
    	\item\label{property_3}
    	$\alpha_n>\alpha_{n-1}$ and $\alpha_n>\widetilde\alpha_n$;
    	
    	\item\label{property_4}
    	$d_k\left(0,x_{\alpha_n}\wedge x_{\alpha_k}\right)\leq \dfrac{1}{2^n}$ for $k=1,2,\dotsc,n-1$.
	\end{enumerate}
	We start with $n=2$. Choose any index~$\widetilde\alpha_{2}$. From the fact that $x_\alpha\tauconv 0$ in~$E$ we see that also $x_\alpha\wedge x_{\alpha_1}\tauconv 0$ because~$\tau$ is locally solid. Since $x_{\alpha}\wedge x_{\alpha_1}\in B_{x_{\alpha_1}}$ for $\alpha\in A$, and since $\tau\vert_{B_{x_{\alpha_1}}}$ is finer than the metric topology on~$B_{x_{\alpha_1}}$ induced by~$d_1$, we conclude that $d_1\left(0,x_\alpha \wedge x_{\alpha_1}\right)\to 0$. Hence we can find an $\alpha_2\in A$ with $\alpha_2>\alpha_1$ and $\alpha_2>\widetilde\alpha_2$ such that $d_1\left(0,x_{\alpha_2}\wedge x_{\alpha_1}\right)\leq \dfrac{1}{2^2}$. This takes care of~\ref{property_3} and~\ref{property_4}. Since~$\tau$ is \locsucommet, we can find a translation invariant metric~$d_2$ on $B_{x_{\alpha_2}}$ such that $d_2\left(0,y\right)\leq d_2\left(0,z\right)$ for all $y,z\in B_{x_{\alpha_2}}$ with $\abs{y}\leq\abs{z}$,  and also such that~\ref{property_1} and~\ref{property_2} are satisfied. This completes the choices for $n=2$.
	
    Suppose that $n\geq 2$ and that we have already found $\alpha_2,\dotsc,\alpha_n$
    and $d_2,\dotsc,d_n$ satisfying \ref{property_1}--\ref{property_4}. Choose any index~$\widetilde\alpha_{n+1}$.  Then~$\alpha_{n+1}$ and~$d_{n+1}$ can be found by essentially the same argument as for $n=2$. Indeed, the fact that $x_\alpha\tauconv 0$ implies that $x_{\alpha}\wedge x_{\alpha_k}\tauconv 0$ for $k=1,\dotsc,n$.  Since $x_{\alpha}\wedge x_{\alpha_k}\in B_{x_{\alpha_k}}$ for $\alpha\in A$ and $k=1,\dotsc,n$, and since $\tau\vert_{B_{x_{\alpha_k}}}$ is finer than the metric topology on $B_{x_{\alpha_k}}$ induced by~$d_k$ for $k=1,\dotsc,n$, we conclude that $d_k\left(0,x_\alpha \wedge x_{\alpha_k}\right)\to 0$ for $k=1,\dotsc,n$. Hence we can find an~$\alpha_{n+1}\in A$ with $\alpha_{n+1}>\alpha_n$ and $\alpha_{n+1}>\widetilde\alpha_{n+1}$ such that $d_k\left(0,x_{\alpha_{n+1}}\wedge x_{\alpha_k}\right)\leq \dfrac{1}{2^{n+1}}$ for $k=1,2,\dotsc,n$. This takes care of~\ref{property_3} and~\ref{property_4}. Since~$\tau$ is \locsucommet, we can find a translation invariant metric~$d_{n+1}$ on~$B_{x_{\alpha_{n+1}}}$ such that $d_{n+1}\left(0,y\right)\leq d_{n+1}\left(0,z\right)$ for all $y,z\in B_{x_{\alpha_{n+1}}}$ with $\abs{y}\leq\abs{z}$,  and also such that~\ref{property_1} and~\ref{property_2} are satisfied. This completes the induction step.

    Now that the~$\alpha_n$ and the~$d_n$ as required have been found, we fix a $k\geq 1$. From~\ref{property_4} we know that $d_k\left(0,x_{\alpha_i}\wedge x_{\alpha_k}\right)\leq \dfrac{1}{2^i}$ for $i\geq k+1$. Noticing that $\bigvee_{i=r}^s x_{\alpha_i}\wedge x_{\alpha_k}\in B_{x_{\alpha_k}}$ for $1\leq r\leq s$, using the translation invariance of~$d_k$ on~$B_{x_{\alpha_k}}$, the fact that $y\vee z-y\leq z$ for positive elements $y,z$ of a vector lattice, as well as the monotonicity of $d_k\left(0,\,\cdot\,\right)$ on~$B_{x_{\alpha_k}}^+$, we see from this that, for $p>q\geq n\geq k$,
    \begin{align*}
        d_k\left(\bigvee_{i=n}^p x_{\alpha_i}\wedge x_{\alpha_k},\bigvee_{i=n}^q x_{\alpha_i}
        \wedge x_{\alpha_k}\right)&=d_k\left(0,\bigvee_{i=n}^p x_{\alpha_i}\wedge x_{\alpha_k}-\bigvee_{i=n}^q x_{\alpha_i}\wedge x_{\alpha_k}\right)
        \\&\leq d_k\left(0,\bigvee_{i=q+1}^p x_{\alpha_i}\wedge x_{\alpha_k}\right)
        \\&\leq d_k\left(0,\sum_{i=q+1}^p x_{\alpha_i}\wedge x_{\alpha_k}\right)
        \\&\leq \sum_{i=q+1}^p d_k\left(0,x_{\alpha_i}\wedge x_{\alpha_k}\right)
        \\&\leq\sum_{i=q+1}^\infty d_k\left(0,x_{\alpha_i}\wedge x_{\alpha_k}\right)
        \\&\leq \sum_{i=q+1}^\infty \dfrac{1}{2^i}=\dfrac{1}{2^q}.
    \end{align*}
    For a fixed $n\geq k$, this implies that the sequence $\left(\bigvee_{i=n}^p x_{\alpha_i}\wedge x_{\alpha_k}\right)_{p=n}^\infty$ is a Cauchy sequence in the complete metric space $([0,x_{\alpha_k}], d_k)$. Hence its limit~$l_{n,k}$ as $p\to\infty$ exists in the topology that~$d_k$ induces on~$B_{x_{\alpha_k}}$. It follows  from the local solidness of this topology that $l_{n,k}\downarrow_{n; n\geq k}$, and also that $0\leq x_{\alpha_n}\wedge x_{\alpha_k}\leq l_{n,k}$ for $n\geq k$ ; see \cite[Theorem~2.21]{aliprantis_burkinshaw_LOCALLY_SOLID_RIESZ_SPACES_WITH_APPLICATIONS_TO_ECONOMICS_SECOND_EDITION:2003}. Furthermore, for $p\geq n\geq k+1$, we have
   \begin{align*}
   d_k\left(0,\bigvee_{i=n}^p x_{\alpha_i}\wedge x_{\alpha_k}\right)&\leq d_k\left(0,\sum_{i=n}^p  x_{\alpha_i}\wedge x_{\alpha_k}\right) \\
  &\leq \sum_{i=n}^p d_k\left(0, x_{\alpha_i}\wedge x_{\alpha_k}\right)\\
   &\leq \sum_{i=n}^p \frac{1}{2^i}\\
   &<\frac{1}{2^{n-1}}.
\end{align*}
We conclude that $d_k\left(0,l_{n,k}\right)\leq 1/2^{n-1}$ for $n\geq k+1$, so that the sequence $\left (l_{n,k}\right)_{n=k+1}^\infty$ converges to zero in the topology that~$d_k$ induces on~$B_{x_{\alpha_k}}$. As this topology is locally solid and the sequence is decreasing,  \cite[Theorem~2.21 (c)]{aliprantis_burkinshaw_LOCALLY_SOLID_RIESZ_SPACES_WITH_APPLICATIONS_TO_ECONOMICS_SECOND_EDITION:2003} shows that $l_{n,k}\downarrow_{n;n\geq k+1} 0$  in~$B_{x_{\alpha_k}}$.  Since we had already observed that $0\leq x_{\alpha_n}\wedge x_{\alpha_k}\leq l_{n,k}$ for $n\geq k$, we can now conclude that $x_{\alpha_n}\wedge x_{\alpha_k}\oconv 0$ in~$B_{x_{\alpha_k}}$ as $n\to\infty$.  Because~$B_{x_{\alpha_k}}$ is a regular vector sublattice of~$E$,  we see that  also $x_{\alpha_n}\wedge x_{\alpha_k}\oconv 0$ in~$E$ as $n\to\infty$.

After having covered one fixed $k\geq 1$, we now let~$B$ be the band in~$E$ that is generated by $\left\{x_{\alpha_k}:k=1,2,\dotsc\right\}$. Clearly, $(x_{\alpha_n})_{n=1}^\infty\subseteq B$, and we have just shown that $x_{\alpha_n}\wedge x_{\alpha_k}\oconv 0$ in~$E$ for $k=1,2,\dotsc$.  By \cite[Proposition~7.4]{deng_de_jeu:2022a}, this implies that $x_{\alpha_n}\uoconv 0$ in~$E$, as desired. This completes the proof of the existence of indices $\alpha_1,\alpha_2,\alpha_3,\dotsc$ satisfying \ref{indices_1}--\ref{indices_3} in the case that~$A$ has no largest element.

The statements in the final paragraph follow by taking all~$\alpha_n$ equal to $\alpha_{\mathrm{largest}}$ if~$A$ has a largest element~$\alpha_{\mathrm{largest}}$, and by taking all~$\widetilde\alpha_{n}$ equal to a fixed element of~$A$ if~$A$ has no largest element.
\end{proof}

The precise formulation regarding the~$\widetilde\alpha_n$ and the~$\alpha_n$ in \cref{res:subsequence} allows one to combine several convergences, as follows.

\begin{corollary}\label{res:subsequences_several_convergences}
	Let $(E,\tau)$ be a locally solid vector lattice, where~$\tau$ is \locsucommet, and let~$F$ be a regular vector sublattice of~$E$. Let $(x_\alpha)_{\alpha\in A}$ be a net in~$F$ and let $x\in F$ be such that $x_\alpha\tauconv x$ in~$E$. Suppose, furthermore, that $\tau_1,\dotsc,\tau_k$ are metrisable linear topologies on~$E$, and that $x_1,\dotsc,x_k\in E$ are such that $x_\alpha\xrightarrow{\tau_i} x_i$ for $i=1,\dotsc,k$. 
	
	Suppose that~$A$ has a largest element $\alpha_{\mathrm{largest}}$. Then $x=x_1=\dotsb=x_k=x_{\alpha_{\mathrm{largest}}}$.
	
	Suppose that~$A$ has no largest element. In this case, there exist indices $\alpha_1<\alpha_2<\alpha_3<\dotsc$ in~$A$ such that:
	\begin{enumerate}		
		\item
		$x_{\alpha_n}\uoconv x$ as $n\to\infty$ in~$F$.
		
		\item
		$x_{\alpha_n}\xrightarrow{\tau_i} x_i$ as $n\to\infty$ for $i=1,\dotsc,k$.
	\end{enumerate}
	Consequently, for arbitrary~$A$, there exist indices $\alpha_1\leq\alpha_2\leq\alpha_3\leq\dotsc$ such that  $x_{\alpha_n}\uoconv x$ in~$F$ as well as $x_{\alpha_n}\xrightarrow{\tau_i} x_i$ for $i=1,\dotsc,k$; when~$A$ has no largest element, one can choose the $\alpha_n$ to be strictly increasing. 
	\end{corollary}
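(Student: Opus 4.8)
The plan is to exploit the flexibility in the choice of the auxiliary indices $\widetilde\alpha_n$ that is built into \cref{res:subsequence}: the finitely many extra metrisable convergences will be encoded entirely through these $\widetilde\alpha_n$, so that the machinery of \cref{res:subsequence} can be invoked verbatim. First I would dispose of the case where~$A$ has a largest element $\alpha_{\mathrm{largest}}$. A net indexed by a directed set with a largest element converges, in every Hausdorff linear topology, to its value at that largest element; hence $x_\alpha\tauconv x_{\alpha_{\mathrm{largest}}}$ and $x_\alpha\xrightarrow{\tau_i}x_{\alpha_{\mathrm{largest}}}$ for each~$i$, and uniqueness of limits gives $x=x_1=\dotsb=x_k=x_{\alpha_{\mathrm{largest}}}$.

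Now suppose that~$A$ has no largest element. Since each~$\tau_i$ is metrisable, zero has a countable neighbourhood basis in~$\tau_i$, which I may take to be decreasing, say $(W_m^{(i)})_{m\geq 1}$, by replacing each member by the intersection of its predecessors. As $x_\alpha\xrightarrow{\tau_i}x_i$, for every~$n$ and every~$i$ there is an index $\gamma_n^{(i)}\in A$ with $x_\alpha-x_i\in W_n^{(i)}$ for all $\alpha\geq\gamma_n^{(i)}$. I would then run the inductive selection procedure of \cref{res:subsequence}, making the following choice at the step where one is free to pick~$\widetilde\alpha_n$: let $\widetilde\alpha_n$ be an upper bound in~$A$ of the finite set $\{\gamma_n^{(1)},\dotsc,\gamma_n^{(k)}\}$, which exists because~$A$ is directed. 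The theorem then returns indices $\alpha_1<\alpha_2<\dotsb$ with $\alpha_n>\widetilde\alpha_n$ for all~$n$ and with $x_{\alpha_n}\uoconv x$ in~$F$.

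It remains to read off the convergences $x_{\alpha_n}\xrightarrow{\tau_i}x_i$. By construction $\alpha_n>\widetilde\alpha_n\geq\gamma_n^{(i)}$ for each~$i$, so $x_{\alpha_n}-x_i\in W_n^{(i)}$; fixing~$i$ and a basic neighbourhood $W_m^{(i)}$, the nesting $W_n^{(i)}\subseteq W_m^{(i)}$ for $n\geq m$ then gives $x_{\alpha_n}-x_i\in W_m^{(i)}$ for all $n\geq m$, whence $x_{\alpha_n}\xrightarrow{\tau_i}x_i$ as $n\to\infty$. The final statement follows by combining the two cases, taking all~$\alpha_n$ equal to $\alpha_{\mathrm{largest}}$ in the first. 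I do not expect a genuine obstacle here, since the precise formulation of \cref{res:subsequence} was designed exactly so that such extra convergences can be threaded through the free indices~$\widetilde\alpha_n$; the only points needing care are the use of the directedness of~$A$ to absorb all~$k$ conditions into a single~$\widetilde\alpha_n$, and the metrisability of each~$\tau_i$, which supplies the countable---hence threadable---neighbourhood basis.
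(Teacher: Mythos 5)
Your proposal is correct and follows essentially the same route as the paper: both dispose of the largest-element case by uniqueness of limits, and both thread the extra metrisable convergences through the free indices $\widetilde\alpha_n$ of \cref{res:subsequence} by choosing, for each $n$, a single index beyond which the net lies in the $n$-th basic neighbourhood for all $i=1,\dotsc,k$ simultaneously. Your version is marginally more explicit than the paper's in arranging the neighbourhood bases to be decreasing and in spelling out why $\alpha_n>\widetilde\alpha_n$ yields $x_{\alpha_n}\xrightarrow{\tau_i}x_i$, but the underlying argument is identical.
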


\begin{proof}
	When~$A$ has a largest element, all is clear. Suppose that this not the case. 
	For $i=1,\dotsc,k$, let $\{V_n^i:n\geq 1\}$ be a $\tau_i$-neighbourhood base at~$x_i$. For each~$n$, there exists an $\widetilde\alpha_n$ such that $x_{\alpha}\in V_n^i$ for all $\alpha\geq\widetilde\alpha_n$ and all $i=1,\dots,k$. We now choose these $\widetilde\alpha_n$ in \cref{res:subsequence}.  
\end{proof}	

\begin{remark}\label{rem:flexibility}
	\cref{res:subsequences_several_convergences} does still not use the precise formulation of \cref{res:subsequence} to its full extent. The choice of the $\widetilde\alpha_n$ in \cref{res:subsequence} is flexible, in the sense that one can let the chosen value of  $\widetilde\alpha_{n+1}$ depend on the then already known values of $\alpha_1,\dotsc,\alpha_n$ and $\widetilde\alpha_1,\dotsc,\widetilde\alpha_n$. In the proof of \cref{res:subsequences_several_convergences}, however, all $\widetilde\alpha_n$ that are used in the concluding application of \cref{res:subsequence} are already given in advance. This additional flexibility is presently still awaiting an application in presumably more delicate proofs.
	\end{remark}
		
For the ease of reference in the remainder of the paper, we include the following rephrasing of the (less precise) final paragraph of \cref{res:subsequence} in the terminology of \cref{def:embedded_sequence}.

\begin{theorem}\label{res:subsequence_simplified}
	Let $(E,\tau)$ be a locally solid vector lattice, where~$\tau$ is \locsucommet, and let~$F$ be a regular vector sublattice of~$E$. Let $(x_\alpha)_{\alpha\in A}$ be a net in~$F$ and let $x\in F$ be such that $x_\alpha\tauconv x$ in~$E$. Then $(x_\alpha)_{\alpha\in A}$ contains an embedded sequence that is uo-convergent to~$x$ in~$F$.
	
	In particular, every sequence in~$F$ that is $\tau$-convergent to an element of~$F$ has a subsequence that is uo-convergent to the same limit in~$F$.
\end{theorem}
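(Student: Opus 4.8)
The plan is to derive this statement directly from \cref{res:subsequence}, of which it is merely a reformulation in the terminology of \cref{def:embedded_sequence}. The hypotheses are identical in both results, so no new construction is required; what remains is simply to match the conclusions. First I would recall that the indices furnished by the final paragraph of \cref{res:subsequence} satisfy $\alpha_1\leq\alpha_2\leq\alpha_3\leq\dotsc$ in general, and $\alpha_1<\alpha_2<\alpha_3<\dotsc$ when $A$ has no largest element. These are precisely the two conditions in \cref{def:embedded_sequence} under which $(x_{\alpha_n})_{n\geq 1}$ qualifies as an embedded sequence in the net $(x_\alpha)_{\alpha\in A}$. Since \cref{res:subsequence} simultaneously guarantees that $x_{\alpha_n}\uoconv x$ in $F$, the first assertion follows at once.

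For the degenerate situation in which $A$ has a largest element $\alpha_{\mathrm{largest}}$, I would take all $\alpha_n$ equal to $\alpha_{\mathrm{largest}}$. By \cref{res:subsequence} we then have $x=x_{\alpha_{\mathrm{largest}}}$, so $(x_{\alpha_n})_{n\geq 1}$ is the constant sequence with value $x$, which is trivially uo-convergent to $x$; and the requirement in \cref{def:embedded_sequence} that the indices be merely weakly increasing (strict increase being demanded only in the absence of a largest element) is then met.

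For the final "in particular" clause, I would specialise to $A=\{1,2,3,\dotsc\}$, which is the directed set underlying an ordinary sequence. This set has no largest element, so the embedded sequence produced by the first part is indexed by a strictly increasing map $n\mapsto\alpha_n$; that is, it is an honest subsequence, exactly as observed immediately after \cref{def:embedded_sequence}. Hence every $\tau$-convergent sequence in $F$ has a subsequence that is uo-convergent in $F$ to the same limit.

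As this is purely a repackaging of \cref{res:subsequence}, I do not anticipate any genuine obstacle. The only point deserving a moment's care is to confirm that the weakly/strictly increasing dichotomy in \cref{def:embedded_sequence} aligns exactly with that in the final paragraph of \cref{res:subsequence}, so that both the case where $A$ has a largest element and the case where it does not are covered without a gap.
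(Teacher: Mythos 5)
Your proposal is correct and matches the paper exactly: the paper presents this theorem as a mere rephrasing of the final paragraph of \cref{res:subsequence} in the terminology of \cref{def:embedded_sequence}, with no separate proof, and your careful matching of the weakly/strictly increasing index conditions is precisely what that rephrasing amounts to.
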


\begin{remark}\label{rem:metrisable_is_just_about_sequences}
When~$\tau$ in \cref{res:subsequence} is, in addition, metrisable, \cref{res:nets_and_sequences_general} shows that the existence of an embedded sequence in an arbitrary $\tau$-convergent net that is uo-convergent to the same limit is equivalent to the existence of a subsequence of every $\tau$-convergent sequence that is uo-convergent to the same limit. It follows from \cref{res:nets_and_sequences_no_largest_element} that they are both also equivalent to the  existence of embedded sequences in nets in the more precise formulation as in \cref{res:subsequence}.
\end{remark}

\section{Applications}\label{sec:applications}

\noindent
In this section, we combine  \cref{sec:metrisable_and_submetrisable_topologies_on_vector_lattices,sec:embedded_unbounded_order_convergent_sequences_in_convergent_nets}. While doing so, we take some care pointing out how our results relate to the existing literature.

It will become apparent that the presence of the countable sup property is a sufficient condition for several results in this section to hold. We refer to \cref{subsec:countable_sup_property} for some additional material on this property.

For the sake of simplicity, the results that are to follow are based on \cref{res:subsequence_simplified} where we take  $F=E$. We remark, however, that the more precise formulation in \cref{res:subsequence}, involving a regular vector sublattice and a more precise statement on the indices for the embedded sequence, is also always valid.\footnote{With the exception of \cref{res:subsequence_frechet}, where there appears to be no obviously valid statement for regular vector sublattices.}

We start with a case where an application of the results in \cref{sec:embedded_unbounded_order_convergent_sequences_in_convergent_nets} does, in fact, not yield an optimal statement. Let $(E,\tau)$ be a locally solid vector lattice, where~$\tau$ is metrisable and complete. The combination of \cref{res:metrisable_is_OK} and \cref{res:subsequence_simplified} shows that every $\tau$-convergent net has an embedded sequence that is uo-convergent to the same limit. A much stronger statement is true, however.

\begin{proposition}\label{res:subsequence_frechet}
	Let $(E,\tau)$ be a locally solid vector lattice, where~$\tau$ is metrisable and complete. Then every $\tau$-convergent net in~$E$ has an embedded sequence that is relatively uniformly convergent to the same limit.
	\end{proposition}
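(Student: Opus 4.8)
The plan is to exploit the completeness of $\tau$ to manufacture a regulator as the sum of a rapidly convergent series of suitably \emph{rescaled} terms of the net. First I would invoke \cref{res:metrisation_theorem} to fix a translation invariant metric~$d$ compatible with~$\tau$; completeness of~$\tau$ then makes $(E,d)$ a complete metric space. By linearity and the local solidness of~$\tau$, replacing $x_\alpha$ by $w_\alpha\coloneqq\abs{x_\alpha-x}$ reduces the problem to a net $(w_\alpha)_{\alpha\in A}$ in~$\pos{E}$ with $w_\alpha\tauconv 0$, for which I must produce an embedded sequence $(w_{\alpha_n})_{n\geq 1}$ and a regulator $u\in\pos{E}$ such that, for each $\varepsilon>0$, one has $w_{\alpha_n}\leq\varepsilon u$ eventually. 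If~$A$ has a largest element, then the limit equals $w_{\alpha_{\mathrm{largest}}}=0$ and the constant embedded sequence works with any regulator, so the substance lies in the case where~$A$ has no largest element.

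The central step is the rescaling. Since scalar multiplication is $\tau$-continuous, $n\,w_\alpha\tauconv 0$ for each fixed~$n$, so $d\left(0,n\,w_\alpha\right)\to 0$ along the net. Using that~$A$ is directed and has no largest element, I would inductively choose $\alpha_1<\alpha_2<\dotsb$ with $d\left(0,n\,w_{\alpha_n}\right)\leq 2^{-n}$. Now consider the increasing sequence of partial sums $s_N\coloneqq\sum_{n=1}^N n\,w_{\alpha_n}$ in~$\pos{E}$. Translation invariance and the triangle inequality give $d\left(s_M,s_N\right)\leq\sum_{n=M+1}^N d\left(0,n\,w_{\alpha_n}\right)\leq\sum_{n=M+1}^N 2^{-n}$ for $M<N$, so $(s_N)_{N\geq 1}$ is $d$-Cauchy and hence, by completeness, $\tau$-convergent to some $u\in E$. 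As $s_N\uparrow$ and $s_N\tauconv u$, the limit is the supremum, $u=\sup_N s_N$, whence $u\geq s_n\geq n\,w_{\alpha_n}$, that is, $w_{\alpha_n}\leq\tfrac{1}{n}u$ for every~$n$. For $\varepsilon>0$ and $n\geq 1/\varepsilon$ this yields $\abs{x_{\alpha_n}-x}=w_{\alpha_n}\leq\tfrac{1}{n}u\leq\varepsilon u$, so $(x_{\alpha_n})_{n\geq 1}$, an embedded sequence in the sense of \cref{def:embedded_sequence}, is relatively uniformly convergent to~$x$ with regulator $u\geq 0$.

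Two supporting facts carry the construction. The first, $d\left(0,a+b\right)\leq d\left(0,a\right)+d\left(0,b\right)$ for $a,b\in E$, is immediate from the triangle inequality and translation invariance, and it is what controls the partial sums. The second is that an increasing $\tau$-convergent sequence converges to its supremum, which holds because the positive cone of a locally solid vector lattice is $\tau$-closed; this is the increasing counterpart of the monotone convergence statements in \cite[Theorem~2.21]{aliprantis_burkinshaw_LOCALLY_SOLID_RIESZ_SPACES_WITH_APPLICATIONS_TO_ECONOMICS_SECOND_EDITION:2003} already used in the proof of \cref{res:subsequence}. The genuinely essential point, rather than a technical obstacle, is the rescaling itself: summing the terms $w_{\alpha_n}$ unweighted would only produce a regulator with $w_{\alpha_n}\leq u$, i.e.\ order boundedness, whereas multiplying the $n$-th term by~$n$ forces the comparison constants $1/n$ to tend to~$0$, which is exactly what relative uniform convergence demands. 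Since~$\tau$ is metrisable, one could alternatively first reduce to the case of sequences via the relative-uniform analogue of the principle underlying \cref{res:nets_and_sequences_general}, but treating the net directly as above avoids relying on that analogue and keeps the index selection (strictly increasing because~$A$ has no largest element) entirely routine.
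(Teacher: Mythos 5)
Your argument is correct, but it takes a different route from the paper. The paper's proof is a two-line reduction: it cites \cite[Exercise~5.8]{aliprantis_burkinshaw_LOCALLY_SOLID_RIESZ_SPACES_WITH_APPLICATIONS_TO_ECONOMICS_SECOND_EDITION:2003} for the statement that every $\tau$-convergent \emph{sequence} has a relatively uniformly convergent subsequence, and then invokes the relative-uniform-convergence analogue of \cref{res:nets_and_sequences_general} to pass from sequences to nets (this is exactly the alternative you mention and set aside in your last sentence). What you do instead is unfold the content of that exercise and run it directly on the net: rescale by $n$, select indices with $d\left(0,n\,w_{\alpha_n}\right)\leq 2^{-n}$, sum the rescaled terms using completeness, and identify the limit of the increasing partial sums with their supremum via \cite[Theorem~2.21]{aliprantis_burkinshaw_LOCALLY_SOLID_RIESZ_SPACES_WITH_APPLICATIONS_TO_ECONOMICS_SECOND_EDITION:2003} to obtain the regulator. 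All the supporting steps check out: the subadditivity $d\left(0,a+b\right)\leq d\left(0,a\right)+d\left(0,b\right)$ follows from translation invariance, completeness of $(E,d)$ for a translation invariant compatible metric follows from completeness of~$\tau$, and the rescaling by~$n$ is indeed the essential point that upgrades mere order boundedness to relative uniform convergence. Your version buys self-containedness (no appeal to the exercise or to the reduction lemma) at the cost of redoing a known construction; the paper's version buys brevity at the cost of outsourcing the key estimate to a cited exercise. Both are valid proofs.
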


\begin{proof}
	According to \cite[Exercise~5.8]{aliprantis_burkinshaw_LOCALLY_SOLID_RIESZ_SPACES_WITH_APPLICATIONS_TO_ECONOMICS_SECOND_EDITION:2003}, every $\tau$-convergent sequence has a subsequence that is relatively uniformly convergent to the same limit. Now apply the version of \cref{res:nets_and_sequences_general} for relative uniform convergence.
	\end{proof}

For the un-topology on a Banach lattice, we have the following. The proof of  part~\ref{part:subsequence_un_2} is essentially that of \cite[Theorem~4.4]{deng_o_brien_troitsky:2017}, but formulated in such a way that it can be used in a wider context; see \cref{res:subsequence_uo-lebesgue}.

\begin{theorem}\label{res:subsequence_un}
	Let~$E$ be a Banach lattice. Then:
	\begin{enumerate}
		\item\label{part:subsequence_un_0}
			 every un-convergent net in~$E$ has an embedded sequence that is uo-conver\-gent to the same limit.
	\end{enumerate}
 If~$E$ has an order continuous norm, then:
	\begin{enumerate}[resume]
		\item\label{part:subsequence_un_1}
		every un-convergent net in~$E$ has an embedded sequence that is uo-conver\-gent as well as un-convergent to the same limit;
		
		\item\label{part:subsequence_un_2}
		a sequence in~$E$ is un-convergent to $x\in E$ if and only if every subsequence has a further subsequence that is uo-convergent to~$x$.
	\end{enumerate}
\end{theorem}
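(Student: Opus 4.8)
The plan is to obtain all three parts from the core result \cref{res:subsequence_simplified} together with \cref{res:un_is_l.c.sub}, with the order continuity of the norm in parts~\ref{part:subsequence_un_1} and~\ref{part:subsequence_un_2} serving only to upgrade uo-convergence to un-convergence.

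For part~\ref{part:subsequence_un_0}, I would invoke \cref{res:un_is_l.c.sub}, which tells us that the un-topology~$\unb\tau$ on~$E$ is \locsucommet, and then apply \cref{res:subsequence_simplified} with $F=E$. This immediately gives that every $\unb\tau$-convergent net has an embedded sequence that is uo-convergent to the same limit.

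For part~\ref{part:subsequence_un_1}, the point is that order continuity of the norm forces uo-convergence to imply un-convergence. Indeed, if $y_n\uoconv x$, then $\abs{y_n-x}\wedge u\oconv 0$ for every $u\in\pos{E}$, and the order continuity of the norm yields $\norm{\abs{y_n-x}\wedge u}\to 0$, which is exactly un-convergence to~$x$; the same computation, carried out for nets, shows that $\unb\tau$ is then a uo-Lebesgue topology. Hence the uo-convergent embedded sequence produced in part~\ref{part:subsequence_un_0} is automatically un-convergent as well, which is part~\ref{part:subsequence_un_1}.

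For part~\ref{part:subsequence_un_2}, I would use the elementary subsequence principle. The forward implication is immediate: any subsequence of an un-convergent sequence is again un-convergent to the same limit, so the final assertion of \cref{res:subsequence_simplified} supplies a further subsequence that is uo-convergent to~$x$. For the converse, the hypothesis together with the uo-to-un passage of part~\ref{part:subsequence_un_1} says that every subsequence of $(x_n)_{n\geq 1}$ has a further subsequence that is un-convergent to~$x$; were $(x_n)_{n\geq 1}$ not un-convergent to~$x$, some $\unb\tau$-neighbourhood of~$x$ would be avoided by infinitely many~$x_n$, and the resulting subsequence could admit no further subsequence converging to~$x$, a contradiction. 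The main obstacle, such as it is, is precisely this uo-to-un upgrade underlying parts~\ref{part:subsequence_un_1} and~\ref{part:subsequence_un_2}, which is the only place the order continuity hypothesis is genuinely used; everything else is a direct appeal to already-established machinery.
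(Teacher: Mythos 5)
Your proposal is correct and follows essentially the same route as the paper: part (1) via \cref{res:un_is_l.c.sub} plus \cref{res:subsequence_simplified}, and the remaining parts by upgrading uo-convergence to un-convergence under order continuity of the norm (the paper cites the fact that the un-topology is then the uo-Lebesgue topology, where you supply the short computation inline) together with the standard subsequence-of-a-subsequence contradiction. No gaps.
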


\begin{proof}
	The statement in~\ref{part:subsequence_un_0} is immediate from \cref{res:un_is_l.c.sub} and \cref{res:subsequence_simplified}.
	
	Suppose that~$E$ has an order continuous norm.
	
	In this case, the un-topology is the uo-Lebesgue topology on~$E$ (see \cite[Proposition~2.5]{deng_o_brien_troitsky:2017}, for example), which makes clear that~\ref{part:subsequence_un_1} follows from~\ref{part:subsequence_un_0}.
	
	The forward implication in~\ref{part:subsequence_un_2} is immediate from~\ref{part:subsequence_un_0}. For the converse implication, suppose that a sequence $(x_n)_{n\geq 1}$ has the property that every subsequence has a further subsequence that is uo-convergent to~$x$, but that  $(x_n)_{n\geq 1}$ is not un-convergent to~$x$. In this case, there exists an un-neighbourhood~$V$ of~$x$ and a subsequence of $(x_n)_{n\geq 1}$ that stays outside~$V$. By assumption, there is a further subsequence that is uo-convergent to~$x$. Since the un-topology is the uo-Lebesgue topology on~$E$, this further subsequence is eventually in~$V$. This is a contradiction.
	\end{proof}

\begin{remark}
	Part~\ref{part:subsequence_un_0} of \cref{res:subsequence_un} improves \cite[Proposition~4.1]{deng_o_brien_troitsky:2017}, where it is shown that every un-convergent \emph{sequence} in a Banach lattice has a subsequence that is uo-convergent to the same limit. The statement in~\ref{part:subsequence_un_1} is \cite[Corollary~3.5]{deng_o_brien_troitsky:2017}. The statement in~\ref{part:subsequence_un_2} is \cite[Theorem~4.4]{deng_o_brien_troitsky:2017}.
\end{remark}

We now turn to Fatou topologies with full carrier, for which we have the following basic result.

\begin{theorem}\label{res:subsequence_fatou}
	Let $(E,\tau)$ be a locally solid vector lattice, where~$\tau$ is a Fatou topology. Suppose that $C_\tau=E$, which is certainly the case when~$E$ has the countable sup property, when~$C_\tau$ has a countable order basis, and when~$\tau$ is metrisable. Then every $\tau$-convergent net in~$E$ has an embedded sequence that is uo-convergent to the same limit.
\end{theorem}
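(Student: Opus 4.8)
The plan is to reduce everything to \cref{res:subsequence_simplified} by passing to the Dedekind completion. The topology $\tau$ on $E$ itself need not be \locsucommet---interval completeness is precisely what may fail on $E$---so \cref{res:subsequence_simplified} cannot be applied to $(E,\tau)$ directly. Instead, I would let $\tau^\delta$ be the unique Fatou extension of $\tau$ to the Dedekind completion $E^\delta$, as recalled before \cref{res:when_fatou_is_submetrisable_dedekind}. The three sufficient conditions in the statement are each known to force $C_\tau=E$: this is clear when $\tau$ is metrisable (see \cref{subsec:locally_solid_topologies_on_vector_lattices}), it holds when $E$ has the countable sup property by \cite[Theorem~4.17]{aliprantis_burkinshaw_LOCALLY_SOLID_RIESZ_SPACES_WITH_APPLICATIONS_TO_ECONOMICS_SECOND_EDITION:2003}, and it holds when $C_\tau$ has a countable order basis by \cref{res:C_tau=E_c.o.basis}. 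Hence, under any of them, the hypothesis $C_\tau=E$ is in force, and \cref{res:when_fatou_is_locally_interval_complete_submetrisable_countability} then guarantees that $\tau^\delta$ is \locsucommet.

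Next I would verify the remaining hypotheses needed to invoke \cref{res:subsequence_simplified} with ambient space $(E^\delta,\tau^\delta)$ and regular vector sublattice $F=E$. Since $E$ sits inside its Dedekind completion as an order dense and majorising vector sublattice (see \cref{subsec:vector_lattices}), and order dense vector sublattices are regular, $E$ is a regular vector sublattice of $E^\delta$. Given a $\tau$-convergent net $(x_\alpha)_{\alpha\in A}$ in $E$ with limit $x\in E$, the equality $\tau^\delta\vert_E=\tau$ shows that the same net is $\tau^\delta$-convergent to $x$ when regarded in $E^\delta$. Thus all the hypotheses of \cref{res:subsequence_simplified} are satisfied.

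Applying \cref{res:subsequence_simplified} then yields an embedded sequence of $(x_\alpha)_{\alpha\in A}$ that is uo-convergent to $x$ \emph{in $F=E$}, which is exactly the assertion. I do not expect a genuine obstacle at this stage: all the real difficulty has already been absorbed into \cref{res:when_fatou_is_locally_interval_complete_submetrisable_countability} (producing the interval-complete submetrising topology on $E^\delta$) and into the regular-sublattice formulation of the core \cref{res:subsequence}. The one point that genuinely deserves care, and is the reason the detour through $E^\delta$ is necessary, is that $\tau$ may lack the interval completeness demanded by \cref{res:subsequence_simplified}; passing to $E^\delta$ repairs this, while the regularity of $E$ in $E^\delta$ ensures that uo-convergence computed in $E^\delta$ to a limit lying in $E$ descends to uo-convergence in $E$, bringing the conclusion back to the original space.
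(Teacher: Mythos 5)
Your proposal is correct and follows essentially the same route as the paper: pass to the Dedekind completion, invoke \cref{res:when_fatou_is_locally_interval_complete_submetrisable_countability} to get a \locsucommet\ topology on $E^\delta$, and apply \cref{res:subsequence_simplified}. The only (cosmetic) difference is that you take $F=E$ inside \cref{res:subsequence_simplified}, whereas the paper first obtains uo-convergence in $E^\delta$ and then descends to $E$ using the regularity of $E$ in $E^\delta$; both uses of regularity are equivalent.
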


\begin{proof}
	Suppose that $(x_\alpha)_{\alpha\in A}$ is a net in~$E$ and that $x_\alpha\tauconv x$ for some $x\in E$. If we let~$\tau^\delta$ denote the extension of~$\tau$ to a Fatou topology on~$E^\delta$, then also $x_\alpha\conv{\tau^\delta}x$ in~$E^\delta$. The combination of \cref{res:when_fatou_is_locally_interval_complete_submetrisable_countability} and \cref{res:subsequence_simplified} yields that $(x_\alpha)_{\alpha\in A}$ has an embedded sequence that is uo-convergent to~$x$ in~$E^\delta$. As~$E$ is a regular vector sublattice of~$E^\delta$, this embedded sequence is also uo-convergent to~$x$ in~$E$.
	\end{proof}

\begin{remark}\quad
	\begin{enumerate}
		\item 	
		The special case of \cref{res:subsequence_fatou} where $C_\tau$ has a countable order basis is \cite[Theorem~6.7]{kandic_taylor:2018}.
	
		\item
		According to \cite[Theorem~4.19]{aliprantis_burkinshaw_LOCALLY_SOLID_RIESZ_SPACES_WITH_APPLICATIONS_TO_ECONOMICS_SECOND_EDITION:2003}, an order bounded convergent net in a Fatou topology on a vector lattice with the countable sup property has an embedded sequence that is order convergent to the same limit. This is clear from \cref{res:subsequence_fatou}, since an order bounded uo-convergent sequence is order convergent. It is, in fact, more generally true when the topology has full carrier.
	\end{enumerate}
\end{remark}

Before continuing with our applications, we note that, for o-Lebesgue topologies, there is a converse to \cref{res:subsequence_fatou}. In fact, we have the following.

\begin{theorem}\label{res:equivalence_for_o-lebesgue_topologies}
	Let $(E,\tau)$ be a locally solid vector lattice, where~$\tau$ is an o-Lebesgue topology. The following are equivalent:
	\begin{enumerate}
		
		\item\label{part:equivalence_for_o-lebesgue_topologies_1}
		$C_\tau=E$;
		
		\item\label{part:equivalence_for_o-lebesgue_topologies_2}
		$E$ has the countable sup property;		
				
		\item\label{part:equivalence_for_o-lebesgue_topologies_3}
		every $\tau$-convergent net in~$E$ has an embedded sequence that is uo-conver\-gent to the same limit;
						
		\item\label{part:equivalence_for_o-lebesgue_topologies_4}
		every increasing $\tau$-convergent net in~$\pos{E}$ has an embedded sequence that is uo-convergent to the same limit. 
		\end{enumerate}		
\end{theorem}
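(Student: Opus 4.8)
The plan is to run the cycle \ref{part:equivalence_for_o-lebesgue_topologies_2}~$\Leftrightarrow$~\ref{part:equivalence_for_o-lebesgue_topologies_1}~$\Rightarrow$~\ref{part:equivalence_for_o-lebesgue_topologies_3}~$\Rightarrow$~\ref{part:equivalence_for_o-lebesgue_topologies_4}~$\Rightarrow$~\ref{part:equivalence_for_o-lebesgue_topologies_2}, of which almost everything is already in place. Since an o-Lebesgue topology is a Fatou topology, the equivalence of \ref{part:equivalence_for_o-lebesgue_topologies_1} and \ref{part:equivalence_for_o-lebesgue_topologies_2} is exactly the equivalence of parts~\ref{dedekind2} and~\ref{dedekind6} of \cref{res:when_fatou_is_submetrisable_dedekind}. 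The implication \ref{part:equivalence_for_o-lebesgue_topologies_1}~$\Rightarrow$~\ref{part:equivalence_for_o-lebesgue_topologies_3} is \cref{res:subsequence_fatou} applied with $C_\tau=E$, and \ref{part:equivalence_for_o-lebesgue_topologies_3}~$\Rightarrow$~\ref{part:equivalence_for_o-lebesgue_topologies_4} is immediate, an increasing net in~$\pos{E}$ being in particular a net. All the work is therefore in the single implication \ref{part:equivalence_for_o-lebesgue_topologies_4}~$\Rightarrow$~\ref{part:equivalence_for_o-lebesgue_topologies_2}, which I expect to be the crux.

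For that implication I would first isolate an elementary lattice fact: if $0\le y_1\le y_2\le\dotsb\le u$ and $y_n\uoconv u$, then $\sup_n y_n=u$. This is where uo-convergence is converted back into an order-theoretic supremum: since $\abs{y_n-u}=u-y_n$, testing uo-convergence against $w\coloneqq u-y_1$ gives $(u-y_n)\wedge(u-y_1)=u-y_n\oconv 0$, and a decreasing sequence that order-converges to~$0$ must have infimum~$0$, so $\inf_n(u-y_n)=0$, i.e.\ $\sup_n y_n=u$. Now let $0\le x_\alpha\uparrow u$ be any increasing net in~$\pos{E}$. Then $x_\alpha\oconv u$, so $x_\alpha\tauconv u$ because~$\tau$ is an o-Lebesgue topology, and \ref{part:equivalence_for_o-lebesgue_topologies_4} supplies an embedded sequence $(x_{\alpha_n})$ with $x_{\alpha_n}\uoconv u$. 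Being embedded, the indices increase, so $(x_{\alpha_n})$ is an increasing sequence bounded above by~$u$, and the lattice fact gives $\sup_n x_{\alpha_n}=u$. Thus every increasing net in~$\pos{E}$ that has a supremum admits an embedded increasing sequence with the same supremum.

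It then remains to upgrade this net statement to the countable sup property by the standard finite-suprema device. Given $D\subseteq E$ with $\sup D=u$, I would fix $d_0\in D$ and direct the finite subsets of~$D$ that contain~$d_0$ by inclusion; for such~$F$ put $y_F\coloneqq\sup F\ge d_0$. Then $(y_F-d_0)$ is an increasing net in~$\pos{E}$ with supremum $u-d_0$, so the previous step yields finite sets $F_1\subseteq F_2\subseteq\dotsb$ with $\sup_n(y_{F_n}-d_0)=u-d_0$, hence $\sup_n y_{F_n}=u$. As the $F_n$ increase, $\sup_n y_{F_n}=\sup\bigcup_n F_n$, so the at most countable set $\bigcup_n F_n\subseteq D$ has supremum~$u$, which is precisely the countable sup property. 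The genuinely delicate points, and the reason I single out \ref{part:equivalence_for_o-lebesgue_topologies_4}~$\Rightarrow$~\ref{part:equivalence_for_o-lebesgue_topologies_2} as the main obstacle, are the recovery of a supremum from mere uo-convergence in the lattice fact and the reduction to positive increasing nets that is needed before~\ref{part:equivalence_for_o-lebesgue_topologies_4} can be invoked.
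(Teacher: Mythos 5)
Your proposal is correct and follows essentially the same route as the paper: the identical cycle of implications, with all the work in \ref{part:equivalence_for_o-lebesgue_topologies_4}~$\Rightarrow$~\ref{part:equivalence_for_o-lebesgue_topologies_2}, where an increasing net with supremum is $\tau$-convergent by the o-Lebesgue property, the embedded uo-convergent sequence is upgraded to an increasing sequence with the same supremum, and the countable sup property follows. The only difference is that you prove from scratch (correctly) the two ingredients the paper handles by citation, namely that an order bounded uo-convergent sequence is order convergent and that the increasing-net characterisation yields the countable sup property via finite suprema, the latter being \cite[Theorem~23.2.(iii)]{luxemburg_zaanen_RIESZ_SPACES_VOLUME_I:1971} in the paper's proof.
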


\begin{proof}
The equivalence of~\ref{part:equivalence_for_o-lebesgue_topologies_1} and ~\ref{part:equivalence_for_o-lebesgue_topologies_2}, taken from the literature, was already in \cref{res:when_fatou_is_submetrisable_dedekind}. \cref{res:subsequence_fatou} shows that ~\ref{part:equivalence_for_o-lebesgue_topologies_1} implies~\ref{part:equivalence_for_o-lebesgue_topologies_3}, and it is clear that~\ref{part:equivalence_for_o-lebesgue_topologies_3} implies~\ref{part:equivalence_for_o-lebesgue_topologies_4}.

We show that~\ref{part:equivalence_for_o-lebesgue_topologies_4} implies~\ref{part:equivalence_for_o-lebesgue_topologies_2}. Suppose that $(x_\alpha)_{\alpha\in A}$ is a net in~$\pos{E}$ and that $x_\alpha\uparrow x$ for some $x\in\pos{E}$. Then $x_\alpha\tauconv x$. By assumption, there exists an embedded sequence $(x_{\alpha_n})_{n\geq 1}$ such that $x_{\alpha_n}\uoconv x$ as $n\to\infty$. Since the sequence is order bounded, we even have $x_{\alpha_n}\oconv x$. As the $\alpha_n$ are increasing, so are the $x_{\alpha_n}$, and we conclude that $x_{\alpha_n}\uparrow x$. Now \cite[Theorem~23.2.(iii)]{luxemburg_zaanen_RIESZ_SPACES_VOLUME_I:1971} shows that~$E$ has the countable sup property.
\end{proof}

\begin{remark}\quad
	\begin{enumerate}
	
	\item 
	We see that, for o-Lebesgue topologies, the existence of embedded uo-convergent subsequences in the restricted class of topologically convergent nets in~\ref{part:equivalence_for_o-lebesgue_topologies_4} already implies this existence in general topologically convergent nets in the more precise form in \cref{res:subsequence}.
	\item 
	For arbitrary Fatou topologies, \cref{res:equivalence_for_o-lebesgue_topologies} does not hold. Indeed, the combination of part~\ref{rem:fatou_topology_major_theorem_1} of \cref{rem:fatou_topology_major_theorem} and \cref{res:subsequence_fatou} shows that ~\ref{part:equivalence_for_o-lebesgue_topologies_3} does not imply ~\ref{part:equivalence_for_o-lebesgue_topologies_2} within this larger class of topologies. For Fatou topologies, it would be satisfactory if, for example,~\ref{part:equivalence_for_o-lebesgue_topologies_1} could still be shown to be equivalent to~\ref{part:equivalence_for_o-lebesgue_topologies_3}, or perhaps to the more precise version of the existence of embedded uo-convergent sequences as in \cref{res:subsequence}. In support of this, we can, however, presently only mention that the (metrisable) Fatou topology in part~\ref{rem:fatou_topology_major_theorem_1} of \cref{rem:fatou_topology_major_theorem} indeed has full carrier.
\end{enumerate}
\end{remark}	

Continuing with our applications, we now specialise within the Fatou topologies to the uo-Lebesgue topologies. Employing arguments as in the proof of the parts~\ref{part:subsequence_un_1} and~\ref{part:subsequence_un_2} of \cref{res:subsequence_un}, and recalling that, for o-Lebesgue topologies, having full carrier is equivalent to the vector lattice having the countable sup property, one obtains the following result.

\begin{theorem}\label{res:subsequence_uo-lebesgue}
	Let $(E,\tau)$ be a locally solid vector lattice, where~$\tau$ is a uo-Lebesgue topology. Suppose that $C_\tau=E$ or, equivalently, that~$E$ has the countable sup property; this is certainly the case when~$C_\tau$ has a countable order basis, and when~$\tau$ is metrisable. Then:
		\begin{enumerate}
			\item\label{part:subsequence_uo-lebesgue_1}
			every $\tau$-convergent net in~$E$ has an embedded sequence that is uo-conver\-gent as well as $\tau$-convergent to the same limit;
			
			\item\label{part:subsequence_uo-lebesgue_2}
			a sequence in~$E$ is $\tau$-convergent to $x\in E$ if and only if every subsequence has a further subsequence that is uo-convergent to~$x$.				
		\end{enumerate}
\end{theorem}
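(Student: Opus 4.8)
The plan is to deduce \cref{res:subsequence_uo-lebesgue} from the already-established \cref{res:subsequence_fatou} by exploiting the defining property of a uo-Lebesgue topology, namely that uo-convergence implies $\tau$-convergence. First I would note that the parenthetical equivalences in the hypothesis are free: a uo-Lebesgue topology is in particular an o-Lebesgue topology, so by the equivalence of \ref{dedekind2} and \ref{dedekind6} in \cref{res:when_fatou_is_submetrisable_dedekind}, the condition $C_\tau=E$ is the same as $E$ having the countable sup property; moreover \cref{res:C_tau=E_c.o.basis} guarantees $C_\tau=E$ when $C_\tau$ has a countable order basis, and $C_\tau=E$ is automatic when $\tau$ is metrisable. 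Thus the hypotheses of \cref{res:subsequence_fatou} are met.

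For part~\ref{part:subsequence_uo-lebesgue_1}, I would apply \cref{res:subsequence_fatou} (a uo-Lebesgue topology is a Fatou topology) to obtain, for any net $(x_\alpha)_{\alpha\in A}$ with $x_\alpha\tauconv x$, an embedded sequence $(x_{\alpha_n})_{n\geq 1}$ with $x_{\alpha_n}\uoconv x$. The key extra observation is that, because $\tau$ is a uo-Lebesgue topology, $x_{\alpha_n}\uoconv x$ forces $x_{\alpha_n}\tauconv x$ as well, so the same embedded sequence is simultaneously uo- and $\tau$-convergent to $x$. This is exactly the argument used for part~\ref{part:subsequence_un_1} of \cref{res:subsequence_un}, where the un-topology on an order continuous Banach lattice is the uo-Lebesgue topology; here I simply invoke the definition of uo-Lebesgue topology directly.

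For part~\ref{part:subsequence_uo-lebesgue_2}, I would mirror the proof of part~\ref{part:subsequence_un_2} of \cref{res:subsequence_un}. The forward implication is immediate from part~\ref{part:subsequence_uo-lebesgue_1}: if $(x_n)$ is $\tau$-convergent to $x$, then so is every subsequence, and each subsequence (viewed as a net) has an embedded sequence—that is, a further subsequence—that is uo-convergent to $x$. For the converse, I would argue by contradiction: assume every subsequence of $(x_n)$ has a further subsequence uo-convergent to $x$, but $(x_n)$ fails to be $\tau$-convergent to $x$. Then there is a $\tau$-neighbourhood $V$ of $x$ and a subsequence staying outside $V$; by hypothesis this subsequence has a further subsequence that is uo-convergent to $x$, hence $\tau$-convergent to $x$ (again using that $\tau$ is a uo-Lebesgue topology), so this further subsequence is eventually inside $V$, contradicting that it lies outside $V$.

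The reasoning is essentially routine once \cref{res:subsequence_fatou} and the defining implication ``uo-convergence $\Rightarrow$ $\tau$-convergence'' are in hand, so there is no serious obstacle; the only point demanding a little care is confirming the equivalence of the hypotheses $C_\tau=E$ and the countable sup property, which is not new but must be cited correctly from \cref{res:when_fatou_is_submetrisable_dedekind} since it holds only because a uo-Lebesgue topology is an o-Lebesgue topology.
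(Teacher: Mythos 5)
Your proposal is correct and follows exactly the route the paper intends: it invokes \cref{res:subsequence_fatou} for the embedded uo-convergent sequence, upgrades it to simultaneous $\tau$-convergence via the defining property of a uo-Lebesgue topology, and repeats the subsequence/contradiction argument from the proof of parts \ref{part:subsequence_un_1} and \ref{part:subsequence_un_2} of \cref{res:subsequence_un}, together with the correct citations for the equivalence of $C_\tau=E$ and the countable sup property. The paper's own ``proof'' is precisely this pointer to those arguments, so there is nothing to add.
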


\begin{remark}\label{rem:subsequence_uo-lebesgue}\quad
	\begin{enumerate}
		\item
		Suppose that~$E$ is a Banach lattice, and let~$\tau$ denote its norm topology. Then $C_{\unb\tau}=E$ by \cref{res:carrier_of_induced_unbounded_topology}. If~$E$ has an order continuous norm (which implies that it has the countable sup property), then $\unb\tau$ is the uo-Lebesgue topology on~$E$. We thus see that \cref{res:subsequence_uo-lebesgue} yields the parts~\ref{part:subsequence_un_1} and~\ref{part:subsequence_un_2} of \cref{res:subsequence_un} again, albeit not via the shortest route.		
		\item
		It follows from \cite[Lemma~6.8 and Corollary~6.9]{kandic_taylor:2018} that part~\ref{part:subsequence_uo-lebesgue_2} of \cref{res:subsequence_uo-lebesgue} holds when~$E$ has the countable sup property and a countable order basis. \cref{res:subsequence_uo-lebesgue}, however, shows that~$E$ having the countable sup property is alone already sufficient for part~\ref{part:subsequence_uo-lebesgue_2} to hold.
		
		\item
		For the sake of completeness we recall from \cite[Theorem~4.9]{kandic_taylor:2018} that a uo-Lebesgue topology on a vector lattice is metrisable if and only if the vector lattice has the countable sup property and a countable order basis.
	\end{enumerate}
\end{remark}

\cref{res:subsequence_uo-lebesgue} has the following consequence.
\begin{corollary}\label{res:subsequence_uo-lebesgue_induced}
	Let~$E$ be a vector lattice, and suppose that~$F$ is an order dense ideal in~$E$ admitting an o-Lebesgue topology~$\tau_F$. Then~$\unb_F\tau_F$ is a uo-Lebesgue topology on~$E$.
	Suppose that at least one of the following is satisfied:
	\begin{enumerate}[label=\textup{(\alph*)},ref=\textup{(\alph*)}]
		\item\label{part:subsequence_uo-lebesgue_induced_hypothesis_1}
		$E$ has the countable sup property;
	
		\item\label{part:subsequence_uo-lebesgue_induced_hypothesis_2}
		$\tau_F$ is metrisable and~$F$ has a countable order basis \uppars{which is always the case when~$\tau_F$ is a metrisable uo-Lebesgue topology};
	\end{enumerate}
	Then $C_{\unb_F\tau_F}=E$. Consequently:
	\begin{enumerate}
		\item\label{part:subsequence_uo-lebesgue_induced_1}
		every $\unb_F\tau_F$-convergent net in~$E$ has an embedded sequence that is uo-convergent as well as $\unb_F\tau_F$-convergent to the same limit;
		
		\item\label{part:subsequence_uo-lebesgue_induced_2}
		a sequence in~$E$ is $\unb_F\tau_F$-convergent to $x\in E$ if and only if every subsequence has a further subsequence that is uo-convergent to~$x$.
	\end{enumerate}
\end{corollary}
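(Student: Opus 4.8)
The plan is to reduce everything to \cref{res:subsequence_uo-lebesgue}: once I know that $\unb_F\tau_F$ is a uo-Lebesgue topology on~$E$ and that $C_{\unb_F\tau_F}=E$ under either hypothesis, parts~\ref{part:subsequence_uo-lebesgue_induced_1} and~\ref{part:subsequence_uo-lebesgue_induced_2} follow at once from the corresponding parts of that theorem applied to the topology $\unb_F\tau_F$ on~$E$. So there are really only two things to establish, namely the uo-Lebesgue property and the full carrier.

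For the first, I would argue directly from the neighbourhood basis $\{U_{V,y}\}$ recalled in \cref{subsec:locally_solid_topologies_on_vector_lattices}. By linearity it suffices to show that $x_\alpha\uoconv 0$ in~$E$ forces $x_\alpha\to 0$ in $\unb_F\tau_F$. Fixing $y\in F$ and a solid $\tau_F$-neighbourhood~$V$ of zero, uo-convergence gives $\abs{x_\alpha}\wedge\abs{y}\oconv 0$ in~$E$, with all terms in the ideal~$F$ and dominated by the fixed element $\abs{y}\in F$. The key point is that if a net $u_\beta\downarrow 0$ in~$E$ witnesses this convergence, then $u_\beta\wedge\abs{y}\downarrow 0$ lies in~$F$ and still eventually dominates $\abs{x_\alpha}\wedge\abs{y}$; since~$F$ is a regular sublattice, this decreasing net witnesses $\abs{x_\alpha}\wedge\abs{y}\oconv 0$ in~$F$. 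The o-Lebesgue property of~$\tau_F$ then yields $\abs{x_\alpha}\wedge\abs{y}\xrightarrow{\tau_F}0$, that is, $x_\alpha\in U_{V,y}$ eventually, which is exactly $\unb_F\tau_F$-convergence to zero.

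For the carrier I would treat the two hypotheses separately. Under~\ref{part:subsequence_uo-lebesgue_induced_hypothesis_1}, a uo-Lebesgue topology is in particular a Fatou topology, so $C_{\unb_F\tau_F}=E$ follows directly from the countable sup property of~$E$. Under~\ref{part:subsequence_uo-lebesgue_induced_hypothesis_2}, \cref{res:carrier_of_induced_unbounded_topology} gives $F\subseteq C_{\unb_F\tau_F}$, so a countable order basis $\{e_n:n\geq 1\}$ of~$F$ lies in $C_{\unb_F\tau_F}$; since~$F$ is order dense in~$E$, this set is an order basis of~$E$, and because disjoint complements in the intermediate sublattice $C_{\unb_F\tau_F}$ are the restrictions of those in~$E$, it is an order basis of $C_{\unb_F\tau_F}$ as well. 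Thus $\unb_F\tau_F$ is a Fatou topology whose carrier has a countable order basis, and \cref{res:C_tau=E_c.o.basis} forces $C_{\unb_F\tau_F}=E$. The parenthetical assertion is settled by recalling from \cref{rem:subsequence_uo-lebesgue} that a metrisable uo-Lebesgue topology forces a countable order basis.

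All the individual steps are short, so the difficulty is bookkeeping rather than depth. The two places where care is needed are making sure that the order convergence of $\abs{x_\alpha}\wedge\abs{y}$ genuinely descends from~$E$ to~$F$ (which uses the ideal property, via domination by $\abs{y}$, and not merely regularity), and verifying that the countable order basis of~$F$ really qualifies as a countable order basis of $C_{\unb_F\tau_F}$ so that \cref{res:C_tau=E_c.o.basis} applies verbatim.
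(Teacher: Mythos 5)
Your proposal is correct and follows essentially the same route as the paper: everything is reduced to \cref{res:subsequence_uo-lebesgue}, with hypothesis (a) handled via the equivalence of full carrier with the countable sup property for uo-Lebesgue topologies, and hypothesis (b) handled by combining \cref{res:carrier_of_induced_unbounded_topology} with the order density of~$F$ to see that the countable order basis of~$F$ is one for $C_{\unb_F\tau_F}$. The only deviation is that you prove directly that $\unb_F\tau_F$ is a uo-Lebesgue topology (your domination-by-$\abs{y}$ argument is sound), whereas the paper simply cites an external reference for this fact.
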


\begin{proof}
It follows from \cite[Proposition~4.1]{deng_de_jeu:2022a} that $\unb_F\tau_F$ is a uo-Lebesgue topology on~$E$. By \cite[Theorem~4.9]{kandic_taylor:2018},~$F$ has a countable order basis when~$\tau_F$ is a metrisable uo-Lebesgue topology.

Suppose that~$\tau_F$ is metrisable and that~$F$ has a countable order basis. According to \cref{res:carrier_of_induced_unbounded_topology}, we have $F\subseteq C_{\unb_F\tau_F}$. Since~$F$ is then an order dense vector sublattice of $C_{\unb_F\tau_F}$, $C_{\unb_F\tau_F}$ also has a countable order basis. Now we can apply \cref{res:subsequence_uo-lebesgue}.
\end{proof}

\begin{remark}\quad
\begin{enumerate}
	\item
	When~$E$ is a vector lattice containing a Banach lattice~$F$ with an order continuous norm as an order dense ideal, \cref{res:subsequence_uo-lebesgue_induced} specifies to a considerable improvement of \cite[Theorem~9.5]{kandic_li_troitsky:2018}. An inspection of \cite[Example~9.6]{kandic_li_troitsky:2018} shows that the condition in~\ref{part:subsequence_uo-lebesgue_induced_hypothesis_2} that~$F$ have a countable order basis cannot be omitted.
	
	\item
	A vector lattice~$E$ with the property that~$\ocdual{E}$ separates its points admits an o-Lebesgue topology; see \cite[Lemma~5.1]{deng_de_jeu:2022a}, for example. With this in mind, it is easy to see that \cref{res:subsequence_uo-lebesgue_induced} implies \cite[Theorems~7.8 and~7.13]{deng_de_jeu:2022a}.
	\end{enumerate}
\end{remark}

We continue with a consequence of \cref{res:subsequence_uo-lebesgue} in measure theory.

\begin{corollary}\label{res:subsequence_measure_spaces}
	Let $(X,\Sigma,\mu)$ be a semi-finite measure space, and let~$E$ be the ideal in $\Ell^0(X,\Sigma,\mu)$ consisting of \uppars{equivalence classes of} measurable functions with $\sigma$-finite supports. Then~$E$ admits a uo-Lebesgue topology, which is the topology of local convergence in measure, and~$E$ has the countable sup property. Consequently:
	\begin{enumerate}
		\item\label{part:subsequence_measure_spaces_1}
		every $\mu^\ast$-convergent net in~$E$ has an embedded sequence that is almost everywhere convergent as well as $\mu^\ast$-convergent to the same limit;
		
		\item\label{part:subsequence_measure_spaces_2}
		a sequence in~$E$ is $\mu^\ast$-convergent to $f\in E$ if and only if every subsequence has a further subsequence that is almost everywhere convergent to~$f$.
	\end{enumerate}
\end{corollary}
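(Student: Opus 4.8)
The plan is to check the hypotheses of \cref{res:subsequence_uo-lebesgue} for the ideal~$E$ and then to translate its two conclusions into measure-theoretic language through the dictionary ``$\mu^\ast$-convergence $=$ $\tau$-convergence'' and ``almost everywhere convergence $=$ uo-convergence for sequences in~$E$''. Concretely, I would establish three facts in turn: that~$E$ carries a uo-Lebesgue topology; that this topology is exactly local convergence in measure; and that~$E$ has the countable sup property. Once these are available, \ref{part:subsequence_measure_spaces_1} and~\ref{part:subsequence_measure_spaces_2} are obtained by substituting ``$\mu^\ast$'' for ``$\tau$'' and ``a.e.'' for ``uo'' in the two parts of \cref{res:subsequence_uo-lebesgue}.

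To produce the uo-Lebesgue topology I would exhibit a convenient order dense ideal and invoke the unbounded construction. Since~$\mu$ is semi-finite, $F\coloneqq\Ell^1(X,\Sigma,\mu)$ is an order dense ideal of~$E$: an integrable function has $\sigma$-finite support, so $F\subseteq E$, and given $0<h\in E$ one uses semi-finiteness to find~$B$ with $0<\mu(B)<\infty$ inside some level set $\{h\geq 1/n\}$, so that $0<\tfrac1n\mathbb{1}_B\leq h$ lies in~$F$. The norm topology~$\tau_F$ on~$F$ is an o-Lebesgue topology, whence \cref{res:subsequence_uo-lebesgue_induced} shows that~$\unb_F\tau_F$ is the uo-Lebesgue topology on~$E$. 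A direct comparison of basic neighbourhoods identifies~$\unb_F\tau_F$ with~$\mu^\ast$: the defining condition $\int\abs{x}\wedge\abs{y}\,\dc\mu\to 0$ for all $y\in F$, tested against $y=\mathbb{1}_A$ with $\mu(A)<\infty$, gives local convergence in measure, and conversely splitting the integral over $\{\abs{y}>\delta\}$ and its complement recovers it from $\mu^\ast$-convergence. Uniqueness of the uo-Lebesgue topology then justifies speaking of \emph{the} uo-Lebesgue topology and identifying it with~$\mu^\ast$.

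For the countable sup property I would reduce to a band on which the measure is $\sigma$-finite. Given a subset of~$E$ with supremum~$u\in E$, a translation by a fixed member reduces matters to a family in~$\pos E$ dominated by~$u$, each of whose members is then supported on the $\sigma$-finite set $S\coloneqq\{u\neq 0\}$. The functions supported on~$S$ constitute a band of~$E$, order isomorphic to $\Ell^0(S,\Sigma_S,\mu\vert_S)$, and suprema computed in a band agree with those computed in~$E$. As $\mu\vert_S$ is $\sigma$-finite, $\Ell^0(S,\Sigma_S,\mu\vert_S)$ has the countable sup property by \cite[Proposition~6.5]{kandic_taylor:2018}, so the family has a countable subfamily with the same supremum. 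Hence~$E$ has the countable sup property, and in particular $C_\tau=E$, which is the hypothesis of \cref{res:subsequence_uo-lebesgue}.

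It remains to verify the convergence dictionary, and this is the step I expect to require the most care. The coincidence of $\mu^\ast$-convergence with $\tau$-convergence was settled above; the delicate point is that a sequence in~$E$ is uo-convergent to~$f$ in~$E$ if and only if it converges to~$f$ almost everywhere. Here $\sigma$-finiteness re-enters: for a sequence $(f_n)$ the set $S\coloneqq\{f\neq 0\}\cup\bigcup_n\{f_n\neq 0\}$ is $\sigma$-finite, all functions vanish off~$S$, and on the $\sigma$-finite space~$S$ one has the classical equivalence of sequential uo-convergence with a.e.\ convergence. Since the band of functions supported on~$S$ is a regular vector sublattice of both~$E$ and $\Ell^0(X,\Sigma,\mu)$, uo-convergence of $(f_n)$ computed in~$E$ agrees with that in this band and hence with a.e.\ convergence. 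Feeding this equivalence, together with $\tau=\mu^\ast$, into \cref{res:subsequence_uo-lebesgue} produces \ref{part:subsequence_measure_spaces_1} and~\ref{part:subsequence_measure_spaces_2}.
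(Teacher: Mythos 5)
Your argument is correct, and its endgame is the same as the paper's: establish that~$E$ carries a uo-Lebesgue topology equal to local convergence in measure and has the countable sup property, apply \cref{res:subsequence_uo-lebesgue}, and translate sequential uo-convergence into almost everywhere convergence. Where you genuinely diverge is in how the three supporting facts are obtained. The paper gets the uo-Lebesgue topology by restricting the known uo-Lebesgue topology of $\Ell^0(X,\Sigma,\mu)$ (citing Conradie and \cite[Theorem~6.1]{deng_de_jeu:2022a}) to the regular vector sublattice~$E$, whereas you build it from below as $\unb_F\tau_F$ with $F=\Ell^1(X,\Sigma,\mu)$, an order dense ideal of~$E$ by semi-finiteness, and then identify $\unb_F\tau_F$ with~$\mu^\ast$ by hand; this is sound and more self-contained, though the converse direction of the identification is only gestured at (one needs the further split of $\int\abs{x_\alpha}\wedge\abs{y}\,\dc\mu$ over $\{\abs{y}>\delta\}\cap\{\abs{x_\alpha}\geq\varepsilon\}$ and its complement, plus absolute continuity of the integral of $\abs{y}$). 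For the countable sup property the paper notes that every order bounded disjoint system of positive elements of~$E$ is countable and invokes \cite[Theorem~29.3(vi)]{luxemburg_zaanen_RIESZ_SPACES_VOLUME_I:1971}, while you reduce, via positive parts and the band of functions supported on a single $\sigma$-finite set, to $\Ell^0$ of a $\sigma$-finite measure and cite \cite[Proposition~6.5]{kandic_taylor:2018}; your comparison of suprema in a band with suprema in~$E$ is the one point that deserves a sentence of justification (an upper bound in~$E$ may be replaced by its infimum with the supremum, which then lies in the band), but the argument is correct and arguably more transparent. Finally, the paper simply cites \cite[Remark~3.4]{gao_troitsky_xanthos:2017} for the equivalence of sequential uo-convergence with a.e.\ convergence in regular vector sublattices of $\Ell^0(X,\Sigma,\mu)$, valid for arbitrary measure spaces, whereas you rederive it through the same $\sigma$-finite band device. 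In short: no gaps, just a more constructive route to the same three inputs of \cref{res:subsequence_uo-lebesgue}.
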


\begin{proof}
	It was already observed in \cite[p.292]{conradie:2005} that $\Ell^0(X,\Sigma,\mu)$ has a uo-Lebesgue topology, and that it is the topology of local convergence in measure. A more detailed statement and its proof can be found as \cite[Theorem~6.1]{deng_de_jeu:2022a}. The restriction of this topology to the regular vector sublattice~$E$ is a uo-Lebesgue topology on~$E$. A moment's thought shows that the $\sigma$-finiteness of the supports of the elements of~$E$ implies that every disjoint system of positive elements of~$E$ which is bounded from above in~$E$ is at most countable. Hence~$E$ has the countable sup property by \cite[Theorem~29.3(vi)]{luxemburg_zaanen_RIESZ_SPACES_VOLUME_I:1971}. We can now apply   \cref{res:subsequence_uo-lebesgue}. The proof is then completed by using that, for arbitrary measure spaces, uo-convergence of sequences in regular vector sublattices of $\Ell^0(X,\Sigma,\mu)$ is equivalent to convergence almost everywhere; see \cite[Remark~3.4]{gao_troitsky_xanthos:2017}.
\end{proof}

\begin{remark}\quad
	\begin{enumerate}
		\item
		When~$\mu$ is $\sigma$-finite, one has~$E=\Ell^0(X,\Sigma,\mu)$ in \cref{res:subsequence_measure_spaces}. In this case, part~\ref{part:subsequence_measure_spaces_1} yields \cite[Theorem~7.11]{deng_de_jeu:2022a}, and  part~\ref{part:subsequence_measure_spaces_2} yields \cite[245K]{fremlin_MEASURE_THEORY_VOLUME_2:2003}.
		
		\item For the sake of completeness we recall (as was already mentioned in \cref{sec:introduction_and_overview}) that, for arbitrary measure spaces, a sequence in~$\Ell^0(X,\Sigma,\mu)$ that is (globally) convergent in measure has a subsequence that converges almost everywhere to the same limit; see \cite[Theorem~19.3]{aliprantis_burkinshaw_PRINCIPLES_OF_REAL_ANALYSIS_THIRD_EDITION:1998} or \cite[Theorem~2.30]{folland_REAL_ANALYSIS_SECOND_EDITION:1999}, for example. This goes back to F.\ Riesz in \cite{riesz:1909}, correcting an earlier statement by Lebesgue in \cite{lebesgue_LECONS_SUR_LES_SERIES_TRIGONOMETRIQUES:1906}.	
	\end{enumerate}
\end{remark}

We continue with an application of \cref{res:subsequence_uo-lebesgue} to weak closures. For this, we need some terminology. Suppose that~$E$ is a vector lattice, and that~$I$ is an ideal in~$\ocdual{E}$ that separates the points of~$E$. In this case, the lattice seminorms~$p_\varphi$, defined for $\varphi\in I$ by setting $p_\varphi(x)\coloneqq\abs{\varphi}(\abs{x})$ for $x\in E$, define a locally convex o-Lebesgue topology $\abs{\sigma}(E,I)$ on~$E$. The uo-Lebesgue topology  $\unb\abs{\sigma}(E,I)$ on~$E$, resulting from this so-called absolute weak topology $\abs{\sigma}(E,I)$ on~$E$ that is induced by~$I$, plays a part in the following result.

\begin{corollary}\label{res:convex}
	Let~$E$ be a vector lattice, and suppose that~$\ocdual{E}$ has an ideal~$I$ that separates the points of~$E$.  Suppose that $C_{\unb\abs{\sigma}(E,I)}=E$ or, equivalently, that~$E$ has the countable sup property; this is certainly the case when $C_{\unb\abs{\sigma}(E,I)}$ has a countable order basis, and when $\unb\abs{\sigma}(E,I)$ is metrisable. Let~$S$ be a convex subset of~$E$, and take $x\in \overline{S}^{\sigma(E,I)}$. Then there exists a sequence in~$S$ that is uo-convergent to~$x$.
\end{corollary}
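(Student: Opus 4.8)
The plan is to exploit the convexity of~$S$ to transfer~$x$ from the weak closure $\overline{S}^{\sigma(E,I)}$ to the closure in the finer absolute weak topology $\abs{\sigma}(E,I)$, then to descend to the still coarser unbounded topology $\unb\abs{\sigma}(E,I)$, and finally to invoke \cref{res:subsequence_uo-lebesgue} to extract the required uo-convergent sequence.

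First I would recall that $\abs{\sigma}(E,I)$ is a locally convex topology on~$E$ whose topological dual equals~$I$; since~$I$ separates the points of~$E$, the weak topology $\sigma(E,I)$ of the dual pair $\langle E,I\rangle$ is Hausdorff and is precisely the weak topology associated with the locally convex space $(E,\abs{\sigma}(E,I))$. As both topologies are thus consistent with the dual pair $\langle E,I\rangle$, Mazur's theorem applies to the convex set~$S$ and yields
\[
\overline{S}^{\sigma(E,I)}=\overline{S}^{\abs{\sigma}(E,I)}.
\]
In particular, $x\in\overline{S}^{\abs{\sigma}(E,I)}$.

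Next, since the unbounded topology $\unb\abs{\sigma}(E,I)$ is coarser than $\abs{\sigma}(E,I)$, the $\abs{\sigma}(E,I)$-closure of~$S$ is contained in its $\unb\abs{\sigma}(E,I)$-closure, so that $x\in\overline{S}^{\unb\abs{\sigma}(E,I)}$. Hence there is a net $(x_\alpha)_{\alpha\in A}$ in~$S$ with $x_\alpha\to x$ in $\unb\abs{\sigma}(E,I)$. By hypothesis $\unb\abs{\sigma}(E,I)$ is a uo-Lebesgue topology with $C_{\unb\abs{\sigma}(E,I)}=E$, so part~\ref{part:subsequence_uo-lebesgue_1} of \cref{res:subsequence_uo-lebesgue} provides an embedded sequence $(x_{\alpha_n})_{n\geq 1}$ of this net that is uo-convergent to~$x$. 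Each~$x_{\alpha_n}$ lies in~$S$, so this embedded sequence is the required sequence in~$S$ that is uo-convergent to~$x$.

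The one genuinely non-formal step is the passage across the first equality: it is convexity alone, through Mazur's theorem, that upgrades mere membership in the weak closure to membership in the much finer absolute weak closure. Without convexity the two closures may differ, and there would be no reason for~$x$ to be the limit of an $\abs{\sigma}(E,I)$-convergent---let alone uo-convergent---net drawn from~$S$. Once this bridge is established, the remainder is a routine combination of the coarsening of topologies with the embedded-sequence machinery of \cref{res:subsequence_uo-lebesgue}.
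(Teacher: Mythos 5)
Your proposal is correct and follows essentially the same route as the paper: the paper also passes from the weak closure to the $\abs{\sigma}(E,I)$-closure of the convex set~$S$ via the fact that both topologies have topological dual~$I$ (cited there as Kaplan's theorem, which is the fact you "recall"), then observes that $\unb\abs{\sigma}(E,I)$ is coarser than $\abs{\sigma}(E,I)$, and concludes with \cref{res:subsequence_uo-lebesgue}.
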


\begin{proof}
	By Kaplan's theorem (see \cite[Theorem~2.33]{aliprantis_burkinshaw_LOCALLY_SOLID_RIESZ_SPACES_WITH_APPLICATIONS_TO_ECONOMICS_SECOND_EDITION:2003}, for example), the topological duals of the locally convex topological vector spaces $(E, \abs{\sigma}(E,I))$ and $(E, \sigma(E,I))$ are both equal to~$I$. Hence $\overline{S}^{\sigma(E,I)}=\overline{S}^{\abs{\sigma}(E,I)}$, so that there exists a net $(x_\alpha)_{\alpha\in A}$ in~$S$ with $x_\alpha\conv{\abs{\sigma}(E,I)} x$. Then certainly $x_\alpha\conv{\unb\abs{\sigma}(E,I)} x$. We can now apply \cref{res:subsequence_uo-lebesgue}.
\end{proof}

\begin{remark}
	In \cite[Theorem~4.1]{gao_leung_xanthos:2018}, the existence of a uo-convergent sequence as in \cref{res:convex} was established under the single hypothesis that the ideal $I$ in $\ocdual{E}$ contain  a strictly positive linear functional. This is a special case of \cref{res:convex}. Indeed, if a vector lattice has a strictly positive linear functional~$\varphi$, then it has the countable sup property by \cite[Theorem~1.45]{aliprantis_burkinshaw_LOCALLY_SOLID_RIESZ_SPACES_WITH_APPLICATIONS_TO_ECONOMICS_SECOND_EDITION:2003}.  Furthermore, the principal ideal in~$\odual{E}$ generated by~$\varphi$ then already separates the points of~$E$ by \cite[Proposition~2.1(2)]{deng_de_jeu:2022a}, so this is then certainly true for~$I$.
	
	We refer to \cite{gao_leung_xanthos:2018} for further elaborations on this special case.
\end{remark}

We continue with an application of \cref{res:subsequence_uo-lebesgue} to adherences and closures. As a preparation, we recall some notation from \cite[Section~8]{deng_de_jeu:2022b}. Let~$A$ be a subset of a vector lattice~$E$. We write
\[
\suoadh{A}=\left\{x\in E: \text{ there exists a sequence }(x_n)_{n\geq 1}\text{ in } A \text{ with }x_n\uoconv x \text{ in } E\right\}
\]
and
\[
\uoadh{A}=\left\{x\in E: \text{ there exists a net }(x_\alpha)_\alpha\text{ in } A \text{ with }x_\alpha\uoconv x  \text{ in } E\right\}
\]
for the \emph{\suoadhtext} and \emph{\uoadhtext} of~$A$ in~$E$, respectively. When~$\tau$ is a topology on~$E$, one similarly defines the $\sigma$-$\tau$-adherence and the $\tau$-adherence of~$A$, the latter simply being its $\tau$-closure~$\overline{A}^\tau$.

A subset~$A$ of~$E$ is said to be \emph{$\sigma$-uo-closed} when $\suoadh{A}=A$. The $\sigma$-uo-closed subsets of~$E$ are the closed subsets of a topology on~$E$, which is called the $\sigma$-uo-topology. The closure of a subset~$A$ in this topology is denoted by $\suoclos{A}$. We have $\suoclos{\suoadh{A}}=\suoclos{A}$, and~$A$ is closed in the $\sigma$-uo-topology if and only if $\suoadh{A}=\suoclos{A}$. We refer to \cite[Lemmas~2.3 and~2.4]{deng_de_jeu:2022b} for details. Similar statements hold for uo-adherences and $\sigma$-$\tau$-adherences and the associated topologies.

The conclusion and the proof of the following result are completely analogous to those of \cite[Theorem~8.1]{deng_de_jeu:2022b}. Its hypotheses are considerably weaker, however, since we now have \cref{res:subsequence_uo-lebesgue} available, rather than \cite[Theorem~7.8]{deng_de_jeu:2022b} with its much more restrictive hypotheses.

\begin{theorem}\label{res:seven_sets_equal}
Let~$E$ be a vector lattice that admits a uo-Lebesgue topology~$\tau$, and let~$A$ be a subset of~$E$. Suppose that $C_\tau=E$ or, equivalently, that~$E$ has the countable sup property; this is certainly the case when~$C_\tau$ has a countable order basis, and when~$\tau$ is metrisable.  Then the following seven subsets of~$E$ are all equal:
\begin{enumerate}
	\item
	$\sadh{\tau}{A}$ and $\overline{A}^{\sigma\text{-}\tau}$;
	
	\item
	$\suoadh{A}$ and $\suoclos{A}$;
	
	\item
	$\uoadh{A}$ and $\uoclos{A}$;
	
	\item
	$\overline{A}^\tau$.
\end{enumerate}

In particular, the $\sigma$-$\tau$-topology, the $\sigma$-uo-topology, and the uo-topology on~$E$ all coincide with~$\tau$.
\end{theorem}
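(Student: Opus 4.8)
The plan is to show that all seven sets coincide with the single set $\overline{A}^\tau$, using \cref{res:subsequence_uo-lebesgue} as the one substantial ingredient and otherwise only soft relations between adherences and closures. Throughout I would use that, since $\tau$ is a uo-Lebesgue topology, uo-convergence of a net (or sequence) implies $\tau$-convergence to the same limit, and that $\tau$, being a genuine topology, has $\overline{A}^\tau$ equal to its $\tau$-adherence (the set of $\tau$-limits of nets in~$A$).

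First I would record the formal inclusions. For each convergence structure, the adherence is contained in the associated closure, giving $\sadh{\tau}{A}\subseteq\overline{A}^{\sigma\text{-}\tau}$, $\suoadh{A}\subseteq\suoclos{A}$, and $\uoadh{A}\subseteq\uoclos{A}$. Since a sequence is a net, and since uo-convergence implies $\tau$-convergence, one obtains the chains
\[
\suoadh{A}\subseteq\sadh{\tau}{A}\subseteq\overline{A}^\tau \quad\text{and}\quad \suoadh{A}\subseteq\uoadh{A}\subseteq\overline{A}^\tau.
\]

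The only non-formal step, which I expect to be the crux, is the reverse inclusion $\overline{A}^\tau\subseteq\suoadh{A}$. Given $x\in\overline{A}^\tau$, I would choose a net in~$A$ that is $\tau$-convergent to~$x$; by \cref{res:subsequence_uo-lebesgue}\ref{part:subsequence_uo-lebesgue_1} this net has an embedded sequence in~$A$ that is uo-convergent to~$x$, whence $x\in\suoadh{A}$. This is precisely where the hypothesis $C_\tau=E$ (equivalently, the countable sup property) is used. Combining this with the chains above yields
\[
\suoadh{A}=\sadh{\tau}{A}=\uoadh{A}=\overline{A}^\tau,
\]
so the three adherences and the $\tau$-closure all agree.

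Finally I would dispose of the three remaining closures by showing that the common set $B\coloneqq\overline{A}^\tau$ is already closed in each of the $\sigma$-$\tau$-, $\sigma$-uo-, and uo-topologies. Indeed $B$ is $\tau$-closed, hence $\sigma$-$\tau$-closed since sequences are nets; and because uo-convergence implies $\tau$-convergence, any uo-limit of a net (resp.\ sequence) in~$B$ is a $\tau$-limit and therefore lies in~$B$, so $B$ is uo-closed and $\sigma$-uo-closed. As $B\supseteq A$, minimality of each closure gives $\overline{A}^{\sigma\text{-}\tau}\subseteq B$, $\suoclos{A}\subseteq B$, and $\uoclos{A}\subseteq B$; together with the inclusions $\sadh{\tau}{A}\subseteq\overline{A}^{\sigma\text{-}\tau}$, $\suoadh{A}\subseteq\suoclos{A}$, $\uoadh{A}\subseteq\uoclos{A}$ and the equalities of the previous paragraph, this forces all seven sets to equal~$B$. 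The concluding assertion that the $\sigma$-$\tau$-, $\sigma$-uo-, and uo-topologies coincide with~$\tau$ then follows immediately, since their closure operators have been shown to agree on every subset of~$E$.
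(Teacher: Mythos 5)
Your proposal is correct and takes essentially the same route as the paper, which gives no new argument but defers to the analogous proof of \cite[Theorem~8.1]{deng_de_jeu:2022b} with \cref{res:subsequence_uo-lebesgue} supplying the single substantive ingredient --- precisely the inclusion $\overline{A}^\tau\subseteq\suoadh{A}$ that you identify as the crux. The remaining identifications via the soft inclusions and the stability of $\overline{A}^\tau$ under uo-limits are exactly as in that cited argument.
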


On combining \cref{res:subsequence_measure_spaces} and \cref{res:seven_sets_equal}, and recalling again from \cite[Remark~3.4]{gao_troitsky_xanthos:2017} that uo-convergence of sequences in regular vector   sublattices of $\Ell^0(X,\Sigma,\mu)$ is the same as almost everywhere convergence, we obtain the following.

 \begin{corollary}\label{res:subset_fremlind_improved}
 	Let $(X,\Sigma,\mu)$ be a semi-finite measure space, and let~$E$ be the ideal in $\Ell^0(X,\Sigma,\mu)$ consisting of \uppars{equivalence classes of} measurable functions with $\sigma$-finite supports. Let~$A$ be a subset of~$E$. Then~$A$ is a closed subset of~$E$ in the topology of local convergence in measure if and only if it contains the almost everywhere limits of sequences in~$A$.
 \end{corollary}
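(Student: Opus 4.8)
The plan is to recognise that, on the ideal~$E$, the topology of local convergence in measure \emph{is} the uo-Lebesgue topology to which \cref{res:seven_sets_equal} applies, and then simply to read off the assertion from the equality of the seven sets, using the measure-theoretic identification of uo-convergence with a.e.\ convergence. No genuinely new estimate is needed; the work is in checking that the hypotheses of the earlier results are in force.

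First I would invoke \cref{res:subsequence_measure_spaces}. It supplies exactly what is required: the ideal~$E$ admits a uo-Lebesgue topology~$\tau$, this~$\tau$ is precisely the topology~$\mu^\ast$ of local convergence in measure on~$E$, and~$E$ has the countable sup property. By the equivalence recorded in \cref{res:when_fatou_is_submetrisable_dedekind} (or in the hypothesis list of \cref{res:seven_sets_equal} itself), the countable sup property gives $C_\tau=E$. Hence all the hypotheses of \cref{res:seven_sets_equal} are met for this particular~$\tau=\mu^\ast$.

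Next I would apply \cref{res:seven_sets_equal} to the subset~$A$. Among the seven sets shown there to coincide are the $\tau$-closure~$\overline{A}^\tau$ and the $\sigma$-uo-adherence $\suoadh{A}$, so $\overline{A}^{\mu^\ast}=\suoadh{A}$. Now~$A$ is closed in the $\mu^\ast$-topology if and only if $A=\overline{A}^{\mu^\ast}=\suoadh{A}$; since $A\subseteq\suoadh{A}$ always holds (via constant sequences), this is equivalent to the single inclusion $\suoadh{A}\subseteq A$, that is, to~$A$ containing the uo-limit of every uo-convergent sequence drawn from~$A$.

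Finally I would translate uo-convergence into a.e.\ convergence. Because~$E$ is an ideal in $\Ell^0(X,\Sigma,\mu)$, it is a regular vector sublattice, so by \cite[Remark~3.4]{gao_troitsky_xanthos:2017} uo-convergence of sequences in~$E$ coincides with convergence almost everywhere. Consequently $\suoadh{A}$ is exactly the set of a.e.\ limits of sequences in~$A$, and the condition $\suoadh{A}\subseteq A$ says precisely that~$A$ contains the a.e.\ limits of its sequences, which is the stated characterisation. The only point demanding any care is the verification that a uo-Lebesgue topology with full carrier is genuinely available on~$E$ so that \cref{res:seven_sets_equal} may be invoked, and this is furnished by \cref{res:subsequence_measure_spaces}; beyond that the argument is a direct assembly of the cited results.
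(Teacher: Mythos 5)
Your proposal is correct and follows essentially the same route as the paper, which likewise obtains the corollary by combining \cref{res:subsequence_measure_spaces} (supplying the uo-Lebesgue topology equal to local convergence in measure and the countable sup property of~$E$) with \cref{res:seven_sets_equal} and the identification of sequential uo-convergence with almost everywhere convergence from \cite[Remark~3.4]{gao_troitsky_xanthos:2017}. The only difference is that you spell out the elementary reduction of closedness to the inclusion $\suoadh{A}\subseteq A$, which the paper leaves implicit.
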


\begin{remark}
	For a $\sigma$-finite measure, \cref{res:subset_fremlind_improved} is \cite[245L(b)]{fremlin_MEASURE_THEORY_VOLUME_2:2003}.
\end{remark}

\subsection*{Acknowledgements} The authors thank Evgeny Bilokopytov for pointing out that an earlier version of \cref{res:when_is_submetrisable} could be strengthened to the current one.

\bibliographystyle{plain}
\bibliography{general_bibliography}

\def\cprime{$'$}
  \def\lfhook#1{\setbox0=\hbox{#1}{\ooalign{\hidewidth\lower1.5ex\hbox{'}\hidewidth\crcr\unhbox0}}}
\begin{thebibliography}{10}

\bibitem{aliprantis_burkinshaw_PRINCIPLES_OF_REAL_ANALYSIS_THIRD_EDITION:1998}
C.D. Aliprantis and O.~Burkinshaw.
\newblock {\em Principles of real analysis}.
\newblock Academic Press, Inc., San Diego, CA, third edition, 1998.

\bibitem{aliprantis_burkinshaw_LOCALLY_SOLID_RIESZ_SPACES_WITH_APPLICATIONS_TO_ECONOMICS_SECOND_EDITION:2003}
C.D. Aliprantis and O.~Burkinshaw.
\newblock {\em Locally solid {R}iesz spaces with applications to economics},
  volume 105 of {\em Mathematical Surveys and Monographs}.
\newblock American Mathematical Society, Providence, RI, second edition, 2003.

\bibitem{aliprantis_burkinshaw_POSITIVE_OPERATORS_SPRINGER_REPRINT:2006}
C.D. Aliprantis and O.~Burkinshaw.
\newblock {\em Positive operators}.
\newblock Springer, Dordrecht, 2006.
\newblock Reprint of the 1985 original.

\bibitem{conradie:2005}
J.~Conradie.
\newblock The coarsest {H}ausdorff {L}ebesgue topology.
\newblock {\em Quaest. Math.}, 28(3):287--304, 2005.

\bibitem{deng_de_jeu:2022b}
Y.~Deng and M.~de~Jeu.
\newblock Convergence structures and {H}ausdorff uo-{L}ebesgue topologies on
  vector lattice algebras of operators.
\newblock {\em Positivity}, 26(4):Paper No. 4, 2022.

\bibitem{deng_de_jeu:2022a}
Y.~Deng and M.~de~Jeu.
\newblock Vector lattices with a {H}ausdorff uo-{L}ebesgue topology.
\newblock {\em J. Math. Anal. Appl.}, 505(1):125455, 2022.

\bibitem{deng_o_brien_troitsky:2017}
Y.~Deng, M.~O'Brien, and V.G. Troitsky.
\newblock Unbounded norm convergence in {B}anach lattices.
\newblock {\em Positivity}, 21(3):963--974, 2017.

\bibitem{folland_REAL_ANALYSIS_SECOND_EDITION:1999}
G.B. Folland.
\newblock {\em Real analysis. {M}odern techniques and their applications}.
\newblock Pure and Applied Mathematics. John Wiley \& Sons, Inc., New York,
  second edition, 1999.

\bibitem{fremlin_MEASURE_THEORY_VOLUME_2:2003}
D.H. Fremlin.
\newblock {\em Measure theory. {V}ol. 2. {B}road foundations}.
\newblock Torres Fremlin, Colchester, 2003.
\newblock Corrected second printing of the 2001 original.

\bibitem{gao_leung_xanthos:2018}
N.~Gao, D.H. Leung, and F.~Xanthos.
\newblock Duality for unbounded order convergence and applications.
\newblock {\em Positivity}, 22(3):711--725, 2018.

\bibitem{gao_troitsky_xanthos:2017}
N.~Gao, V.G. Troitsky, and F.~Xanthos.
\newblock Uo-convergence and its applications to {C}es\`aro means in {B}anach
  lattices.
\newblock {\em Israel J. Math.}, 220(2):649--689, 2017.

\bibitem{kandic_li_troitsky:2018}
M.~Kandi\'{c}, H.~Li, and V.G. Troitsky.
\newblock Unbounded norm topology beyond normed lattices.
\newblock {\em Positivity}, 22(3):745--760, 2018.

\bibitem{kandic_marabeh_troitsky:2017}
M.~Kandi\'{c}, M.A.A. Marabeh, and V.G. Troitsky.
\newblock Unbounded norm topology in {B}anach lattices.
\newblock {\em J. Math. Anal. Appl.}, 451(1):259--279, 2017.

\bibitem{kandic_taylor:2018}
M.~Kandi\'{c} and M.A. Taylor.
\newblock Metrizability of minimal and unbounded topologies.
\newblock {\em J. Math. Anal. Appl.}, 466(1):144--159, 2018.

\bibitem{kelley_namioka_LINEAR_TOPOLOGICAL_SPACES_SECOND_CORRECTED_PRINTING:1976}
J.L. Kelley and I.~Namioka.
\newblock {\em Linear topological spaces}, volume No. 36.
\newblock Springer-Verlag, New York-Heidelberg, 1976.
\newblock Second corrected printing.

\bibitem{lebesgue_LECONS_SUR_LES_SERIES_TRIGONOMETRIQUES:1906}
H.~Lebesgue.
\newblock {\em Lecons sur les s{\'e}ries trigonom{\'e}triques profess{\'e}es au
  {Coll{\`e}ge} de {France}.}
\newblock {Gauthier}-{Villars}, Paris, 1906.

\bibitem{luxemburg_zaanen_RIESZ_SPACES_VOLUME_I:1971}
W.A.J. Luxemburg and A.C. Zaanen.
\newblock {\em Riesz spaces. {V}ol. {I}}.
\newblock North-Holland Publishing Co., Amsterdam-London, 1971.

\bibitem{riesz:1909}
F.~Riesz.
\newblock Sur les suites de fonctions mesurables.
\newblock {\em C. R. Acad. Sci., Paris}, 148:1303--1305, 1909.

\bibitem{rudin_FUNCTIONAL_ANALYSIS_SECOND_EDITION:1991}
W.~Rudin.
\newblock {\em Functional analysis}.
\newblock International Series in Pure and Applied Mathematics. McGraw-Hill,
  Inc., New York, second edition, 1991.

\bibitem{schaefer_wolff_TOPOLOGICAL_VECTOR_SPACES_SECOND_EDITION:1999}
H.H. Schaefer and M.P. Wolff.
\newblock {\em Topological vector spaces}, volume~3 of {\em Graduate Texts in
  Mathematics}.
\newblock Springer-Verlag, New York, second edition, 1999.

\bibitem{taylor:2019}
M.A. Taylor.
\newblock Unbounded topologies and {$uo$}-convergence in locally solid vector
  lattices.
\newblock {\em J. Math. Anal. Appl.}, 472(1):981--1000, 2019.

\bibitem{zaanen_INTRODUCTION_TO_OPERATOR_THEORY_IN_RIESZ_SPACES:1997}
A.C. Zaanen.
\newblock {\em Introduction to operator theory in {R}iesz spaces}.
\newblock Springer-Verlag, Berlin, 1997.

\end{thebibliography}

\end{document}